\providecommand{\U}[1]{\protect\rule{.1in}{.1in}}
\numberwithin{equation}{section}
\newtheorem{theorem}{Theorem}[section]
\newtheorem{lemma}[theorem]{Lemma}
\newtheorem{corollary}[theorem]{Corollary}
\newtheorem{proposition}[theorem]{Proposition}
\newtheorem{remark}[theorem]{Remark}
\newtheorem{definition}[theorem]{Definition}
\def\<{\langle}
\def\>{\rangle}
\def\d{{\rm d}}
\def\E{\mathbb{E}}
\def\N{\mathbb{N}}
\def\P{\mathbb{P}}
\def\R{\mathbb{R}}
\def\W{\mathbb{W}}
\def\Z{\mathbb{Z}}
\def\cA{\mathcal{A}}
\def\cF{\mathcal{F}}
\def\cI{\mathcal{I}}
\def\cM{\mathcal{M}}
\def\cN{\mathcal{N}}
\def\cY{\mathcal{Y}}
\def\sC{\mathscr{C}}
\begin{document}

\title{Large deviation principle for the stationary solutions of stochastic functional differential equations with infinite delay}
\author{Yong Liu, \footnote{Email: liuyong@math.pku.edu.cn} \quad Bin Tang \footnote{Email: tangbin@math.pku.edu.cn, Corresponding author}
\bigskip \\
{\small LMAM, School of Mathematical Sciences, Peking University,} \\
{\small Beijing 100871, China}}

\maketitle

\vspace{-15pt}

\begin{abstract}
 We investigate the large deviation principle (LDP) of the stationary solutions of stochastic functional differential equations (SFDEs) with infinite delay under small random perturbations. First, we demonstrate the existence and uniqueness of the corresponding stationary solutions. Second, by the weak convergence approach, we show the uniform large deviation principle for the solution maps, and then prove the LDP for stationary solutions. Furthermore, we obtain the LDP for invariant measures of SFDEs through the LDP for stationary solutions and the contraction principle.
\end{abstract}

\textbf{Keywords:} Large deviation principle; Stochastic functional differential equations; Infinite delay; Stationary solutions; Invariant measures

\textbf{MSC (2020):} 60F10, 60G10, 34K50, 37A50

\section{Introduction} \label{sec: introduction}

In the real world, dynamic systems usually depend on their historical states. Moreover, uncertainties within these systems have prompted research into stochastic functional differential equations (SFDEs) with delay, especially with infinite delay. In this paper, we consider SFDEs perturbed by small noise with infinite delay starting from the initial segment $\xi \in C_r$ at $t_0 \in \R$:
\begin{equation} \label{eq: truncated SFDEs epsilon}
 \begin{cases}
 \d Y^{\epsilon}(t; t_0, \xi)=b(Y_{t;t_0,\xi}^{\epsilon}) \, \d t+ \sqrt{\epsilon} \sigma(Y_{t;t_0,\xi}^{\epsilon}) \, \d W(t), \quad & \mbox{if} \quad t > t_0; \\
 Y^{\epsilon}(t; t_0, \xi)= \xi(t-t_0), \quad & \mbox{if} \quad t \leq t_0; \\
 Y_{t;t_0,\xi}^{\epsilon} := \left\{ Y^{\epsilon}(t+\tau;t_0,\xi); \, \tau \in (-\infty,0] \right\} \in C_r,
 \end{cases}
\end{equation}
where $\epsilon \in (0,1)$, $W(t)$ is an $m$-dimensional double-side Wiener process, $C_r$ is the phase space defined as \eqref{def: C_r}, $b:C_{r} \rightarrow \R^{d}$ and $\sigma:C_{r} \rightarrow \R^{d\times m}$ are continuous functionals. We will show the large deviation principle (LDP) for the solution maps $\{ Y_{\cdot;t_0,\xi}^{\epsilon}\}_{\epsilon>0}$ uniform over the compact sets of the path space $\sC_r$ (see \eqref{def: path space}). Eq. \eqref{eq: truncated SFDEs epsilon} induces the corresponding dynamic system $(U^{\epsilon},\theta)$ (see \eqref{U(t,theta)}), which is a crude cocycle. Furthermore, we will show the existence, pathwise uniqueness, and LDP for the stationary solutions $\{ \mathcal{Y}^{\epsilon} \}_{\epsilon>0}$ for the crude cocycle $(U^{\epsilon},\theta)$, as defined precisely by \eqref{identity: stationary-solutions} in Definition \ref{def: stationary solution}. It implies the LDP for the invariant measure $\{ \nu^{\epsilon} \}_{\epsilon > 0}$ of Eq. \eqref{eq: truncated SFDEs epsilon} by using the contraction principle.

Research on deterministic and stochastic functional differential equations with infinite delay originated in the 1970s, finding applications in various fields such as biology, epidemiology, mechanics, neural networks, etc., see \cite{Caraballo_Attractors_2007, Hale_Functional_1974, Hino_Functionaldifferential_1991, Kolmanovskii_Stability_1986, Li_Stability_2010, McShane_Stochastic_1969, Ren_Remarks_2008, Rihan_Delay_2021, Wu_Stochastic_2017}. Wu et al. \cite{Wu_Stochastic_2017} show the existence, uniqueness, Markov property, and ergodicity of Eq. \eqref{eq: truncated SFDEs epsilon} under dissipative and local Lipschitz conditions, which we will adopt in our analysis. They also provided the existence and uniqueness of the invariant measure of Markov process $Y_{t;t_0,\xi}^{\epsilon}$ in \cite[Theorem 5.1]{Wu_Stochastic_2017}. Wang et al. \cite{Wang_Stochastic_2022} further extended these results to include conditions with non-Lipschitz coefficients. Notably, the Markov property holds only for the solution map $Y_{t;t_0,\xi}^{\epsilon}$ but not for the solution $Y^{\epsilon}({t;t_0,\xi})$ (refer to \cite[Section 4.2]{Wu_Stochastic_2017}). We therefore regard the solution map as the ``true'' solution and define its path space as $\sC_r$ (see \eqref{def: path space}). The path space $\sC_r $ is a subspace of continuity function space $C(\R;C_r)$ with compact-open topology, and it will be convenient when discussing the stationary solution. Some studies have focused on stationary solution of SFDEs with finite delay \cite{Caraballo_Existence_2007, Jiang_Global_2023}, as well as on stationary solutions SFDEs with infinite delay in the finite dimensional distributional sense \cite{Bakhtin_Existence_2002, Bakhtin_Stationary_2005, Ito_stationary_1964}. Using the remote start method or called pull-back approach (see \cite{Flandoli_Weak_1999, Liu_Representation_2009}), we construct the pathwise unique stationary solution for SFDEs with infinite delay. To our knowledge, this paper is the first to investigate the strong stationary solution of SFDEs with infinite delay (see Definition \ref{def: stationary solution}, Theorem \ref{thm: stationary solution}, and the subsequent discussion). The existence and pathwise uniqueness of the strong stationary solutions shows the ``one force one solution'' principle and is necessary in the proof of the LDP for stationary solutions via the weak convergence approach.

The large deviation principle (LDP) is a fundamental theory that calculates the probability of rare events and provides the exponential tail estimates of some probability distributions. In 1966, Varadhan \cite{Varadhan_Asymptotic_1966} gave a modern formulation of LDP (see Definition \ref{def: large deviation}). 
From the 1960s onward, Freidlin and Wentzell \cite{Freidlin_Random_2012} developed a framework for investigating the long-time behavior of dynamical systems perturbed by a small random noise through LDP. 
Dupuis, Ellis, Bou\'e and Budhiraja et al. \cite{Boue_variational_1998, Budhiraja_variational_2000, Budhiraja_Analysis_2019, Budhiraja_Large_2008, Dupuis_Weak_1997} established the weak convergence approach, which provides another powerful method to explore the LDP through a variational representation. For SFDEs with finite delay, various results on LDP have been established, including small noise LDP \cite{Jin_Large_2022, Mo_Large_2013, Mohammed_Large_2006}, small noise uniform LDP and exit time asymptotics \cite{Lipshutz_Exit_2018}, as well as small time LDP \cite{Yang_small_2023}. For SFDEs with infinite delay, Gopal and Suvinthra \cite{Gopal_Large_2020} found small noise LDP with Lipschitz conditions and driven by both additive and multiplicative noise; Wang et al. \cite{Wang_Large_2024} used the Freidlin-Wentzell's theory to derive small noise LDP for systems with regime-switching and local Lipschitz conditions. Different from the above LDP results in a finite time interval, to explore the asymptotic behavior in the whole time interval, we further establish the small noise {\emph{uniform LDP}} for SFDEs with infinite delay in $ \sC_r $, specifically for the distributions of continuous mapping $ \{ t \mapsto Y^{\epsilon}_{t; t_0, \xi}; \, t \in \mathbb{R} \} $. It is also an intermediate step in proving the LDP for stationary solutions. 

To describe the transitions between stationary behaviors of dynamic systems under small noise, Freidlin \cite{Freidlin_Random_1988} introduced the concept of LDP for invariant measures. By constructing an auxiliary function known as the quasi-potential, Freidlin, Cerrai, Brzeźniak et al. \cite{Brzezniak_Large_2017,Cerrai_Large_2005,Cerrai_Large_2022,Freidlin_Random_1988} demonstrated the LDP for the invariant measures of the reaction-diffusion equation and the 2D Navier-Stokes equation. However, constructing the quasi-potential for SFDEs with infinite delays is particularly challenging. This difficulty arises because the dynamic behaviors of SFDEs depend on all historical states, and their invariant measures are supported in an infinite-dimensional space. Consequently, applying the quasi-potential method to prove the LDP for the invariant measures of SFDEs is quite complex. 

The stationary solution is a crucial topic in the study of SFDEs with infinite delay, highlighting the irreversible impact of non-vanishing historical dependence on long-term stability. Therefore, investigating the LDP for the stationary solution of SFDEs with infinite delay is a natural inquiry, which reveals how these historically dependent perturbations affect long-term stability. Recently, Gao et al. in \cite[Example 4]{Gao_Large_2022} show that the LDP for stationary solutions offers more precise insights into the long-term behavior of dynamical systems compared to the LDP for invariant measures. Using the weak convergence method, Gao et al. \cite{Gao_Large_2022} derived the LDP for stationary solutions of a class of SPDEs, where the stationary solution can be represented as a mild solution and naturally be represented as a Borel measurable map $\mathcal{G}^{\epsilon}(W)$. However, for SFDEs \eqref{eq: truncated SFDEs epsilon}, their stationary solutions cannot generally be represented as mild solutions. To establish the existence of a Borel measurable map $\mathcal{G}^{\epsilon}$ satisfying $\mathcal{G}^{\epsilon}(W)$ is an $\mathcal{F}_{t}$-adapted stationary solution to SFDEs \eqref{eq: truncated SFDEs epsilon}, we need construct the pathwise unique stationary solutions (Theorem \ref{thm: stationary solution}) and demonstrate a modified Yamada--Watanabe--Engelbert Theorem in the whole time interval (Lemma \ref{lem: Yamada-Watanabe}). It need to note that the Bou\'e--Dupuis formula over infinite time interval (see \cite[Theorem 9]{Lehec_Representation_2013} and Lemma \ref{lem: variational representation}) serves as the foundation for applying the weak convergence method in proving the LDP of the stationary solutions.

The LDP for invariant measures derives directly from the LDP for stationary solutions via the contraction principle. The rate functions from both LDPs are equivalent due to the uniqueness of the rate function. This approach not only elucidates the quasi-potential from the perspective of the LDP for stationary solutions but also provides an alternative method for deriving the rate function of invariant measures.

The proof of LDP for stationary solutions of SFDEs \eqref{eq: truncated SFDEs epsilon} is divided as the following steps. Firstly, we solved the preconditions for applying the weak convergence method in Theorem \ref{thm: stationary solution} and Lemma \ref{lem: variational representation}. Secondly, we proved the uniform LDP for SFDEs \eqref{eq: truncated SFDEs epsilon} in $ \sC_r $. Thirdly, using the stochastic Gr\"onwall inequality (Lemma \ref{lem: Stochastic Gronwall}), we obtained the uniform convergence of the controlled solution maps $\mathcal{Y}^{\epsilon,v_{\epsilon}}_{-n,\xi}:=\mathcal{G}_{t_0; \xi}^{\epsilon} \big(W(\cdot)+\frac{1}{\sqrt{\epsilon}} \int_{0}^{\cdot} v_{\epsilon}(s) \, \d s\big) $, see details in Lemma \ref{lem: truncated-controlled-equation} and Lemma \ref{lem: uniform-controlled-equation}. Finally, using the uniform LDP and the uniform convergence, we verified the sufficient conditions given by the weak convergence method and established the LDP for stationary solutions of SFDEs \eqref{eq: truncated SFDEs epsilon}, see Theorem \ref{thm: LDP-stationary solution}.

The remainder of this paper is organized as follows. Section \ref{sec: Preliminary} introduces the necessary notations and assumptions, and presents the main results. Section \ref{sec: SFDE and Stationary solution} demonstrates the existence and uniqueness of strong solutions of SFDEs \eqref{eq: truncated SFDEs epsilon} and the corresponding stationary solutions. Section \ref{sec: Large Deviation} provides the detailed proof of the uniform LDP for the solution maps of the SFDEs \eqref{eq: truncated SFDEs epsilon} and the LDP for stationary solutions. Appendix \ref{sec: analysis tools} presents the primary analysis tools.

\section{Preliminary and main results} \label{sec: Preliminary}


We first introduce some basic notation and assumptions. Let $(\Omega,{\mathcal{F}},\mathbb{P},({\mathcal{F}}_{t})_{t>-\infty})$ be a filtered probability space. The filtration $\cF_t$ is defined by 
\begin{equation*}
 \cF_t := \cap_{s>t} \, \sigma \left( \{W(t_2)-W(t_1): -\infty < t_1<t_2 \leq s \} , \, \cN \right),
\end{equation*}
where $\cN$ is the set of all $\P$-null sets. $W(t)$ is the $m$-dimensional double-side Wiener process on this probability space, whose path lies on the space:
\begin{equation*}
 \W=\Big\{ \theta\in C(\mathbb{R}; \, \R^m); \; \theta(0)=0\Big\},\quad
 \|\theta\|_{\W}:= \sum_{n=1}^{+\infty} \frac{ (\sup_{|t| \leq n}|\theta(t)|) \wedge 1}{2^{n}}.
\end{equation*}

Let $C((-\infty,0];{\R}^{d})$ be the family of continuous functions from $(-\infty,0]$ to $\R^d$. For any given $r>0$, the phase space with the fading memory is defined as follows:
\begin{equation} \label{def: C_r}
 C_{r}:=\left\{\phi\in C((-\infty,0];{\R}^{d}): \; \lim_{\tau \rightarrow - \infty} e^{r \, \tau} \phi(\tau) \; \mbox{exists in} \; \R^d \right\} 
\end{equation}
equipped with the norm $\|\phi\|_{r} = \sup_{-\infty<\tau\leq 0}e^{r \, \tau}|\phi(\tau)|$. $(C_r, \| \cdot \|_{r})$ is a Polish space, cf. \cite{Hino_Functionaldifferential_1991}. Additional properties of space $ C_r$ that we will use are listed in Proposition \ref{prop: cr}. To consider the path of solution maps, we defined the path space $ \sC_r$ as
\begin{equation} 
  \sC_{r}:=\Big\{\Phi \in C(\R;C_r): \, \Phi(t)(\tau) = \Phi(t+\tau)(0), \quad \forall \, t \in \R, \, \forall \, \tau \in (-\infty,0] \Big\} \label{def: path space}
\end{equation}
with the metric defined as $d_{r}(\Phi,\Psi ):= \big( \sum_{n=1}^{+\infty} \frac{\|\Phi(n)-\Psi(n)\|_{r}^2 \wedge 1 }{2^{n} } \big)^{1/2}$ for any $\Phi, \Psi \in \sC_r$, where $C(\R;C_r)$ is the space of all continuous functions from $\R$ to $C_r$. According to Proposition \ref{prop: cr} ii), we have $\sup_{|t| \leq n} \| \Phi(t) \|_{r}^2 \leq e^{4nr} \| \Phi(n) \|_{r}^2$. Then for any $\Phi, \Psi \in \sC_r$, we have
\begin{equation*}
 \sum_{n=1}^{+\infty} \frac{\sup_{|t| \leq n} \| \Phi(t)-\Psi(t) \|_{r}^2 \wedge 1 }{ 2^{n} e^{4nr} } \leq |d_{r}(\Phi,\Psi)|^2 \leq \sum_{n=1}^{+\infty} \frac{\sup_{|t| \leq n} \| \Phi(t) -\Psi(t)\|_{r}^2 \wedge 1 }{ 2^{n} }.
\end{equation*}
Thus, $\sC_r$ equipped with metric $d_r$ is a closed subspace of $C(\R;C_r)$ with compact-open topology. Since $C_r$ is a Polish space, $\sC_{r}$ with metric $d_r$ is also a Polish space. For any $p \geq 1$, we define $L^{p}(\Omega,\sC_r)$ as the space of all random functions lie on $\sC_r$ with the metric $ |\E |d_{r} (\cdot,\cdot)|^{p}|^{1/p} $.

We denote $\mathcal{M}_{0}$ as the set of probability measures on $(-\infty,0]$, namely, for any $\mu\in{\cM}_{0}$, $\int_{-\infty}^{0}\mu(\d \tau)=1.$ For any $\kappa>0$, let us further define the measure space $\cM_{\kappa}$ as follows: 
\begin{equation} \label{def: mu}
 {\cM}_{\kappa}:=\Big\{\mu\in{\cM}_{0};\, \mu^{(\kappa)}:=\int_{-\infty}^{0} e^{-\kappa\tau}\mu(\d \tau)<\infty\Big\}.
\end{equation}
To construct stationary solutions and establish the LDP for SFDEs \eqref{eq: truncated SFDEs epsilon}, we suppose continuous functionals $b:C_{r} \rightarrow \R^{d}$ and $\sigma:C_{r} \rightarrow \R^{d\times m}$ satisfy the following assumptions: 
\begin{enumerate}[(H1)] 
 \item (The local Lipschitz condition). For any $k>0$, there exists a constant $c_{k}$ such that
 \begin{equation*}
  |b(\varphi)-b(\phi)| \leq c_{k}\|\varphi-\phi\|_{r} 
 \end{equation*}
 for any $\varphi,\phi\in C_{r}$ and $\|\varphi\|_{r} \lor \|\phi\|_{r}\leq k$. 
 
 \item There exist constants $\lambda_{1},\lambda_{2}\geq 0$ and $\mu_{1} \in{\mathcal{M}}_{2r}$ such that for any $\varphi, \phi \in C_r$,
 \begin{equation*}
  \big(\varphi(0)-\phi(0) \big)^{\prime} \big(b(\varphi)-b(\phi) \big) \leq-\lambda_{1}|\varphi(0)-\phi(0)|^{2}+\lambda_{2}\int_{-\infty}^{0}|\varphi(\tau)-\phi(\tau)|^{2} \mu_{1}(\d \tau) .
 \end{equation*}

 \item There exist constants $\lambda_{3}\geq 0$ and $\mu_{2} \in{\mathcal{M}}_{2r}$ such that for any $\varphi, \phi \in C_r$,
 \begin{equation*}
  \big|\sigma(\varphi)-\sigma(\phi) \big|^2 \leq \lambda_{3}\int_{-\infty}^{0}|\varphi(\tau)-\phi(\tau)|^{2} \mu_{2}(\d \tau) .
 \end{equation*}
\end{enumerate}
The assumption (H3) implies that $\sigma$ is global Lipschitz continuous, cf. \cite[Remark 3.3]{Wu_Stochastic_2017}. We split the condition (H2) given in \cite{Wu_Stochastic_2017} as the assumptions (H2) and (H3) above. The combined assumptions (H1) and (H3) above is stronger than (H1) given in \cite{Wu_Stochastic_2017}. Thus, assumptions (H1), (H2) and (H3) is equivalence the assumptions (H1) and (H2) given in \cite{Wu_Stochastic_2017}. For the existence of the stationary solutions, we will further assume $2\lambda_{1}>2\lambda_{2}\mu_{1}^{(2r)}+\lambda_{3}\mu_{2}^{(2r)}$.

Under the above assumptions, Wu et al. \cite{Wu_Stochastic_2017} give the existence and pathwise uniqueness of the global solution $Y^{\epsilon}(t;t_0,\xi)$ and solution maps $Y^{\epsilon}_{t;t_0,\xi}$. In Lemma \ref{lem: truncated-measurable-map}, we further establish the existence of Borel measurable map $\mathcal{G}^{\epsilon}_{t_0}: \W \times C_r \rightarrow \sC_r$ such that $\mathcal{G}^{\epsilon}_{t_0}(W,\xi) (t,\omega)$ is $\cF_{t}$-adapted and satisfies Eq. \eqref{eq: truncated SFDEs epsilon} with initial data $\xi \in C_r$. Therefore, for any fixed $t_0 \in \R$, we can define the demapping $U_{t_0}^{\epsilon}$ from $\R_{+} \times C_r \times \Omega$ to $C_r$ as 
\begin{align}
 U_{t_0}^{\epsilon}: (t,\xi,\omega) \mapsto \mathcal{G}_{t_0}^{\epsilon}(W,\xi)(t+t_0, \omega) & =Y^{\epsilon}_{t+t_0;t_0,\xi}(\cdot,\omega) \nonumber \\
 &  =\{ Y^{\epsilon}(t+t_0+\tau; \, 0,\xi)(\omega); \; \tau \in (-\infty,0] \}. \label{U(t,theta)}
\end{align}

The standard $\mathbb{P}$-preserving ergodic Wiener shift $\theta:\mathbb{R}\times \Omega\rightarrow \Omega$ is defined by $\theta(t,\omega(s)):=\omega(t+s)-\omega(s)$ for any $s,t\in\mathbb{R}$. Moreover, denote $\theta(t,\omega(0))$ as $\theta(t,\omega)$. Using the stationary increments of the Wiener process, it holds that for any $t_0 \in \R$, $\P$-a.s.
\begin{equation} \label{ident: trans}
 U_{t_0}^{\epsilon}\left(t,\xi, \omega\right)=Y^{\epsilon}_{t+t_0;t_0,\xi}(\cdot,\omega)=Y^{\epsilon}_{t;0,\xi}(\cdot,\theta(t_0,\omega))=U_0^{\epsilon}\left(t,\xi, \theta\left(t_0, \omega\right)\right), \quad \forall \, \xi \in C_r,  \, \forall \, {t \geq 0}. 
\end{equation} 
For any $t_0 \in \R$, since the solution map $Y^{\epsilon}_{t;t_0,\xi}$ is a $C_r$-valued strong homogeneous Markov process, as proved in \cite[Theorem 4.2]{Wu_Stochastic_2017}, we obtain that for any $s,t \geq 0$, $\P$-a.s.
\begin{equation*}
  U^{\epsilon}_{t_0}(t+s,\xi,\omega) = U_{t_0+s}^{\epsilon}(t,U^{\epsilon}_{t_0}(s,\xi,\omega),\omega)= U_{t_0}^{\epsilon}(t,U^{\epsilon}_{t_0}(s,\xi,\omega),\theta(s,\omega)).
\end{equation*}
Then for any $t_0 \in \R$, the random dynamical system $(U^{\epsilon}_{t_0},\theta)$ is a crude cocycle defined in \cite[Defintion 1.1.2]{Mohammed_stable_2008}, which means it satisfies the following conditions:
\begin{itemize}
 \item for any $s,t \geq 0$, $\mathbb{P}$-a.s., $U_{t_0}^{\epsilon} \left(t+s, \cdot, \omega\right)=U_{t_0}^{\epsilon} \left(t, U_{t_0}^{\epsilon} \left(s, \cdot, \omega\right), \theta\left(s, \omega\right)\right)$; 
 \item $U_{t_0}^{\epsilon} (0, \xi, \omega)=\xi$ for all $\xi \in C_r$, $\omega \in \Omega$. 
\end{itemize}
We call $(U^{\epsilon},\theta)$ is a crude cocycle, if the above equality \eqref{ident: trans} holds and for every $t_0 \in \R$, the random system $(U^{\epsilon}_{t_0},\theta)$ is a crude cocycle. This definition removes the dependence on the starting time and allows stationary solutions to be defined on the entire real line. 

Stationary solution is a fundamental concept in studying the long time behavior of stochastic dynamical systems (see \cite{Bakhtin_Existence_2002, Bakhtin_Stationary_2005, Ito_stationary_1964, Mohammed_stable_2008}). For crude cocycle $(U^{\epsilon},\theta)$, it is defined as follows. 
\begin{definition} \label{def: stationary solution}
 An $\mathcal{F}_t$ progressively measurable process $\mathcal{Y}^{\epsilon}: \R \times \Omega \rightarrow C_r$ is said to be a stationary solution for the crude cocycle $(U^{\epsilon}, \theta)$ if for any fixed $s \in \R$, $\P$-a.s.
	\begin{equation} \label{identity: stationary-solutions}
		U_0^{\epsilon}(t, \mathcal{Y}^{\epsilon}(s,\omega), \theta(s, \omega))=\mathcal{Y}^{\epsilon}(t+s,\omega)=\mathcal{Y}^{\epsilon}(t,\theta(s, \omega)), \quad \forall t \, \geq 0,
	\end{equation}
  and $\cY^{\epsilon}(\cdot,\omega): \R \rightarrow C_r$ lies on $\sC_r$ for any $\omega \in \Omega$.
\end{definition}
\begin{remark}
  For any $s \in \R$, the equality \eqref{ident: trans} and the first equality in \eqref{identity: stationary-solutions} show
  \begin{equation*}
    U_{s}^{\epsilon}(t, \mathcal{Y}^{\epsilon}(s,\omega), \omega)=U_0^{\epsilon}(t, \mathcal{Y}^{\epsilon}(s,\omega), \theta(s, \omega))=\mathcal{Y}^{\epsilon}(t+s,\omega), \quad \forall \, t \geq 0,
  \end{equation*}
which means that $\mathcal{Y}^{\epsilon}(t,\omega)$ is the solution of Eq. \eqref{eq: truncated SFDEs epsilon} with initial data $\mathcal{Y}^{\epsilon}(s, \omega)$ and any starting time $s \in \R$. The second equality of \eqref{identity: stationary-solutions} shows that the value of stationary solution at different times differ only by a Wiener shift in the Wiener space. 
\end{remark}

Let $ \nu^{\epsilon}:= \P \circ (\mathcal{Y}^{\epsilon}(0))^{-1}$. From the definition of stationary solution, we know that 
$$\nu^{\epsilon}(A)=\int_{C_r} \P (Y^{\epsilon}_{t;0,\phi} \in A) \, \nu^{\epsilon}(\d \phi), $$ 
for any Borel measurable set $A \in \mathcal{B}(C_r)$ and any $t\geq 0$. Thus, $\nu^{\epsilon}$ is an invariant measure of the solution map of Eq. \eqref{eq: truncated SFDEs epsilon} with the uniqueness established by \cite[Theorem 5.1]{Wu_Stochastic_2017}. Using the remote start method and the modified Yamada--Watanabe--Engelbert Theorem (Lemma \ref{lem: Yamada-Watanabe}), we obtain the strong stationary solution for the crude cocycle $(U^{\epsilon},\theta)$ as follows.
\begin{theorem} \label{thm: stationary solution}
 For any $\epsilon>0$, there is a pathwise unique stationary solution $\cY^{\epsilon}$ for crude cocycle $(U^{\epsilon},\theta)$ in $L^{\infty}(\R;L^2(\Omega,C_r))$. For any $t \in \R$ and $\tau \in (-\infty,0]$, $\cY^{\epsilon}(t)$ $\P$-a.s. satisfies
 \begin{equation} \label{eq: stationary solution}
  \begin{cases}
   \cY^{\epsilon}(t)(0) = \cY^{\epsilon}(t+\tau)(0)+\int_{t+\tau}^{t} b(\cY^{\epsilon}(s)) \, \d s+ \sqrt{\epsilon} \int_{t+\tau}^{t} \sigma(\cY^{\epsilon}(s)) \, \d W(s); \\
   \cY^{\epsilon}(t)(\tau) = \cY^{\epsilon}(t+\tau)(0),
  \end{cases}
 \end{equation}
 and has a uniform bound with time $t$ specifically $\sup_{t \in \R} \E \| \cY^{\epsilon}(t) \|_{r}^2 < +\infty$.
 Conversely, the uniform bound solution to \eqref{eq: stationary solution} is indeed the stationary solution for cocycle $(U^{\epsilon}, \theta)$. Furthermore, there exists a Borel measurable map $\mathcal{G}^{\epsilon}: \W \rightarrow \sC_r$ such that $\mathcal{G}^{\epsilon}(W)(\cdot)$ is an $\mathcal{F}_{t}$-adapted uniform bound solution to \eqref{eq: stationary solution}, called the strong stationary solution for the crude cocycle $(U^{\epsilon},\theta)$.
\end{theorem}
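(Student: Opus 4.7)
The plan is to combine the remote-start (pull-back) method with the modified Yamada--Watanabe--Engelbert theorem (Lemma~\ref{lem: Yamada–Watanabe}). First I would fix some initial datum $\xi \in C_r$ (say $\xi \equiv 0$) and, for each $n \in \mathbb{N}$, define
\[
    \mathcal{Y}_n^{\epsilon}(t, \omega) := U^{\epsilon}\bigl(t+n, \, \xi, \, \theta(-n, \omega)\bigr), \qquad t \geq -n,
\]
which, by the cocycle property and Lemma~\ref{lem: truncated-measurable-map}, is the $\mathcal{F}_t$-adapted solution map of \eqref{eq: truncated SFDEs epsilon} started at time $-n$ from $\xi$. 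Using (H2), (H3), and the stochastic Gr\"onwall inequality (Lemma~\ref{lem: Stochastic Gronwall}), I would show that $\{\mathcal{Y}_n^{\epsilon}(t)\}_n$ is Cauchy in $L^2(\Omega; C_r)$ for each fixed $t$. The key input is the strict dissipativity $2\lambda_1 > 2\lambda_2 \mu_1^{(2r)} + \lambda_3 \mu_2^{(2r)}$ combined with the fading-memory property $\mu_i \in \mathcal{M}_{2r}$: applying It\^o's formula to $|\mathcal{Y}_n^{\epsilon}(t)(0) - \mathcal{Y}_m^{\epsilon}(t)(0)|^2$ (for $n \geq m$) and expressing the $C_r$-norm through the $e^{r\tau}$-weighted supremum, the memory integral against $\mu_1, \mu_2$ gets absorbed into the dissipative leading term, yielding exponential decay back to the earlier starting time $-m$. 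The limit $\cY^{\epsilon}(t) := \lim_n \mathcal{Y}_n^{\epsilon}(t)$ is then well-defined in $L^2(\Omega; C_r)$, is $\mathcal{F}_t$-progressively measurable, satisfies \eqref{eq: stationary solution}, and obeys $\sup_{t\in\R} \mathbb{E}\|\cY^{\epsilon}(t)\|_r^2 < +\infty$.

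Next I would verify the stationarity identity \eqref{identity: stationary-solutions} directly from the cocycle identity: for fixed $s \in \R$ and each $n$,
\[
    U^{\epsilon}\bigl(t+s+n, \, \xi, \, \theta(-n, \omega)\bigr) = U^{\epsilon}\bigl(t, \, U^{\epsilon}(s+n, \xi, \theta(-n, \omega)), \, \theta(s, \omega)\bigr),
\]
and letting $n \to \infty$ on both sides produces the two equalities in \eqref{identity: stationary-solutions}, after noting that the cocycle shift $\theta(-n, \theta(s, \omega)) = \theta(s-n, \omega)$ makes the right-hand limit match the pull-back evaluated at $\theta(s, \omega)$. For the converse direction, given a uniform-bound solution $\tilde{\cY}^{\epsilon}$ of \eqref{eq: stationary solution}, pathwise uniqueness of \eqref{eq: truncated SFDEs epsilon} on each half-line $[s, +\infty)$ with initial segment $\tilde{\cY}^{\epsilon}(s)$ yields $U^{\epsilon}(t, \tilde{\cY}^{\epsilon}(s), \theta(s, \omega)) = \tilde{\cY}^{\epsilon}(t+s)$, which is precisely the cocycle-stationarity condition.

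For pathwise uniqueness, I would take two uniform-bound solutions $\cY^{\epsilon,(1)}, \cY^{\epsilon,(2)}$ of \eqref{eq: stationary solution}, run the same dissipativity plus stochastic Gr\"onwall estimate on their difference, but now propagated from any fixed time $T$ backward to $T-n$; the uniform $L^2$-bound $\sup_t \mathbb{E}\|\cY^{\epsilon,(i)}(t)\|_r^2 < +\infty$ is exactly what makes the boundary contribution at time $T-n$ vanish as $n \to \infty$, forcing $\cY^{\epsilon,(1)} = \cY^{\epsilon,(2)}$. Finally, pathwise uniqueness together with the existence in law of each approximation $\mathcal{Y}_n^{\epsilon} = \mathcal{G}_{-n}^{\epsilon}(W, \xi)$ from Lemma~\ref{lem: truncated-measurable-map} (whose laws converge by Step~1) allows the modified Yamada--Watanabe--Engelbert theorem (Lemma~\ref{lem: Yamada–Watanabe}) to produce the Borel measurable map $\mathcal{G}^{\epsilon}: \W \to \sC_r$ with $\mathcal{G}^{\epsilon}(W) = \cY^{\epsilon}$ $\P$-a.s.

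The hard part is the Cauchy estimate in Step~1: because $\mathcal{Y}_n^{\epsilon}$ and $\mathcal{Y}_m^{\epsilon}$ start at \emph{different} times, they have no common initial segment at time $-m$, so a naive deterministic Gr\"onwall argument would produce a bound blowing up with $t$. The remedy is the stochastic Gr\"onwall inequality, which, mirroring its use in Lemma~\ref{lem: truncated-controlled-equation}, delivers a time-uniform $L^{3/2}(\Omega; C_r)$ contraction provided one simultaneously exploits the strict dissipativity and the $e^{-2r\tau}$-integrability built into $\mu_1, \mu_2 \in \mathcal{M}_{2r}$; upgrading this to $L^2(\Omega; C_r)$ convergence of the pull-back sequence is then straightforward by interpolation with the uniform $L^2$-bound furnished by the same estimate applied to a single solution against $0$.
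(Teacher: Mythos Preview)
Your overall plan---pull-back construction, Cauchy estimate, verification of \eqref{identity: stationary-solutions}, pathwise uniqueness, then Lemma~\ref{lem: Yamada–Watanabe}---matches the paper's proof. The difference lies in how you handle the Cauchy estimate, and your diagnosis of it as ``the hard part'' is off.

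The paper does \emph{not} use the stochastic Gr\"onwall inequality here. Instead, for $m>n$, it applies the strong Markov property to rewrite $Y^{\epsilon}_{t;-m,\xi} = Y^{\epsilon}_{t;-n,\,Y^{\epsilon}_{-n;-m,\xi}}$, so the comparison becomes one between two solutions on $[-n,\infty)$ with the \emph{same} starting time and different initial segments. Then the already-established $L^2$ contraction \eqref{eq: cauchy-truncated SFDEs solution map} from Lemma~\ref{lem: truncated-solution-map} (proved by Wu et al.\ via classical Gr\"onwall after taking expectations) gives
\[
    \E\big\|Y^{\epsilon}_{t;-m,\xi}-Y^{\epsilon}_{t;-n,\xi}\big\|_r^2 \leq C_{6,\epsilon}\,\E\big\|Y^{\epsilon}_{-n;-m,\xi}-\xi\big\|_r^2\, e^{-\lambda_{\epsilon_0}(n+t)},
\]
and the uniform $L^2$ bound from Lemma~\ref{lem: truncated-solution-map} controls the right-hand side. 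This is a direct $L^2$ Cauchy estimate; no $L^{3/2}$ step and no interpolation are needed. Your worry that ``a naive deterministic Gr\"onwall argument would produce a bound blowing up with $t$'' is unfounded for the uncontrolled equation: the strict dissipativity makes the Gr\"onwall constant negative, so classical Gr\"onwall after taking expectations already yields exponential decay. The stochastic Gr\"onwall inequality is reserved in the paper for the \emph{controlled} equation (Lemma~\ref{lem: truncated-controlled-equation}), where the random factor $|v(s)|^2$ obstructs taking expectations before applying Gr\"onwall. (Incidentally, your proposed interpolation---$L^{3/2}$ convergence plus uniform $L^2$ bound implies $L^2$ convergence---is false in general, so that route would need repair anyway.)

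One smaller gap: in your converse direction you only verify the flow identity $U^{\epsilon}(t,\tilde{\cY}^{\epsilon}(s),\theta(s,\omega)) = \tilde{\cY}^{\epsilon}(t+s,\omega)$, but Definition~\ref{def: stationary solution} also requires the shift-invariance $\tilde{\cY}^{\epsilon}(t+s,\omega)=\tilde{\cY}^{\epsilon}(t,\theta(s,\omega))$. The paper closes this by noting that the pathwise-uniqueness argument uses only the flow identity plus the uniform bound, so any such $\tilde{\cY}^{\epsilon}$ must coincide with the constructed $\cY^{\epsilon}$ and thereby inherit the full stationarity.
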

As far as we know, the stationary solution for SDEs and SFDEs was first introduced by It\^o and Nisio in \cite{Ito_stationary_1964} to explore the stochastic differential equations, including SFDEs. Using the notations in our paper, the stationary solution defined in \cite{Ito_stationary_1964} refers to the solution $\cY^{\epsilon}(t)(0)$ such that the finite-dimensional distribution of $(\cY^{\epsilon}(\cdot)(0), \d W)$ is invariant under the time shift. This implies that the stationary solution in \cite{Ito_stationary_1964} is a weak solution. Later, Bakhtin and Mattingly \cite{Bakhtin_Existence_2002, Bakhtin_Stationary_2005} extended these results. It is evident that the stationary solutions defined in Definition \ref{def: stationary solution} also qualify as the stationary solutions in \cite{Ito_stationary_1964}. Furthermore, the stationary solution of SFDE considered in the present paper is a strong solution and follows ``one force one solution'' from the perspective of random dynamic system. In particular, the strong stationary solution shown in Theorem \ref{thm: stationary solution} is essential for applying the weak convergence method of LDP.

Next, we introduce some basic concepts of LDP and uniform LDP. Let $\mathcal{E}_0$ and $\mathcal{E}$ be Polish spaces. Firstly, we define rate functions as follows, referring to \cite[Definition 1,4]{Budhiraja_Large_2008}.

\begin{definition}[Rate Function]
 A function $\cI: \mathcal{E} \rightarrow [0,+\infty]$ is called a rate function, if $\cI$ is lower semicontinuous. A rate function $\cI$ is called a good rate function if for each $c<+\infty$, the level set $\{\Phi \in \mathcal{E} : \cI(\Phi)\leq c\}$ is compact. For any Borel measurable set $A \in \mathcal{B}(\mathcal{E})$, we defined $\cI(A) :=\inf_{\Phi \in A} \cI(\Phi)$.
 A family of rate functions $\{\cI_{\xi}\}$ on $\sC_r$, parametrized by $\xi \in \mathcal{E}_0$, is said to have compact level sets on compacts if for all compact subsets $K \subset \mathcal{E}_0$ and each $M<+\infty$, $\Lambda_{M,K}= \cup_{\xi \in K} \{\Phi \in \mathcal{E}; \, \cI_{\xi}(\Phi) \leq M\}$ is a compact subset of $\mathcal{E}$.
\end{definition}
In this paper, we will set $\mathcal{E}_0 = C_r$ and $\mathcal{E} = \sC_r$ or $C_r$. We present the definition of LDP for a sequence of distributions on $\sC_r$; the standard definition can be found in \cite[Definition 2]{Budhiraja_Large_2008}.

\begin{definition}[Large Deviation Principle] \label{def: large deviation}
 We say a sequence $\{ X^{\epsilon}\}_{\epsilon>0} \subset \sC_r$ satisfies the large deviation principle with rate function $\cI$, if the following two conditions hold:
 \begin{itemize}
  \item [1.] Large deviation upper bound. For each closed subset $F$ of $\sC_r$,
   \begin{equation*}
    \limsup_{\epsilon \rightarrow 0} \epsilon\,\log \P(X^{\epsilon}\in F)\leq - \cI(F).
   \end{equation*}
  \item [2.] Large deviation lower bound. For each open subset $G$ of $\sC_r$,
   \begin{equation*}
    \liminf_{\epsilon \rightarrow 0} \epsilon\,\log \P(X^{\epsilon}\in G) \geq -\cI(G).
   \end{equation*}
 \end{itemize}
\end{definition}

Freidlin and Wentzell define the uniform LDP at the end of Section 3.3 of \cite{Freidlin_Random_2012}. This concept is applied to a sequence of distributions on $\sC_r$ with indexed space $C_r$ as follows.
\begin{definition}[Freidlin-Wentzell uniform LDP]
 Let $\mathcal{K}$ be the family of all compact subset of $C_r$. We say a sequence $\{ X^{\epsilon}_{\xi}\}_{\epsilon>0} \subset \sC_r$ indexed by $\xi \in C_r$ satisfies the uniform large deviation principle with rate function $\cI_{\xi}$, uniformly in $\mathcal{K}$ if the following two conditions hold:
 \begin{itemize}
  \item [1.] Upper bound. For any $K \in \mathcal{K}$, $\delta>0$ and $s_0>0$,
   \begin{equation*}
    \limsup_{\epsilon \rightarrow 0} \sup_{\xi \in K} \sup_{s \leq s_0} \Big\{ \epsilon \log \P \big( \inf_{ \{\Phi \in \sC_r; \, \cI_{\xi}(\Phi) \leq s\} } d_{r}(X^{\epsilon}_{\xi},\Phi) \geq \delta \big) + s \Big\} \leq 0.
   \end{equation*}
  \item [2.] Lower bound. For any $K \in \mathcal{K}$, $\delta>0$ and $s_0>0$,
   \begin{equation*}
    \liminf_{\epsilon \rightarrow 0} \inf_{\xi \in K} \inf_{ \{\Phi \in \sC_r; \, \cI_{\xi}(\Phi) \leq s_0\}} \Big\{ \epsilon\,\log \P \big( d_{r}(X^{\epsilon}_{\xi},\Phi) <\delta \big) + \cI_{\xi}(\Phi) \Big\} \geq 0.
   \end{equation*}
 \end{itemize}
\end{definition}

The LDP is well known to be equivalent to the Laplace principle, see \cite[Theorem 1.2.1, Theorem 1.2.3]{Dupuis_Weak_1997}. Salins et al. in \cite[Theorem 4.3]{Salins_Uniform_2019} show that the uniform LDP over compact sets is equivalent to the uniform Laplace principle. Given these equivalences, the rest of this work will investigate the Laplace principle and the uniform Laplace principle. They are stated as follows.
\begin{definition} [Laplace Principle] \label{def: Laplace principle}
We say the sequence $\{X^{\epsilon}\}_{\epsilon>0} \subset \sC_r$ satisfies the Laplace principle with rate function $\cI$, if for all bounded measurable functions $h: \sC_r \rightarrow \R$,
\begin{equation*}
 \lim_{\epsilon \rightarrow 0}\epsilon \log{\E}\Big[\exp\big\{-{\frac{1}{\epsilon}}h(X^{\epsilon})\big\} \Big]=-\inf_{ \Phi \in{ \sC_r }} \big\{h(\Phi)+\cI(\Phi)\big\}. 
\end{equation*}
\end{definition}
\begin{definition} [Uniform Laplace Principle] \label{def: uniform LP}
 Let $\{\cI_{\xi}\}$ be a family of rate functions on $\sC_r$ indexed by $\xi \in C_r$, and assume that this family has compact level sets on compacts. The family $\{ X^{\epsilon}_{\xi}\}_{\epsilon>0}$ is said to satisfy the uniform Laplace principle with rate function $\cI_{\xi}$, uniformly on compacts, if for all compact subsets $K \subset C_r$ and all bounded continuous functions $h:\sC_r \rightarrow \R$,
 \begin{equation*}
  \lim_{\epsilon \rightarrow 0} \sup_{\xi \in K} \Big| \epsilon \log{\E}\Big[\exp\big\{-{\frac{1}{\epsilon}}h(X^{\epsilon}_{\xi})\big\} \Big] + \inf_{ \Phi \in{ \sC_r }} \big\{h(\Phi)+\cI_{\xi}(\Phi)\big\} \Big|=0. 
 \end{equation*}
\end{definition}

To give the rate functions, we need to consider the skeleton equation as follows
\begin{equation} \label{eq: skeleton equation}
 Y^{0,v}_{t; t_0, \xi}(0)= \begin{cases}
 \xi(0)+\int_{t_0}^{t} b(Y^{0,v}_{s;t_0,\xi}) \, \d s + \int_{t_0}^{t} \sigma(Y^{0,v}_{s;t_0,\xi}) \cdot v(s) \, \d s, \quad & \mbox{if} \quad t \geq t_0; \\
 \xi(t-t_0), \quad & \mbox{if} \quad t < t_0,
 \end{cases}
\end{equation}
and $Y^{0,v}_{t;t_0,\xi}(\tau) =Y^{0,v}_{t+\tau;t_0,\xi}(0)$ for any $\tau \leq 0$, where $t_0 \in \R$, $\xi \in C_r$ and $v \in L^2(\R;\R^m)$. Lemma \ref{lem: well-posedness-skeleton-equation} gives the well-posedness of Eq. \eqref{eq: skeleton equation} and shows that solution map $\{Y^{0,v}_{\cdot; -n, \xi}\}_{n \in \N}$ converge to $\mathcal{Y}^{0,v,\ast}\in \sC_r$ as $n \rightarrow +\infty$. The limit $\mathcal{Y}^{0,v,\ast}$ satisfies that for any $\tau \leq 0$ and $t_2>t_1$, 
\begin{equation} \label{eq: skeleton equation 0}
 \mathcal{Y}^{0,v,\ast}(t_2)(\tau) = \mathcal{Y}^{0,v,\ast}(t_1)(\tau) + \int_{t_1+\tau}^{t_2+\tau} b(\mathcal{Y}^{0,v,\ast}(s) ) \, \d s + \int_{t_1+\tau}^{t_2+\tau} \sigma(\mathcal{Y}^{0,v,\ast}(s)) \cdot v(s) \, \d s.
\end{equation} 
For each $ \frac{\d}{\d t} V \in L^{2}(\R;\R^m)$ and $v=\frac{\d}{\d t} V$, define $\mathcal{G}^{0}_{t_0,\xi} (V)$ and $\mathcal{G}^{0}_{-\infty} (V)$ as
\begin{equation*}
 \mathcal{G}^{0}_{t_0,\xi} (V)(\cdot):= Y^{0,v}_{\cdot;t_0,\xi}; \quad \mathcal{G}^{0}_{-\infty} (V)(\cdot):= \mathcal{Y}^{0,v,\ast}(\cdot).
\end{equation*}
For each $\Phi \in \sC_r$, define rate functions $\cI_{t_0,\xi}(\Phi)$ and $\cI_{-\infty}(\Phi)$ as
\begin{align*}
 \cI_{t_0,\xi} (\Phi) &:= \inf_{\big\{ v \in L^{2}(\R ; \R^m); \; \Phi ={\mathcal G}^{0}_{t_0,\xi} (\int_{0}^{\cdot} v(s) \d s) \big\}} \Big\{\frac{1}{2} \int_{-\infty}^{+\infty} |v(s)|^{2} \d s \Big\}; \\
 \cI_{-\infty}(\Phi) & := \inf_{\big\{ v \in L^{2}(\R ; \R^m); \; \Phi ={\mathcal G}^{0}_{-\infty} (\int_{0}^{\cdot} v(s) \d s) \big\}} \Big\{\frac{1}{2} \int_{-\infty}^{+\infty} |v(s)|^{2} \d s \Big\},
\end{align*}
where the infimum over the empty set is taken to be $+\infty$. In Lemma \ref{lem: proposition-skeleton-equation-1}, we will show that $\cI_{t_0,\xi}$ and $\cI_{-\infty}$ are good rate functions. In Lemma \ref{lem: proposition-skeleton-equation-2}, we will further prove that $\cI_{t_0,\xi}$ indexed by $\xi \in C_r$ have compact level sets on compacts.

With the above notion, we state our main results: the uniform LDP for solution maps $\{ Y_{\cdot;t_0,\xi}^{\epsilon}\}_{\epsilon>0}$ of Eq. \eqref{eq: truncated SFDEs epsilon} and the LDP for stationary solutions $ \{ \mathcal{Y}^{\epsilon} \}_{\epsilon>0}$.
\begin{theorem} \label{thm: uniform-LDP}
 For each $t_0 \in \R$, solution maps $\{ Y_{\cdot;t_0,\xi}^{\epsilon}\}_{\epsilon>0}$ of Eq. \eqref{eq: truncated SFDEs epsilon} satisfies the uniform Laplace principle with rate functions $\{ \cI_{t_0,\xi}: \, \xi \in C_r \}$ that have compact level sets on compacts.
\end{theorem}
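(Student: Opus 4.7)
The plan is to apply the weak convergence approach to uniform large deviations in the framework of Budhiraja--Dupuis and Salins \cite{Budhiraja_Analysis_2019, Salins_Uniform_2019}. Since the uniform LDP on compacts is equivalent to the uniform Laplace principle, and the rate functions $\cI_{t_0,\xi}$ already have compact level sets on compacts by Lemma \ref{lem: proposition-skeleton-equation-2}, it suffices to verify the standard weak-convergence sufficient condition on the controlled processes. First I would invoke the variational representation from Lemma \ref{lem: variational representation} to rewrite
\begin{equation*}
    -\epsilon \log \E\Big[ \exp\big(-\tfrac{1}{\epsilon} h(Y^{\epsilon}_{\cdot;t_0,\xi})\big)\Big] = \inf_{v} \E\Big[\tfrac{1}{2}\int_{\R}|v(s)|^{2}\,\d s + h\big(Y^{\epsilon,v}_{\cdot;t_0,\xi}\big)\Big],
\end{equation*}
where the infimum is over $\cF_t$-progressive $v \in L^{2}(\R;\R^{m})$ and $Y^{\epsilon,v}_{\cdot;t_0,\xi}$ is the \emph{controlled solution map}, i.e.\ the strong solution of \eqref{eq: truncated SFDEs epsilon} with the additional drift $\sigma(\cdot)\,v(\cdot)$ produced by the Girsanov shift $W \mapsto W + \epsilon^{-1/2}\int_{0}^{\cdot}v\,\d s$.

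The condition to check is then: for every $N<+\infty$, every compact $K\subset C_r$, and any sequences $\xi^{\epsilon}\to\xi$ in $K$ and $\cF_t$-progressive $v^{\epsilon}$ with $\|v^{\epsilon}\|_{L^{2}}^{2}\le N$ $\P$-a.s., one has $Y^{\epsilon,v^{\epsilon}}_{\cdot;t_0,\xi^{\epsilon}}\Rightarrow Y^{0,v}_{\cdot;t_0,\xi}$ in $\sC_r$ whenever $v^{\epsilon}\Rightarrow v$ weakly in $L^{2}$. I would establish this by combining the uniform-in-$T$ a priori bound $\sup_{T\ge t_0}\E\|Y^{\epsilon,v^{\epsilon}}_{T;t_0,\xi^{\epsilon}}\|_{r}^{3/2}<+\infty$ provided by Lemma \ref{lem: truncated-controlled-equation} (a stochastic Gr\"onwall argument) with standard tightness criteria on each $C([t_0,n];C_r)$. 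The stochastic integral is negligible thanks to the $\sqrt{\epsilon}$ prefactor and assumption (H3); the controlled drift $\int \sigma(Y^{\epsilon,v^{\epsilon}}_{s})\cdot v^{\epsilon}(s)\,\d s$ is passed to the limit through a Skorokhod representation and the continuity of $\sigma$; and any subsequential weak limit is identified with the unique solution $Y^{0,v}_{\cdot;t_0,\xi}$ of \eqref{eq: skeleton equation} through Lemma \ref{lem: well-posedness-skeleton-equation}. Weighting the finite-time tightness on each $[-n,n]$ by $2^{-n}$ as in the definition of $\vertiii{\cdot}_r$ then delivers tightness directly in $\sC_r$.

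The hard part will be obtaining compactness in $\sC_r$ uniformly in the starting datum $\xi^{\epsilon}\in K$, the noise level $\epsilon$, and the control $v^{\epsilon}$ simultaneously. Two difficulties intertwine: the phase space $C_r$ is infinite-dimensional, so compactness demands both equicontinuity in the $\tau$-argument and uniform decay of the fading-memory tail via the weight $e^{r\tau}$; and the time axis is all of $\R$, so compactness in $\sC_r$ must be read through the compact-open topology. The key analytical tool is precisely the stochastic Gr\"onwall estimate of Lemma \ref{lem: truncated-controlled-equation}, which controls the $L^{1}$-distance between two controlled solution maps by the $L^{3/2}$-distance of their initial segments \emph{uniformly in $T$}; this uniformity is what makes continuity in $\xi\in K$ tractable, and the resulting uniformity over $\xi$ in the Laplace principle is then extracted from a routine diagonal argument against any sequence violating the desired convergence.
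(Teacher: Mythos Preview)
Your overall framework agrees with the paper's: reduce to the Budhiraja--Dupuis sufficient conditions (A1)--(A2), where (A2) is already Lemma~\ref{lem: proposition-skeleton-equation-1} and Lemma~\ref{lem: proposition-skeleton-equation-2}, and then verify (A1) for the controlled processes $Y^{\epsilon,v_\epsilon}_{\cdot;t_0,\xi^\epsilon}$. Where you diverge is in \emph{how} you establish (A1). You propose the classical tightness\,/\,Skorokhod\,/\,identification route: obtain tightness of $\{Y^{\epsilon,v_\epsilon}_{\cdot;t_0,\xi^\epsilon}\}$ in $\sC_r$, pass to a subsequential limit on a common probability space, and identify it as the skeleton solution. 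The paper instead avoids tightness altogether by a direct comparison: it writes
\[
Y^{\epsilon,v_\epsilon}_{t;t_0,\xi^\epsilon}-Y^{0,v}_{t;t_0,\xi}
=\bigl(Y^{\epsilon,v_\epsilon}_{t;t_0,\xi^\epsilon}-Y^{0,v_\epsilon}_{t;t_0,\xi^\epsilon}\bigr)
+\bigl(Y^{0,v_\epsilon}_{t;t_0,\xi^\epsilon}-Y^{0,v}_{t;t_0,\xi}\bigr),
\]
shows the second bracket converges in distribution by the \emph{deterministic} continuity of the skeleton map $(v,\xi)\mapsto Y^{0,v}_{\cdot;t_0,\xi}$ already proved in Lemmas~\ref{lem: well-posedness-skeleton-equation}--\ref{lem: proposition-skeleton-equation-1}, and then drives the first bracket to zero in $L^1(\Omega;C_r)$ for each fixed $t$ via an energy identity and the stochastic Gr\"onwall lemma (Lemma~\ref{lem: Stochastic Gronwall}), obtaining an explicit bound of order $\sqrt{\epsilon}$. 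Convergence in $\sC_r$ then follows from the pointwise-in-$t$ convergence together with the uniform bound~\eqref{eq: estimate-controlled-solution-2} and dominated convergence in the $\vertiii{\cdot}_r$ sum.

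Your route should work, but the ``hard part'' you flag---tightness in $C([t_0,n];C_r)$ with $C_r$ infinite-dimensional---is exactly what the paper sidesteps. In your scheme you would still need to argue that, for $t\ge t_0$, the $C_r$-valued segment decomposes into a shifted copy of $\xi^\epsilon\in K$ on $(-\infty,t_0-t]$ and the finite-dimensional $\R^d$-valued path on $[t_0,t]$, so that compactness of $K$ plus standard Kolmogorov-type tightness in $C([t_0,n];\R^d)$ suffices; this is doable but is extra work that the paper's comparison-with-skeleton trick renders unnecessary. The payoff of the paper's approach is that the delicate weak-topology passage $v_\epsilon\Rightarrow v$ is handled once and for all at the deterministic level in Lemma~\ref{lem: proposition-skeleton-equation-1}, and the stochastic part reduces to a quantitative $O(\sqrt{\epsilon})$ estimate; your approach has the advantage of being the generic template, but here it duplicates effort already invested in the skeleton analysis.
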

\begin{theorem} \label{thm: LDP-stationary solution}
 The stationary solution maps $\{ \cY^{\epsilon}\}_{\epsilon>0}$ for the crude cocycle $(U^{\epsilon}, \theta)$ satisfy the Laplace principle with the good rate function $\cI_{-\infty}$.
\end{theorem}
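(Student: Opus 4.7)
The plan is to prove the Laplace principle for $\{\cY^\epsilon\}_{\epsilon>0}$ via the weak convergence approach, leveraging the strong stationary solution representation $\cY^\epsilon = \mathcal{G}^\epsilon(W)$ from Theorem \ref{thm: stationary solution}. By Lemma \ref{lem: variational representation} applied to the bounded measurable functional $h: \sC_r \to \R$, one has
\begin{equation*}
    -\epsilon \log \E\Bigl[\exp\Bigl(-\tfrac{1}{\epsilon} h(\cY^\epsilon)\Bigr)\Bigr] = \inf_{v} \E\Bigl[\tfrac{1}{2}\int_{-\infty}^{+\infty} |v(s)|^2\,\d s + h(\cY^{\epsilon,v})\Bigr],
\end{equation*}
where the infimum runs over $\cF_t$-predictable $L^2(\R;\R^m)$-valued controls $v$ and $\cY^{\epsilon,v}$ is the controlled stationary solution, obtained by adding the drift $\sigma(\cY^{\epsilon,v})\,v$ in \eqref{eq: stationary solution}. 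The structural fact underlying the argument is that on every window $[-N,N]$, $\cY^{\epsilon,v}$ coincides with the limit as $n\to\infty$ of the controlled solution maps $Y^{\epsilon,v}_{\cdot;\,-n,\,\cY^{\epsilon,v}(-n)}$ of \eqref{eq: truncated SFDEs epsilon}, and the stochastic Gr\"onwall estimate from Lemma \ref{lem: Stochastic Gronwall} delivers a uniform bound $\sup_{\epsilon\in(0,1)} \E\vertiii{\cY^{\epsilon,v}}_r^{3/2} < +\infty$ depending only on $\E\|v\|_{L^2}^2$.

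For the Laplace upper bound, fix $\Phi\in\sC_r$ with $\cI_{-\infty}(\Phi)<+\infty$ and $\delta>0$, and pick deterministic $v^\ast\in L^2(\R;\R^m)$ such that $\Phi = \mathcal{G}^0_{-\infty}(\int_0^\cdot v^\ast(s)\,\d s)$ and $\tfrac{1}{2}\|v^\ast\|_{L^2}^2 \leq \cI_{-\infty}(\Phi)+\delta$. Substituting $v^\ast$ into the variational formula reduces the task to proving $\cY^{\epsilon,v^\ast} \to \Phi$ in probability in $\sC_r$ as $\epsilon\to 0$. I would do this in two stages: first approximate $\cY^{\epsilon,v^\ast}$ on $[-N,N]$ by the controlled solution map starting from time $-n$; then apply Theorem \ref{thm: uniform-LDP} to show this map converges to the skeleton trajectory $Y^{0,v^\ast}_{\cdot;\,-n,\xi_n}$; and finally invoke Lemma \ref{lem: well-posedness-skeleton-equation} to pass $n\to\infty$ and identify the limit with $\mathcal{Y}^{0,v^\ast,\ast}=\Phi$.

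For the Laplace lower bound, choose $\delta>0$ and near-optimal random controls $v^\epsilon$ satisfying $\E\bigl[\tfrac{1}{2}\|v^\epsilon\|_{L^2}^2 + h(\cY^{\epsilon,v^\epsilon})\bigr]$ within $\delta$ of the infimum. Boundedness of $h$ forces $\sup_\epsilon \E\|v^\epsilon\|_{L^2}^2 \leq C$, so the laws of the $v^\epsilon$ are tight for the weak topology on sublevel balls of $L^2(\R;\R^m)$; together with the uniform $\sC_r$-moment bound on $\cY^{\epsilon,v^\epsilon}$, extract a subsequence along which $(v^{\epsilon_k}, \cY^{\epsilon_k,v^{\epsilon_k}})$ converges in distribution to some $(v,\Phi)$ on an auxiliary probability space via the Skorokhod representation. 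Passing to the limit in the controlled version of \eqref{eq: stationary solution} on each window $[-N,N]$ and using the uniqueness of \eqref{eq: skeleton equation 0} identifies $\Phi = \mathcal{G}^0_{-\infty}(\int_0^\cdot v\,\d s)$; hence $\cI_{-\infty}(\Phi) \leq \tfrac{1}{2}\E\|v\|_{L^2}^2$, and a Fatou/weak lower semicontinuity argument closes the bound.

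The hard part is the identification step in the lower bound: controlling the memory tail of $\cY^{\epsilon,v^\epsilon}$ coming from $-\infty$ uniformly in $\epsilon$, so that the limit satisfies the skeleton equation on the whole real line rather than only on each finite window. This is precisely where the dissipativity assumption $2\lambda_1>2\lambda_2\mu_1^{(2r)}+\lambda_3\mu_2^{(2r)}$ combined with the stochastic Gr\"onwall estimate underlying Lemma \ref{lem: truncated-controlled-equation} enters decisively, providing the uniform integrability needed to exchange the limit with the drift $b(\cY^{\epsilon,v^\epsilon})$ and the Young-driven term $\sigma(\cY^{\epsilon,v^\epsilon})\,v^\epsilon$ over the unbounded past.
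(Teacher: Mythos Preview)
Your overall strategy via the variational representation and your upper-bound argument are essentially the paper's: approximate $\cY^{\epsilon,v^\ast}$ by finite-starting-time controlled solutions $Y^{\epsilon,v^\ast}_{\cdot;-n,\xi}$, invoke condition (A1) from the proof of Theorem~\ref{thm: uniform-LDP}, and let $n\to\infty$ via Lemma~\ref{lem: well-posedness-skeleton-equation}.

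The divergence is in the lower bound. The paper never attempts tightness of $\{\cY^{\epsilon,v_\epsilon,\ast}\}$ in $\sC_r$ or a Skorokhod/limit-in-the-equation identification. Instead it verifies the single convergence condition (B1) --- that $\cY^{\epsilon,v_\epsilon,\ast}\to\cY^{0,v,\ast}$ in law whenever $v_\epsilon\to v$ in $S_M$ --- by the \emph{same} sandwich you used for the upper bound, now with random controls: Lemma~\ref{lem: uniform-controlled-equation} shows that $Y^{\epsilon,v_\epsilon}_{\cdot;-n,\xi}\to\cY^{\epsilon,v_\epsilon,\ast}$ in $L^1(\Omega;\sC_r)$ \emph{uniformly in $\epsilon$} (with a fixed deterministic $\xi$), so one may interchange the limits $\epsilon\to 0$ and $n\to\infty$ when testing against bounded Lipschitz $h$. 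Once (B1) and (B2) hold, both Laplace bounds follow from \cite{Budhiraja_variational_2000} without further compactness.

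Your proposed route has a genuine gap at the step ``together with the uniform $\sC_r$-moment bound \dots\ extract a subsequence along which $(v^{\epsilon_k},\cY^{\epsilon_k,v^{\epsilon_k}})$ converges in distribution''. The phase space $C_r$ is infinite-dimensional and the norm $\|\cdot\|_r$ sees the entire infinite past, so a bound on $\sup_\epsilon\E\vertiii{\cY^{\epsilon,v^\epsilon}}_r^{3/2}$ does \emph{not} by itself yield tightness of the laws in $\sC_r$; you would additionally need a uniform-in-$\epsilon$ tail criterion controlling $e^{r\tau}|\cY^{\epsilon,v^\epsilon}(t+\tau)(0)|$ as $\tau\to-\infty$, together with equicontinuity. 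Obtaining that control essentially forces you back to the uniform approximation estimate of Lemma~\ref{lem: uniform-controlled-equation}, at which point the paper's sandwich argument is shorter and avoids Skorokhod altogether. A smaller issue: $\sup_\epsilon\E\|v^\epsilon\|_{L^2}^2<\infty$ does not give tightness in the weak topology on any $S_M$; the standard fix (replacing $v^\epsilon$ by a stopped version lying in $\cA_M$) should be stated.
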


Let $\Pi: \sC_{r} \rightarrow C_r$ be the projection operator that maps $\Phi \in \sC_r$ to $\Phi(0) \in C_r$. Define the rate function $\cI^{\prime}(\phi) := \inf_{\Phi \in \sC_r} \{ \cI_{-\infty}(\Phi): \,\Pi \Phi= \phi \}$ for any $\phi \in C_r$. If $m=d$ and $\sigma(\phi)$ is an invertible matrix in $\R^{d\times d}$ for any $\phi \in C_r$, this rate function takes the following form:
\begin{equation*}
  \cI^{\prime}(\phi)= \frac{1}{2} \int_{-\infty}^{0} \Big| \big(\dot{\phi}(s)-b(\phi(\cdot+s)) \big)^{T} \big( \sigma \sigma^{T}(\phi(\cdot+s)) \big)^{-1} \big(\dot{\phi}(s)-b(\phi(\cdot+s)) \big) \Big| \, \d s,
\end{equation*}
which, in its structure, does not involve taking the infimum over different orbits, in contrast to one derived from the quasi-potential.
Using the contraction principle (cf. \cite[Theorem 4.2.1]{Dembo_Large_2010}), we obtain the LDP for invariant measure $ \nu^{\epsilon} := \P \circ (\mathcal{Y}^{\epsilon}(0))^{-1}$ rate function with $\cI^{\prime}$.  
\begin{corollary} \label{cor: LDP invariance measure}
 $\cI^{\prime}: C_r \mapsto [0,+\infty]$ is a good rate function. The family of invariant measures $\{ \nu^{\epsilon}\}_{\epsilon>0}$ satisfies the large deviation principle with $\cI^{\prime}$, meaning the following holds:
 \begin{itemize}
  \item [1.] Large deviation upper bound. For each closed subset $F$ of $C_r$,
   \begin{equation*}
    \limsup_{\epsilon \rightarrow 0} \epsilon\,\log \nu^{\epsilon}(F) \leq - \cI^{\prime}(F).
   \end{equation*}
  \item [2.] Large deviation lower bound. For each open subset $G$ of $C_r$,
   \begin{equation*}
    \liminf_{\epsilon \rightarrow 0} \epsilon\,\log \nu^{\epsilon}(G) \geq -\cI^{\prime}(G).
   \end{equation*}
 \end{itemize}
\end{corollary}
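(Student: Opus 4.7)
The plan is to apply the contraction principle to the LDP for stationary solutions established in Theorem \ref{thm: LDP-stationary solution}, using the projection map $\Pi : \sC_r \to C_r$, $\Pi(\Phi) = \Phi(0)$. The only point that needs a brief verification is continuity of $\Pi$. From the inequality recalled just after \eqref{def: path space}, namely
$$ \frac{\sup_{|t| \leq 1} \|\Psi(t)\|_r^2 \wedge 1}{2 \, e^{4r}} \leq \vertiii{\Psi}_r^2 \qquad \text{for every } \Psi \in \sC_r, $$
applied to $\Psi = \Phi_n - \Phi$, convergence $\vertiii{\Phi_n - \Phi}_r \to 0$ in $\sC_r$ forces $\|\Phi_n(0) - \Phi(0)\|_r \to 0$ in $C_r$, so $\Pi$ is continuous.

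Next, I would upgrade the Laplace principle supplied by Theorem \ref{thm: LDP-stationary solution} to a bona fide LDP on the Polish space $\sC_r$ using the equivalence cited in the paper through \cite{Dupuis_Weak_1997}. Thus $\{\cY^{\epsilon}\}_{\epsilon>0}$ satisfies the LDP on $\sC_r$ with the good rate function $\cI_{-\infty}$, whose goodness is provided by Lemma \ref{lem: proposition-skeleton-equation-1}.

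Finally, since $\nu^{\epsilon} = \P \circ (\cY^{\epsilon}(0))^{-1}$ is the pushforward of the law of $\cY^{\epsilon}$ under the continuous map $\Pi$, the contraction principle \cite[Theorem 4.2.1]{Dembo_Large_2010} yields the LDP for $\{\nu^{\epsilon}\}_{\epsilon>0}$ with rate function
$$ \phi \mapsto \inf \{\, \cI_{-\infty}(\Phi) : \Phi \in \sC_r, \; \Pi \Phi = \phi \,\} = \cI^{\prime}(\phi), $$
and simultaneously asserts that $\cI^{\prime}$ is a good rate function. Goodness can also be checked directly: the level set $\{\phi : \cI^{\prime}(\phi) \leq c\}$ coincides with $\Pi(\{\Phi : \cI_{-\infty}(\Phi) \leq c\})$, which is compact as the continuous image of a compact set. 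Because the whole argument reduces to the textbook contraction principle applied to a continuous projection, there is no genuine obstacle here; the substantive work has already been carried out in the proof of Theorem \ref{thm: LDP-stationary solution}.
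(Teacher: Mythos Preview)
Your proposal is correct and follows exactly the route the paper indicates: the paper does not give a separate proof of this corollary but simply remarks that it follows from the contraction principle \cite[Theorem 4.2.1]{Dembo_Large_2010} applied to the projection $\Pi:\sC_r\to C_r$. Your additional verification that $\Pi$ is continuous and that $\cI'$ is a good rate function is a welcome elaboration of details the paper leaves implicit.
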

By the same way, the contraction principle deduces the LDP for solutions $\{ Y^{\epsilon}(\cdot;t_0,\xi)\}_{\epsilon>0}$ of Eq. \eqref{eq: truncated SFDEs epsilon} from the uniform LDP for solution maps $\{ Y_{\cdot;t_0,\xi}^{\epsilon}\}_{\epsilon>0}$ shown in Theorem \ref{thm: uniform-LDP}. As discussed after Theorem \ref{thm: stationary solution}, $\cY^{\epsilon}(\cdot)(0)$ is the stationary solution defined in \cite{Ito_stationary_1964} and the marginal distribution of $\cY^{\epsilon}(t)(0)$ is invariant under the time shift. Using the contraction principle again, the LDP for the marginal distributions of $\{\cY^{\epsilon}(t)(0)\}_{\epsilon}$ comes from Corollary \ref{cor: LDP invariance measure}.

\section{SFDE and stationary solution} \label{sec: SFDE and Stationary solution}

In the section, we will show the existence and pathwise uniqueness of solution maps to Eq. \eqref{eq: truncated SFDEs epsilon} in Section \ref{subsec: SFDE} and construct the stationary solution in Section \ref{subsec: stationary solution}.

\subsection{SFDE} \label{subsec: SFDE}
In this subsection, we outline the existing results on the well-posedness of Eq. \eqref{eq: truncated SFDEs epsilon} and proved the existence of Borel measurable map $\mathcal{G}^{\epsilon}_{t_0}: \W \times C_r \rightarrow \sC_r$ such that $\mathcal{G}^{\epsilon}_{t_0}(W,\xi) \in \sC_{r}$ is $\cF_{t}$-adapted and is the solution map of Eq. \eqref{eq: truncated SFDEs epsilon} with initial data $\xi \in C_r$.
\begin{definition} \label{def: solution}
For fixed $t_0 \in \R$, a continuous $\R^d$-valued and ${\mathcal{F}}_{t}$-adapted process $Y^{\epsilon} (t; t_0, \xi) $ is called a global solution to Eq. \eqref{eq: truncated SFDEs epsilon} with initial data $\xi \in C_r$, if 
\begin{equation*}
 \begin{cases}
  Y^{\epsilon} (t; t_0, \xi) = \xi(0)+\int_{t_0}^{t} b(Y^{\epsilon}_{s;t_0,\xi}) \, \d s+\int_{t_0}^{t} \sigma(Y^{\epsilon}_{s;t_0,\xi}) \, \d W(s) \quad \mbox{a.s.}, & \quad \forall t \geq t_0; \\
  Y^{\epsilon}(t; t_0, \xi)= \xi(t-t_0), & \quad \forall t \leq t_0,
 \end{cases}
\end{equation*}
and the segment process $Y_{t;t_0,\xi}^{\epsilon} := \left\{ Y^{\epsilon}(t+\tau;t_0,\xi); \, \tau \in (-\infty,0] \right\} \in C_r$. And we call $Y_{t;t_0,\xi}^{\epsilon}$ as the global solution map of Eq. \eqref{eq: truncated SFDEs epsilon} with initial data $\xi \in C_r$.
\end{definition}

Recall the assumptions (H1-H3) of continuous functionals $b:C_{r} \rightarrow \R^{d}$ and $\sigma:C_{r} \rightarrow \R^{d\times m}$ in Section \ref{sec: Preliminary}. Under assumptions (H1-H3), Wu et al. \cite[Theorem 3.2]{Wu_Stochastic_2017} give the existence of the global solution $Y^{\epsilon}(t;t_0,\xi)$ to Eq. \eqref{eq: truncated SFDEs epsilon}. Furthermore, with more rigorous coefficient conditions, these solutions remain uniformly bounded in $L^2(\Omega, C_r)$ and are unique. We summarize the results of \cite[Section 3]{Wu_Stochastic_2017} as the following Lemma \ref{lem: truncated-solution}, which emphasizes the dependence of each estimated coefficient on $\epsilon$.
\begin{lemma} \label{lem: truncated-solution}
Under assumptions (H1-H3), for any $t_0 \in \R$, initial data $\xi\in C_{r}$ and $\epsilon \in [0,1]$, 
\begin{enumerate}[i)]
 \item Eq. \eqref{eq: truncated SFDEs epsilon} has a global solution $Y^{\epsilon}(t; t_0, \xi)$ on $[t_0,+\infty)$;
 
 \item furthermore, if the coefficients $\lambda_{1},\lambda_{2}$ and $\lambda_3$ satisfy $2\lambda_{1}>2\lambda_{2}\mu_{1}^{(2r)}+ \epsilon \lambda_{3}\mu_{2}^{(2r)}$, then there exists $\lambda \in (0,(2\lambda_{1}-2\lambda_{2}\mu_{1}^{(2)}- \epsilon \lambda_{3}\mu_{2}^{(2r)} ) \wedge 2r )$ such that 
 \begin{equation*}
  \E |Y^{\epsilon}(t; t_0, \xi)|^{2}\leq C_{1}(\epsilon, \lambda, \xi) \, e^{-\lambda (t-t_0)}+C_{2}(\epsilon,\lambda, \xi), \quad \forall \, t \geq t_0,
 \end{equation*}
 where $C_{1}(\epsilon, \lambda, \xi)$ and $C_{2}(\epsilon, \lambda, \xi)$ are uniformly bounded about coefficient $\epsilon$; 
 
 \item under the conditions in ii), the solutions $Y^{\epsilon}(t; t_0, \xi_1)$ and $Y^{\epsilon}(t; t_0, \xi_2)$ to \eqref{eq: truncated SFDEs epsilon} with different initial data $\xi_{1} \in C_r$ and $\xi_{2} \in C_r$ satisfy
 \begin{equation*}
  \E \big| Y^{\epsilon}(t; t_0, \xi_1)-Y^{\epsilon}(t; t_0, \xi_2) \big|^{2}\leq C_{3}(\epsilon, \lambda) \, \big\|\xi_{1}-\xi_{2} \big\|_{r}^{2} \, e^{-\lambda (t-t_0)}, \quad \forall \, t \geq t_0,
 \end{equation*}
 where $C_{3}(\epsilon, \lambda)$ is uniformly bounded. Thus, SFDEs \eqref{eq: truncated SFDEs epsilon} admit unique global solutions. 
\end{enumerate}
\end{lemma}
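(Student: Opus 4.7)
The plan is to reduce part (i) to the construction already in \cite{Wu_Stochastic_2017} and to obtain (ii) and (iii) through a single weighted It\^o estimate that crucially exploits the moment condition $\mu_{1},\mu_{2}\in\cM_{2r}$. For (i) I would only outline the standard truncation procedure: introduce the globally Lipschitz cut-off $b_{k}$ that agrees with $b$ on $\{\|\varphi\|_{r}\leq k\}$, note that (H3) already forces $\sigma$ to be globally Lipschitz, build a local solution $Y^{\epsilon,k}$ by Picard iteration, and use the a priori bound coming from (ii) to show that the exit times from $\{\|Y^{\epsilon,k}_{t;t_{0},\xi}\|_{r}\leq k\}$ tend to $+\infty$ as $k\rightarrow+\infty$; since this is precisely Theorem~3.2 of \cite{Wu_Stochastic_2017}, I would simply cite it.

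For (ii), the core step is to apply It\^o's formula to $e^{\lambda t}|Y^{\epsilon}(t;t_{0},\xi)|^{2}$ and take expectation. Splitting $b(Y^{\epsilon}_{t})$ and $\sigma(Y^{\epsilon}_{t})$ around the value at the zero function in $C_{r}$, the dissipativity (H2) and the Lipschitz bound (H3) together with Young's inequality produce, on the right-hand side, a dissipative contribution $(\lambda-2\lambda_{1})e^{\lambda t}\E|Y^{\epsilon}(t)|^{2}$, two delay integrals $2\lambda_{2}\,e^{\lambda t}\int_{-\infty}^{0}\E|Y^{\epsilon}(t+\tau)|^{2}\mu_{1}(\d\tau)$ and $\epsilon\lambda_{3}\,e^{\lambda t}\int_{-\infty}^{0}\E|Y^{\epsilon}(t+\tau)|^{2}\mu_{2}(\d\tau)$, and a constant carrying $|b(0)|^{2}$ and $|\sigma(0)|^{2}$. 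I would then rewrite the delay integrals as $\int_{-\infty}^{0}e^{-\lambda\tau}\,e^{\lambda(t+\tau)}\E|Y^{\epsilon}(t+\tau)|^{2}\mu_{i}(\d\tau)$ and split at $\tau=t_{0}-t$: the tail $\tau\leq t_{0}-t$ contributes the initial segment bounded via $|\xi(\tau)|\leq\|\xi\|_{r}e^{-r\tau}$ and the finiteness of $\mu_{i}^{(2r)}$, while the head contributes at most $\mu_{i}^{(\lambda)}\sup_{t_{0}\leq s\leq t}e^{\lambda s}\E|Y^{\epsilon}(s)|^{2}$. Choosing $\lambda\in(0,2r)$ small enough that $2\lambda_{1}-\lambda-2\lambda_{2}\mu_{1}^{(2r)}-\epsilon\lambda_{3}\mu_{2}^{(2r)}>0$ (possible under the gap assumption since $\mu_{i}^{(\lambda)}\leq\mu_{i}^{(2r)}$ for $\lambda\leq 2r$), an integral Gr\"onwall step closes the inequality, yielding the claimed bound with $C_{1}$ carrying the initial-data contribution and $C_{2}$ the $b(0),\sigma(0)$ contribution, both bounded uniformly in $\epsilon\in[0,1]$.

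Part (iii) will be obtained by exactly the same scheme applied to $e^{\lambda t}|Y^{\epsilon}(t;t_{0},\xi_{1})-Y^{\epsilon}(t;t_{0},\xi_{2})|^{2}$; now (H2) and (H3) can be invoked directly between the two solutions, so no $b(0)$ or $\sigma(0)$ residues appear, the resulting inequality is homogeneous in $\|\xi_{1}-\xi_{2}\|_{r}^{2}$, and pathwise uniqueness drops out on setting $\xi_{1}=\xi_{2}$. The main obstacle throughout will be the joint selection of $\lambda$: it must be strictly positive to yield exponential decay, bounded above by $2r$ so that $e^{-\lambda\tau}$ remains integrable against $\mu_{i}$, and kept small enough that the dissipativity gap $2\lambda_{1}-\lambda-2\lambda_{2}\mu_{1}^{(2r)}-\epsilon\lambda_{3}\mu_{2}^{(2r)}$ stays positive, while the resulting constants $C_{1},C_{2},C_{3}$ must remain bounded as $\epsilon$ ranges in $[0,1]$; the quantitative assumption in (ii) with its explicit $\epsilon$-factor in front of $\lambda_{3}\mu_{2}^{(2r)}$ is precisely what makes such a choice feasible uniformly in $\epsilon$.
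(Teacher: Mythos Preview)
Your proposal is correct and aligned with the paper, which does not actually prove this lemma but simply cites \cite[Theorem 3.2]{Wu_Stochastic_2017}; your sketch is a faithful reconstruction of that argument. The only minor deviation is in how you handle the delay integrals: you bound them pointwise in $t$ by $\mu_{i}^{(\lambda)}\sup_{s\leq t}e^{\lambda s}\E|Y^{\epsilon}(s)|^{2}$ and then close via an integrating-factor argument, whereas the paper (via its Lemma~\ref{lem: estimate for mu}, again taken from \cite{Wu_Stochastic_2017}) first integrates in time and uses Fubini to obtain $\int_{t_{0}}^{t}e^{\lambda s}\int_{-\infty}^{0}|\phi(s+\tau)|^{2}\mu_{i}(\d\tau)\,\d s\leq \frac{\mu_{i}^{(2r)}}{2r-\lambda}\|\xi\|_{r}^{2}+\mu_{i}^{(2r)}\int_{t_{0}}^{t}e^{\lambda s}|\phi(s)|^{2}\,\d s$, so that the combined coefficient on $\int e^{\lambda s}\E|Y^{\epsilon}(s)|^{2}\,\d s$ is negative and the term can be dropped outright without any Gr\"onwall iteration---this is slightly cleaner, but both routes yield the same constants and the same range of admissible $\lambda$.
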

We always assume (H1-H3) and the conditions in Lemma \ref{lem: truncated-solution} ii) hold in what follows to ensure that Eqs. \eqref{eq: truncated SFDEs epsilon} admit a uniformly bounded global solution. More details about $C_{1}(\epsilon, \lambda, \xi)$, $C_{2}(\epsilon, \lambda, \xi)$ and $C_{3}(\epsilon, \lambda)$ can be found in \cite[Theorem 3.2]{Wu_Stochastic_2017}. To simplify, we always write these constants as $C_{1,\epsilon}$, $C_{2,\epsilon}$ and $C_{3,\epsilon}$ respectively. In \cite[Section 4]{Wu_Stochastic_2017}, Wu et al. give some properties of solution map $Y^{\epsilon}_{t;t_0,\xi}$, including the continuity, strong Markov property, mean-square boundedness and so on. We summarize the mean-square boundedness of solution map $Y^{\epsilon}_{t;t_0,\xi}$ as the following Lemma \ref{lem: truncated-solution-map} and highlight that each estimated coefficient depends on $\epsilon$.
 
\begin{lemma} \label{lem: truncated-solution-map}
 Under the above assumptions, for any fixed $\epsilon>0$, $t_0 \in \R$ and initial data $\xi \in C_r$, solution map $Y^{\epsilon}_{t;t_0,\xi}$ is a continuous, $\mathcal{F}_t$-adapted and strong homogeneous Markov process. For $\lambda \in(0,(2\lambda_{1}-2\lambda_{2}\mu_{1}^{(2r)}- \epsilon \lambda_{3}\mu_{2}^{(2r)}) \wedge 2r)$, there exist $C_{4}(\epsilon,\lambda,\xi),C_{5}(\epsilon,\lambda,\xi) \geq 0$ such that
 \begin{equation*}
  \mathbb{E}\|Y^{\epsilon}_{t;t_0,\xi}\|_{r}^{2}\leq C_{4}(\epsilon,\lambda,\xi) \, e^{-\lambda (t-t_0)}+C_{5}(\epsilon,\lambda,\xi), \quad \forall \, t \geq t_0.
 \end{equation*}
For some $0<\epsilon_0< \lambda$, let $\lambda_{\epsilon_0}= \lambda- \epsilon_0$. Then there exist a constant $C_6(\epsilon, \lambda, \epsilon_0)>0$ such that solution maps $Y^{\epsilon}_{t;t_0,\xi_1}$ and $Y^{\epsilon}_{t;t_0,\xi_2}$ with the different initial data $\xi_1$ and $\xi_{2}\in C_r$ satisfy 
\begin{equation} \label{eq: cauchy-truncated SFDEs solution map}
 \mathbb{E} \big\|Y^{\epsilon}_{t;t_0,\xi_1}-Y^{\epsilon}_{t;t_0,\xi_2} \big\|_{r}^{2}\leq C_6(\epsilon, \lambda, \epsilon_0) \, \big\|\xi_{1}-\xi_{2} \big\|_{r}^{2} \, e^{-\lambda_{\epsilon_0} (t-t_0)}, \quad \forall \, t \geq t_0.
\end{equation}
These constants $C_{4}(\epsilon, \lambda, \xi)$, $C_{5}(\epsilon, \lambda, \xi)$ and $C_{6}(\epsilon, \lambda, \epsilon_0)$ are uniformly bounded with respect to coefficient $\epsilon$, and we briefly note them as $C_{4,\epsilon}$, $C_{5,\epsilon}$ and $C_{6,\epsilon}$ respectively.
\end{lemma}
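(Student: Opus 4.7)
The plan is to obtain this corollary as a direct consequence of Theorem~\ref{thm: LDP-stationary solution} through the contraction principle. Observe that $\nu^\epsilon = \P \circ (\cY^\epsilon(0))^{-1}$ is precisely the pushforward of the law of the $\sC_r$-valued stationary solution $\cY^\epsilon$ under the time-zero evaluation $\Pi:\sC_r\to C_r$, $\Phi\mapsto\Phi(0)$. So the LDP for $\nu^\epsilon$ with rate function $\cI'$ will fall out of the LDP for $\cY^\epsilon$ with rate function $\cI_{-\infty}$ once $\Pi$ is shown to be continuous.

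First, I would convert Theorem~\ref{thm: LDP-stationary solution} from the Laplace principle statement to an LDP statement using the equivalence of the Laplace principle and the LDP for good rate functions on Polish spaces (as cited from \cite[Theorem~1.2.1, Theorem~1.2.3]{Dupuis_Weak_1997}). Combined with the goodness of $\cI_{-\infty}$ (supplied by Lemma~\ref{lem: proposition-skeleton-equation-1}), this yields that $\{\cY^\epsilon\}_{\epsilon>0}$ satisfies the LDP on $\sC_r$ with the good rate function $\cI_{-\infty}$.

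Second, I would verify the continuity of $\Pi$. This is immediate from the two-sided bound displayed in Section~\ref{sec: Preliminary},
\begin{equation*}
    \sum_{n=1}^{+\infty} \frac{\sup_{|t| \leq n} \| \Phi(t) \|_{r}^2 \wedge 1 }{2^{n} e^{4nr} } \leq \vertiii{\Phi}_r^2,
\end{equation*}
which shows that $\vertiii{\Phi_k - \Phi}_r \to 0$ forces $\sup_{|t|\leq N}\|\Phi_k(t)-\Phi(t)\|_r \to 0$ for every $N$, hence in particular $\|\Phi_k(0)-\Phi(0)\|_r\to 0$, so $\Pi$ is continuous from $\sC_r$ to $C_r$.

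Third, I would apply the contraction principle \cite[Theorem~4.2.1]{Dembo_Large_2010} to the continuous map $\Pi$ and the LDP for $\{\cY^\epsilon\}$. This yields simultaneously that $\cI'(\phi)=\inf\{\cI_{-\infty}(\Phi):\Pi\Phi=\phi\}$ is a good rate function on $C_r$ (the image of a good rate function under a continuous surjection onto its effective domain is good) and that the family of pushforwards $\{\P\circ(\Pi\cY^\epsilon)^{-1}\}_{\epsilon>0}=\{\nu^\epsilon\}_{\epsilon>0}$ satisfies the LDP on $C_r$ with rate function $\cI'$, giving both the upper bound on closed sets and the lower bound on open sets as stated. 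There is no real obstacle here: all of the substantive work is already packaged into Theorem~\ref{thm: LDP-stationary solution}, and the corollary is a textbook application of continuity of the time-zero projection together with the contraction principle.
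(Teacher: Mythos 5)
Your proposal does not address the statement you were asked to prove. The statement is Lemma~\ref{lem: truncated-solution-map}, which asserts properties of the solution map $Y^{\epsilon}_{t;t_0,\xi}$ of Eq.~\eqref{eq: truncated SFDEs epsilon}: that it is a continuous, $\mathcal{F}_t$-adapted, strong homogeneous Markov process in $C_r$, that it obeys the uniform-in-time second-moment bound $\mathbb{E}\|Y^{\epsilon}_{t;t_0,\xi}\|_{r}^{2}\leq C_{4,\epsilon}e^{-\lambda(t-t_0)}+C_{5,\epsilon}$, and that two solution maps started from different initial data contract exponentially as in \eqref{eq: cauchy-truncated SFDEs solution map}. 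What you have written instead is a proof of the corollary on the LDP for the invariant measures $\{\nu^{\epsilon}\}_{\epsilon>0}$, deduced from Theorem~\ref{thm: LDP-stationary solution} via continuity of the projection $\Pi$ and the contraction principle. That is a different result, appearing at the end of Section~\ref{sec: Preliminary}, and nothing in your argument touches the moment estimates, the Markov property, or the contraction estimate that Lemma~\ref{lem: truncated-solution-map} claims. Worse, the logical order is reversed: Lemma~\ref{lem: truncated-solution-map} is an ingredient used (via \eqref{eq: cauchy-truncated SFDEs solution map}) to construct the stationary solution in Theorem~\ref{thm: stationary solution} and ultimately to prove Theorem~\ref{thm: LDP-stationary solution}, so invoking the latter here would be circular even if the statements matched.

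For the record, the paper does not give a self-contained proof of this lemma either: it is imported from Wu et al.\ (Section~4 of the cited reference) and ``listed without proving.'' A genuine proof would proceed by applying It\^o's formula to $e^{\lambda(t-t_0)}|Y^{\epsilon}(t;t_0,\xi)|^{2}$ (respectively to the squared difference of two solutions), using assumptions (H2)--(H3) together with Lemma~\ref{lem: estimate for mu} to absorb the delay terms, the condition $2\lambda_1>2\lambda_2\mu_1^{(2r)}+\epsilon\lambda_3\mu_2^{(2r)}$ to make the drift contribution negative, and Proposition~\ref{prop: cr}~iii) to pass from the pointwise bound on $|Y^{\epsilon}(t)|$ to the segment norm $\|Y^{\epsilon}_{t;t_0,\xi}\|_{r}$; the Markov property follows from pathwise uniqueness and the standard flow argument. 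None of this appears in your proposal.
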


The pathwise uniqueness shown in Lemma \ref{lem: truncated-solution-map} implies the existence of the strong solution map $Y^{\epsilon}_{t;t_0,\xi}$ in probability sense. To prove it, we need Lemma \ref{lem: Yamada-Watanabe}, a modified version of the general Yamada–Watanabe–Engelbert Theorem (see \cite[Theorem 3.14]{Kurtz_YamadaWatanabeEngelbert_2007}), which applies to $\cF_{t}$-adapted solution rather than compatible one.
\begin{lemma} \label{lem: truncated-measurable-map}
 For any fixed $\epsilon>0$ and $t_0 \in \R$ and for any $\mathcal{F}_{t_0}$-measurable random initial data $\xi \in C_r$, Eq. \eqref{eq: truncated SFDEs epsilon} has a unique $\mathcal{F}_{t}$-adapted strong solution map. It means that there is a Borel measurable map $\mathcal{G}^{\epsilon}_{t_0}: \W \times C_r \rightarrow \sC_r$ such that for any $\xi \in C_r$, $\mathcal{G}^{\epsilon}_{t_0}(W,\xi) \in \sC_{r}$ is $\cF_{t}$-adapted for any $t \geq t_0$ and $\P$-a.s. satisfies Eq. \eqref{eq: truncated SFDEs epsilon} with initial data $\xi \in C_r$.
\end{lemma}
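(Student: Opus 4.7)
The plan is to invoke the modified Yamada--Watanabe--Engelbert theorem (Lemma \ref{lem: Yamada–Watanabe}) to upgrade weak existence plus pathwise uniqueness to strong existence, where the ``strong'' solution is realized as a Borel measurable functional of the driving Wiener path. The two ingredients needed to apply Lemma \ref{lem: Yamada–Watanabe} are already in place:
(i) weak existence of a solution to Eq. \eqref{eq: truncated SFDEs epsilon} on the canonical path space, supplied by Lemma \ref{lem: truncated-solution}(i) together with the definition of the segment process $Y^{\epsilon}_{t;t_0,\xi} \in C_r$;
(ii) pathwise uniqueness, obtained from the $L^2$-contraction bound \eqref{eq: cauchy-truncated SFDEs solution map} of Lemma \ref{lem: truncated-solution-map} by taking $\xi_1=\xi_2$, so that any two $\mathcal{F}_t$-adapted solutions driven by the same $W$ and starting from the same initial segment coincide $\P$-a.s.

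First I would fix a deterministic $\xi \in C_r$ and apply Lemma \ref{lem: Yamada–Watanabe} to the canonical SDE on $\W$ obtained after time translation by $t_0$. This produces, for each $\xi$, a Borel map $\mathcal{G}^{\epsilon}_{t_0}(\cdot,\xi): \W \to \sC_r$ such that $\mathcal{G}^{\epsilon}_{t_0}(W,\xi)$ is $\cF_t$-adapted and $\P$-a.s.\ satisfies Eq. \eqref{eq: truncated SFDEs epsilon}. The adaptedness is built into Lemma \ref{lem: Yamada–Watanabe} by construction: for $t\leq t_0$ the value is the deterministic shift $\xi(t-t_0)$, while for $t\geq t_0$ it depends only on $\xi$ and on the increments $\{W(s)-W(t_0):\, t_0\leq s\leq t\}$, which are $\cF_t$-measurable.

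Next I would promote this to joint Borel measurability in $(\theta,\xi)\in\W\times C_r$. Choose a countable dense set $\{\xi_k\}\subset C_r$; by the previous step, $\theta\mapsto \mathcal{G}^{\epsilon}_{t_0}(\theta,\xi_k)$ is Borel for each $k$. For a general $\xi\in C_r$, pick $\xi_{k_n}\to\xi$ and observe that the contraction estimate \eqref{eq: cauchy-truncated SFDEs solution map} of Lemma \ref{lem: truncated-solution-map} gives $\mathcal{G}^{\epsilon}_{t_0}(W,\xi_{k_n})\to \mathcal{G}^{\epsilon}_{t_0}(W,\xi)$ in $L^2(\Omega;\sC_r)$ (after switching to a finite horizon via the compact-open topology on $\sC_r$). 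A standard measurable selection / modification argument then produces a version of $\mathcal{G}^{\epsilon}_{t_0}:\W\times C_r\to\sC_r$ that is jointly Borel measurable. Finally, for an $\mathcal{F}_{t_0}$-measurable random initial datum $\xi(\omega)$, I set $Y^{\epsilon}_{\cdot;t_0,\xi(\omega)}(\omega):=\mathcal{G}^{\epsilon}_{t_0}(W(\omega),\xi(\omega))$; joint measurability together with the independence of $\xi$ from the post-$t_0$ Wiener increments (by $\cF_{t_0}$-measurability) yields that this is an $\cF_t$-adapted strong solution, and uniqueness transfers from the deterministic case by conditioning on $\{\xi=\phi\}$.

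The main obstacle I expect is precisely the \emph{joint} Borel measurability step: Lemma \ref{lem: Yamada–Watanabe} hands back measurability of $\mathcal{G}^{\epsilon}_{t_0}(\cdot,\xi)$ one deterministic $\xi$ at a time, and because $C_r$ is an infinite-dimensional Polish space, pushing this through to measurability on $\W\times C_r$ is not automatic. The resolution rests on the quantitative continuity-in-initial-data estimate from Lemma \ref{lem: truncated-solution-map}, which converts density of $\{\xi_k\}$ in $C_r$ into $L^2$-convergence of the corresponding solution maps and thereby allows a pointwise limit to serve as a jointly measurable version. All remaining steps (time-translation reduction, adaptedness verification, and the random-$\xi$ reduction) are essentially bookkeeping once this measurability is established.
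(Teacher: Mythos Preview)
Your overall strategy (weak existence + pathwise uniqueness $\Rightarrow$ strong solution via Lemma~\ref{lem: Yamada–Watanabe}) matches the paper's, but you take an unnecessary detour that creates the very obstacle you worry about. The paper does \emph{not} apply Lemma~\ref{lem: Yamada–Watanabe} with $S=\W$ for each fixed deterministic $\xi$ and then patch; it applies it once with $S=\W\times C_r$ (note that Lemma~\ref{lem: Yamada–Watanabe} explicitly allows this choice). To do so, the paper first verifies pathwise uniqueness for \emph{random} $\cF_{t_0}$-measurable initial data $\xi=\tilde\xi$ a.s.\ (using \eqref{eq: cauchy-truncated SFDEs solution map} together with the strong Markov property), and then writes Eq.~\eqref{eq: truncated SFDEs epsilon} as a constraint $\Gamma^{\epsilon}_{t_0}$ on $\mathcal P(\W\times C_r\times\sC_r)$, checks convexity of $S_{\Gamma^{\epsilon}_{t_0}}$, and invokes Lemma~\ref{lem: Yamada–Watanabe}. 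The output is directly a Borel map $\mathcal G^{\epsilon}_{t_0}:\W\times C_r\to\sC_r$, so the ``joint measurability obstacle'' never arises.

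Your workaround for that obstacle has a genuine gap. The estimate \eqref{eq: cauchy-truncated SFDEs solution map} gives continuity of $\xi\mapsto \mathcal G^{\epsilon}_{t_0}(W,\xi)$ only in $L^2(\Omega;\sC_r)$, not pathwise in $\theta\in\W$. Defining $\mathcal G^{\epsilon}_{t_0}(\theta,\xi)$ as a limit of $\mathcal G^{\epsilon}_{t_0}(\theta,\xi_{k_n})$ along a dense sequence therefore runs into uncountably many $\xi$-dependent null sets, and the phrase ``standard measurable selection / modification argument'' does not by itself resolve this; you would need either a pathwise Lipschitz bound or a Kolmogorov-type regularity criterion in the $\xi$-variable, neither of which is provided. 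The paper's route sidesteps all of this.
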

\begin{proof}
 We will use the pathwise uniqueness of $\mathcal{F}_{t}$-adapted solution map $Y^{\epsilon}_{\cdot;t_0,\xi}$ and the Yamada--Watanabe--Engelbert Theorem (Lemma \ref{lem: Yamada-Watanabe}) to prove the existence of strong solutions. Let $(Y^{\epsilon}_{\cdot;t_0,\xi},W)$ and $(\Tilde{Y}^{\epsilon}_{\cdot;t_0,\Tilde{\xi}},\Tilde{W})$ be two solution maps of Eq. \eqref{eq: truncated SFDEs epsilon} on the same stochastic basis $(\Omega,{\mathcal{F}},\mathbb{P},({\mathcal{F}}_{t}))$ and with the double-side Wiener process $W(t)$ on $(\Omega,{\mathcal{F}},\mathbb{P})$. If the random initial data $\xi=\Tilde{\xi}$ $\P$-a.s., using the estimate \eqref{eq: cauchy-truncated SFDEs solution map} and the strong Markov property, we get $\P$-a.s.
 \begin{equation*}
  Y^{\epsilon}_{m;t_0,\xi}=\Tilde{Y}^{\epsilon}_{m;t_0,\Tilde{\xi}}, \quad \forall \, m \in \N \quad \mbox{and} \quad m \geq t_0.
 \end{equation*}
 By the definition of $Y^{\epsilon}_{t;t_0,\xi}$ and Proposition \ref{prop: cr} ii), we obtain that $\P$-a.s. $Y^{\epsilon}_{t;t_0,\xi}=\Tilde{Y}^{\epsilon}_{t;t_0,\Tilde{\xi}}$ for any $t >t_0$, which means the pathwise uniqueness.
 
Similar to \cite[Example 3.9]{Kurtz_YamadaWatanabeEngelbert_2007}, Eq. \eqref{eq: truncated SFDEs epsilon} gives a constraint $\Gamma_{t_0}^{\epsilon}$ that determines a convex subset $S_{\Gamma_{t_0}^{\epsilon}}$ of the Borel probability measure space $\mathcal{P}( \W \times C_r \times \sC_r)$. We say that $\varrho \in S_{\Gamma_{t_0}^{\epsilon}}$ with constraint $\Gamma_{t_0}^{\epsilon}$, if there exists random elements $(w, \tilde{\xi},X) \in \W \times C_r \times \sC_r$ such that $\varrho= \P \circ (w,\tilde{\xi},X)^{-1}$ with marginal distribution $\P \circ (W,\xi)^{-1} $ on $\W \times C_r$ and $(\tilde{\xi},w,X)$ satisfies
 \begin{align*}
  \lim_{n \rightarrow + \infty} \E \Bigg[ & \bigg| X(t)(0) -\tilde{\xi}(0) - \int_{t_0}^{t} b \big(X(s) \big) \, \d s \bigg. \Bigg. \\
  \Bigg. \bigg. & -\sqrt{\epsilon} \sum_{k=0}^{\lfloor n (t-t_0) \rfloor} \sigma \Big( X \Big(t_0+ \frac{k}{n} \Big) \Big) \cdot \Big( w \Big( t \wedge \big(t_0+\frac{k+1}{n} \big) \Big)-w \Big(t_0+ \frac{k}{n} \Big) \Big) \bigg| \Bigg] =0,
 \end{align*}
 if $t>t_0$; $X(t)(0)=\tilde{\xi}(t-t_0)$ if $t \leq t_0$; and $X(t)(\tau)=X(t+\tau)(0)$ for all $\tau \in (-\infty,0]$. Thus, $S_{\Gamma_{t_0}^{\epsilon}}$ is convex. Applying the pathwise uniqueness and Lemma \ref{lem: Yamada-Watanabe}, we obtain that Eq. \eqref{eq: truncated SFDEs epsilon} has a unique $\mathcal{F}_{t}$-adapted strong solution map.
\end{proof}

\subsection{Stationary solution} \label{subsec: stationary solution}

In this subsection, we will prove Theorem \ref{thm: stationary solution}, which shows the well-posedness of the stationary solution $\mathcal{Y}^{\epsilon}$ for $(U^{\epsilon},\theta)$. Refer to Definition \ref{def: stationary solution} for the definition of a stationary solution.

\begin{proof}[\bf Proof of Theorem \ref{thm: stationary solution}]
 We will divide the proof into the following several steps.

 {\bf Step 1: construct stationary solution.}
 Using estimate \eqref{eq: cauchy-truncated SFDEs solution map} and the strong Markov property of $Y^{\epsilon}_{t;t_0,\xi}$, for any $m>n>-t$, we have
 \begin{align}
  \E \big\| Y^{\epsilon}_{t;-m,\xi}- Y^{\epsilon}_{t;-n,\xi} \big\|_{r}^{2} & =\E \big\| Y^{\epsilon}_{t;-n,Y^{\epsilon}_{-n;-m,\xi}}-Y^{\epsilon}_{t;-n,\xi} \big\|_{r}^{2} \nonumber \\ 
  & \leq C_{6,\epsilon} \E \big\| Y^{\epsilon}_{-n;-m,\xi}-\xi \big\|_{r}^{2} e^{-\lambda_{\epsilon_0} (n+t)} \nonumber \\ 
  & \leq 2 C_{6,\epsilon}(C_{4,\epsilon}+C_{5,\epsilon}) e^{-\lambda_{\epsilon_0} (n+t)}. \label{eq: cauchy-sequence}
 \end{align}
 Thus, for any fixed $t \in \R$, $\{ Y^{\epsilon}_{t;-n,\xi} \}_{n \in \N}$ is a Cauchy sequence in $L^2(\Omega,C_r)$ with limit $Y_t^{\epsilon,\ast}$. Let $\cY^{\epsilon}(t, \omega)(\cdot) :=Y^{\epsilon,\ast}_{t}(\cdot,\omega)$, then by Proposition \ref{prop: cr} i), we obtain $\P$-a.s.
 \begin{align*}
  \cY^{\epsilon}(t,\omega)(\tau) = \lim_{n \rightarrow +\infty} Y^{\epsilon}_{t;-n,\xi}(\tau) = \lim_{n \rightarrow +\infty} Y^{\epsilon}_{t+\tau;-n,\xi}(0) = \cY^{\epsilon}(t+\tau,\omega)(0),
 \end{align*}
 for any fixed $t \in \R$ and $\tau\leq 0$. The point-wise convergence also shows that $\cY^{\epsilon}(t,\omega)(\cdot+\tau)=\cY^{\epsilon}(t+\tau,\omega)(\cdot) $. By Proposition ii), we know that $\cY^{\epsilon}(t)$ is a continuous process and $\cY^{\epsilon} \in \sC_r$. 
 Estimate \eqref{eq: cauchy-truncated SFDEs solution map} shows $\{ Y^{\epsilon}_{\cdot;-n,\xi} \}_{n}$ converge to $\cY^{\epsilon}$ in $L^2(\Omega, \sC_r)$ as $n \rightarrow +\infty$. By Lemma \ref{lem: truncated-solution-map}, $\{ Y^{\epsilon}_{t;-n,\xi} \}_{n \geq |t|}$ are $\mathcal{F}_t$ adapted, so the limit $\cY^{\epsilon}(t)=Y_t^{\epsilon,\ast}$ is $\mathcal{F}_t$ adapted. Combining the adaptivity and continuity of $\cY^{\epsilon}(t)$, we obtain that $\cY^{\epsilon}(t)$ is an $\mathcal{F}_t$ progressively measurable process.

 By the definition of the dynamical system $(U^{\epsilon},\theta)$ and the strong Markov property, for any fixed $s>-n$, we obtain that $\P$-a.s. for any $t \geq 0$, 
 \begin{align*}
  U_0^{\epsilon}(t, Y^{\epsilon}_{s;-n,\xi}(\omega),\theta(s,\omega)) & = \big\{ Y^{\epsilon}(t+\tau; 0, Y^{\epsilon}_{s;-n,\xi}(\omega))(\theta(s,\omega)) ; \, \tau \in (-\infty,0] \big\} \\
  & = \big\{ Y^{\epsilon}(t+s+\tau; -n,\xi))(\omega) ; \, \tau \in (-\infty,0] \big\}= Y^{\epsilon}_{t+s;-n,\xi}(\omega).
 \end{align*}
 By estimate \eqref{eq: cauchy-truncated SFDEs solution map}, we obtain that $U_0^{\epsilon}(t, \xi,\omega)$ is continuous about initial data $\xi \in C_r$. For fixed $s \in\R$, using this continuous dependence and the strong Markov property, we get that $\P$-a.s.
 \begin{align*}
  U_0^{\epsilon}(t, Y_s^{\epsilon,\ast}(\omega),\theta(s,\omega)) &= U_0^{\epsilon}(t, \lim_{n \rightarrow +\infty} Y^{\epsilon}_{s;-n,\xi}(\omega),\theta(s,\omega)) = \lim_{n \rightarrow +\infty} U_0^{\epsilon}(t, Y^{\epsilon}_{s;-n,\xi}(\omega),\theta(s,\omega)) \\
  & = \lim_{n \rightarrow +\infty} Y^{\epsilon}_{t+s;-n,\xi}(\omega) = Y_{t+s}^{\epsilon,\ast}(\omega), \quad \forall \, t \in \R .
 \end{align*}
 Due to $\cY^{\epsilon}(t, \omega)(\cdot)=Y^{\epsilon,\ast}_{t}(\cdot,\omega)$, we get that $U_0^{\epsilon}(t, \mathcal{Y}^{\epsilon}(s,\omega), \theta(s, \omega))=\mathcal{Y}^{\epsilon}(t+s,\omega)$ $\P$-a.s. holds. 
 On the other hand, by the definition of the Wiener shift $\theta$, we have $Y^{\epsilon}(\cdot; -n, \xi)( \theta(s,\omega))=Y^{\epsilon}(\cdot+s; -n+s, \xi)( \omega)$. So for the solution map, we obtain the following identity
 \begin{equation*}
  Y^{\epsilon}_{t; -n, \xi}(\tau, \theta(s,\omega))= Y^{\epsilon}_{t+s; -n+s, \xi}(\tau, \omega), \quad \forall \, \tau \in (-\infty,0].
 \end{equation*}
 Taking $n \rightarrow +\infty$, then $\mathcal{Y}^{\epsilon}(t+s,\omega) = Y_{t+s}^{\epsilon,\ast}(\omega)=Y_{t}^{\epsilon,\ast}(\theta(s,\omega))=\mathcal{Y}^{\epsilon}(t,\theta(s,\omega))$ $\P$-a.s. holds. Combining the above results, we proved that $\cY^{\epsilon}$ is the stationary solution for $(U^{\epsilon}, \theta)$. Furthermore, by the definition of $U_{t_0}^{\epsilon}$ and equality \eqref{ident: trans}, we get $\cY^{\epsilon}(t)$ satisfying Eq. \eqref{eq: stationary solution}. Taking $t_0 \rightarrow - \infty$, Lemma \ref{lem: truncated-solution-map} yields a uniform bound, specifically $\E \| \cY^{\epsilon}(t) \|_{r}^2 \leq C_5(\epsilon,\lambda,\xi) $ for all $t \in \R$.

 {\bf Step 2: the pathwise uniqueness.}
 If $\Tilde{\cY}^{\epsilon}$ is another stationary solution for the crude cocycle $(U^{\epsilon},\theta)$, then for any fixed $t \in \R$, we obtain 
 \begin{align*}
  \E \| \Tilde{\cY}^{\epsilon}(t,\omega)-\cY^{\epsilon}(t,\omega) \|_{r}^2 \! & = \! \E \| U_0^{\epsilon} (t+n, \Tilde{\mathcal{Y}}^{\epsilon}(-n,\omega), \theta(-n, \omega)) \! - \! U_0^{\epsilon} (t+n, \mathcal{Y}^{\epsilon}(-n,\omega), \theta(-n, \omega)) \|_{r}^2 \\
  & = \! \E \big\| Y^{\epsilon}_{t;-n,\Tilde{\cY}^{\epsilon}(-n,\omega)}-Y^{\epsilon}_{t;-n,{\cY}^{\epsilon}(-n,\omega)} \big\|_{r}^2, \quad \forall n \in \Z_{+} \cap \big[ |t|,+\infty \big).
 \end{align*}
 Analogous to estimate \eqref{eq: cauchy-sequence}, $\E \| \cY^{\epsilon}(-n,\omega) \|_{r}^2 $ is bounded and independent of $n$. Thus,
 \begin{equation*}
   \E \| \Tilde{\cY}^{\epsilon}(t,\omega)-\cY^{\epsilon}(t,\omega) \|_{r}^2 \leq 2 C_{6,\epsilon}(C_{4,\epsilon}+C_{5,\epsilon}) e^{-\lambda_{\epsilon_0} (n+t)}, \quad \forall n \in \Z_{+} \cap [|t|,+\infty).
 \end{equation*}
 Taking $n \rightarrow +\infty$, we obtain that for any $t \in \R$, $\Tilde{\cY}^{\epsilon}(t,\omega)=\cY^{\epsilon}(t,\omega)$, $\P$-a.s. holds. Applying Proposition \ref{prop: cr} ii), we get the pathwise uniqueness of the stationary solution. Note that to prove the pathwise uniqueness, we only used conditions: $\sup_{t \in \R} \E\|X(t)\|_{r}^2<+\infty$ and 
 \begin{equation*} 
   U_0^{\epsilon}(t, X(s,\omega), \theta(s, \omega))=X(t+s,\omega), \quad \forall \, t \geq 0, \quad \forall \, s \in \R, \quad \P\mbox{-a.s.}. 
 \end{equation*}
  So the uniform bound solution $X$ to Eq. \eqref{eq: stationary solution}, which naturally satisfies the above two conditions, is indeed the stationary solution for $(U^{\epsilon}, \theta)$, based on uniqueness.

 {\bf Step 3: strong stationary solution.} Eq. \eqref{eq: stationary solution} gives a constraint $\Gamma^{\epsilon}$ that determine $S_{\Gamma^{\epsilon}} \subset \mathcal{P}( \W \times \sC_r)$. We say that $\varrho \in S_{\Gamma^{\epsilon}}$ with constraint $\Gamma^{\epsilon}$, if exists $(w, X) \in \W \times \sC_r$ such that $\varrho= \P \circ (w,X)^{-1}$ with marginal distribution $\P \circ (W)^{-1} $ on $\W $ and $(X,w)$ satisfies
 \begin{align*}
  \lim_{n \rightarrow + \infty} \E \bigg[ & \bigg| X(t)(0) -X(t_0)(0) - \int_{t_0}^{t} b \big(X(s) \big) \, \d s \bigg. \bigg. \\
  \bigg. \bigg. & -\sqrt{\epsilon} \sum_{k=0}^{\lfloor n (t-t_0) \rfloor} \sigma \Big( X \Big(t_0+ \frac{k}{n} \Big) \Big) \cdot \Big( w \Big( t \wedge \big(t_0+\frac{k+1}{n} \big) \Big)-w \Big(t_0+ \frac{k}{n} \Big) \Big) \bigg| \bigg] =0,
 \end{align*}
 for any $-\infty<t_0<t<+\infty$; $X(t)(\tau)=X(t+\tau)(0)$ for all $t \in \R$ and $\tau \in (-\infty,0]$; and $\E\|X(t)\|_{r}^2=\E\|X(s)\|_{r}^2<+\infty $ for any $t,s \in \R$. It is clear that $S_{\Gamma}$ is convex. By the pathwise uniqueness and the Yamada--Watanabe Theorem (Lemma \ref{lem: Yamada-Watanabe}), there is a Borel measurable $\mathcal{G}^{\epsilon}: \W \rightarrow \sC_r$ such that $\mathcal{G}^{\epsilon}(W)$ is $\mathcal{F}_{t}$-adapted, uniform bounded and satisfies Eq. \eqref{eq: stationary solution}. Thus, $\mathcal{G}^{\epsilon}(W)$ is indeed the stationary solution for $(U^{\epsilon}, \theta)$.
\end{proof}

\section{Large Deviations Principle} \label{sec: Large Deviation}

In this section, we will use the weak converge approach to prove the uniform LDP for $\{ Y^{\epsilon}_{\cdot;t_0,\xi}\}_{\epsilon>0}$ to SFDEs \eqref{eq: truncated SFDEs epsilon} and the LDP for the stationary solution $\{\mathcal{Y}^{\epsilon}\}_{\epsilon>0}$ for $(U^{\epsilon},\theta)$. In Section \ref{subsec: skeleton equation}, we will show the well-posedness of skeleton equation \eqref{eq: skeleton equation} and rate functions $\cI_{t_0,\xi}$ have compact level sets on compacts. In Section \ref{subsec: uniform LDP}, we will show the well-posedness and compactness of controlled SFDEs, culminating in the proof of Theorem \ref{thm: uniform-LDP}. In Section \ref{subsec: LDP for stationary solution}, through the convergence of controlled SFDEs, we will prove Theorem \ref{thm: LDP-stationary solution}.

The weak converge approach is based on the variational representation and the existence of strong solutions. The variational representation for a certain functional of double-side Wiener process $W$ is similar to the functional of the Brownian motion, see details in Lemma \ref{lem: variational representation}. The existence of strong solution has been shown in Lemma \ref{lem: truncated-measurable-map} and Theorem \ref{thm: stationary solution}. Thus, following \cite[Theorem 4.4]{Budhiraja_variational_2000} and \cite[Theorem 5]{Budhiraja_Large_2008}, there are sufficient conditions of the (uniform) Laplace principle. We will list and verify these conditions in Section \ref{subsec: uniform LDP} and \ref{subsec: LDP for stationary solution}.

Before that, we need to introduce some notions used in this section. Let $\mathcal{A}$ denote the Cameron–Martin space on $\R$, that is,
\begin{equation} \label{def: variational space}
 \cA:= \big\{ v: \; v \; \mbox{is} \; \R^m\mbox{-valued predictable processes and} \int_{-\infty}^{+\infty} |v(s)|^2 \d s < + \infty,\, \P\mbox{-}a.s. \big\}.
\end{equation}
For any $v \in \cA$, we set the map $\int_{0}^{\cdot} v(s) \d s: \R \rightarrow \R^{m}$ as 
\begin{equation*}
 t \in \R \mapsto \begin{cases}
  \int_{0}^{t} v(s) \d s, & \quad \mbox{when} \quad t \geq 0; \\
  - \int_{t}^{0} v(s) \d s, & \quad \mbox{when} \quad t < 0.
 \end{cases}
\end{equation*}
For fixed $M>0$, we define $S_M:= \{v \in L^2(\R; \R^m); \, \int_{-\infty}^{+\infty} |v(s)|^2 \d s < M \}$ and $\cA_{M} := \{ v \in \cA ; \, v(\cdot)(\omega) \in S_M \} $. Observe that $S_{M}$ is a compact Polish space under the weak topology in $L^2(\R; \R^{m})$ and $S_{M}$ will permanently be endowed with this topology throughout the paper. 

\subsection{Skeleton equations} \label{subsec: skeleton equation}

In this subsection, we will consider the skeleton equations \eqref{eq: skeleton equation} and \eqref{eq: skeleton equation 0}, which give the rate functions corresponding to solution maps $\{ Y^{\epsilon}_{\cdot;t_0,\xi}\}_{\epsilon>0}$ and stationary solution $\{\mathcal{Y}^{\epsilon}\}_{\epsilon>0}$. Firstly, we will give the estimates of the skeleton equations.

\begin{lemma} \label{lem: well-posedness-skeleton-equation}
 For any $t_0 \in \R$, $\xi \in C_r$ and $v \in S_{M}$, Eq. \eqref{eq: skeleton equation} admits a unique solution map $Y^{0,v}_{t;t_0,\xi}$. For $\lambda \in(0,(2\lambda_{1}-2\lambda_{2}\mu_{1}^{(2r)}-\lambda_{3}\mu_{2}^{(2r)}) \wedge 2r)$, there is a constant $C(\lambda)>0$ such that
 \begin{equation} \label{eq: estimate-skeleton-1}
  \big\|Y^{0,v}_{t;t_0,\xi} \big\|_{r}^{2} \leq 2 \, e^{-\lambda (t-t_0)+2 M} \, \| \xi \|_{r}^2 + e^{2 M} \, C(\lambda) , \quad \forall \, t \geq t_0;
 \end{equation}
 and for any $\xi_1,\xi_2 \in C_r$, we have
 \begin{equation} \label{eq: estimate-skeleton-2}
  \big\|Y^{0,v}_{t;t_0,\xi_1}- Y^{0,v}_{t;t_0,\xi_2}\big\|_{r}^{2} \leq 2 \, e^{-\lambda (t-t_0)+ M}\, \| \xi_1-\xi_2 \|_{r}^2 .
 \end{equation}
 $\{Y^{0,v}_{\cdot; -n, \xi}\}_{n \in \N}$ is a Cauchy sequence in $\sC_r$ and the limit $\cY^{0,v,\ast}$ satisfies Eq. \eqref{eq: skeleton equation 0}.
\end{lemma}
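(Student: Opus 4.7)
The overall plan is: (i) obtain existence and uniqueness by a Picard iteration combined with the a priori bound \eqref{eq: estimate-skeleton-1}; (ii) derive \eqref{eq: estimate-skeleton-1} and \eqref{eq: estimate-skeleton-2} by parallel energy estimates, using the dissipative condition (H2) to extract $-2\lambda_1|Y|^2$ from the drift and Young's inequality to handle the $v$-driven term; (iii) deduce the Cauchy property of $\{Y^{0,v}_{\cdot;-n,\xi}\}_{n}$ from \eqref{eq: estimate-skeleton-2} together with a deterministic flow identity, mirroring Step~1 in the proof of Theorem \ref{thm: stationary solution}. Local existence and uniqueness follow from (H1), (H3), and the local integrability of $s \mapsto \sigma(\phi_s) v(s)$ for continuous $C_r$-valued $\phi$ and $v \in L^2(\R;\R^m)$, via a standard contraction on short intervals.

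For \eqref{eq: estimate-skeleton-1}, write $Y(t):= Y^{0,v}_{t;t_0,\xi}(0)$ and differentiate $|Y(t)|^2$. Using (H2) with $\phi=0$, (H3) with $\phi=0$ to bound $|\sigma(Y_t^{0,v})|^2$, the split $2Y\cdot \sigma v \le |Y|^2 + |\sigma|^2 |v|^2$, and the pointwise bound $\int_{-\infty}^0 |Y_t^{0,v}(\tau)|^2 \mu_i(\d\tau) \le \mu_i^{(2r)} \|Y_t^{0,v}\|_r^2$, one arrives at an inequality of the form
\begin{equation*}
    \tfrac{\d}{\d t}|Y(t)|^2 \le -\alpha\, |Y(t)|^2 + \bigl(2\lambda_2 \mu_1^{(2r)} + 2\lambda_3\mu_2^{(2r)}|v(t)|^2\bigr)\|Y_t^{0,v}\|_r^2 + C\bigl(1+|v(t)|^2\bigr),
\end{equation*}
for some $\alpha>0$. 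The spectral-gap assumption $\lambda < (2\lambda_1 - 2\lambda_2\mu_1^{(2r)} - \lambda_3\mu_2^{(2r)}) \wedge 2r$ allows the $\|Y_t^{0,v}\|_r^2$ term to be absorbed via the weighted variable $F(t):=\sup_{s\le t} e^{2rs}|Y(s)|^2$, which satisfies $\|Y_t^{0,v}\|_r^2 = e^{-2rt}\max\bigl(F(t),\, e^{2rt_0}\|\xi\|_r^2\bigr)$. Applying Gronwall to $F(t) e^{\lambda t}$ then produces a multiplier $\exp\bigl(C \int_\R |v(s)|^2\d s\bigr) \le e^{CM}$, which after accounting for both the initial data and the $|v|^2$-forcing yields \eqref{eq: estimate-skeleton-1} with its $e^{2M}$ factor; global existence is a byproduct.

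The contraction estimate \eqref{eq: estimate-skeleton-2} is obtained by the same computation applied to $\Delta(t) := Y^{0,v}_{t;t_0,\xi_1}(0) - Y^{0,v}_{t;t_0,\xi_2}(0)$: since (H2) and (H3) are stated on differences, no $b(0)$ or $\sigma(0)$ constants appear, the inequality becomes homogeneous in $\|\Delta_t\|_r^2$, and only a single $e^M$ factor emerges from Gronwall. For the Cauchy property, fix $t\in\R$ and for $m>n>-t$ apply \eqref{eq: estimate-skeleton-2} with $\xi_1=\xi$, $\xi_2 = Y^{0,v}_{-n;-m,\xi}$, using the deterministic flow identity $Y^{0,v}_{t;-m,\xi} = Y^{0,v}_{t;-n,\, Y^{0,v}_{-n;-m,\xi}}$ (valid by uniqueness) together with the uniform bound from \eqref{eq: estimate-skeleton-1}, to get
\begin{equation*}
    \bigl\|Y^{0,v}_{t;-m,\xi} - Y^{0,v}_{t;-n,\xi}\bigr\|_r^2 \le C(M,\xi,\lambda)\, e^{-\lambda(t+n)},
\end{equation*}
which is summable against the weights of $\vertiii{\cdot}_r$, so $\{Y^{0,v}_{\cdot;-n,\xi}\}_n$ is Cauchy in $\sC_r$. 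The limit $\cY^{0,v,\ast}$ satisfies \eqref{eq: skeleton equation 0} by passing to the limit in the integral form of \eqref{eq: skeleton equation} with $t_0=-n$, using the continuity of $b,\sigma$ and dominated convergence. The main obstacle is the Gronwall-with-delay step: the weighted quantity $F(t)e^{\lambda t}$ must simultaneously absorb the delay-weighted $\|Y_t^{0,v}\|_r^2$ into $|Y(t)|^2$ via the spectral gap and keep the $|v(t)|^2$-dependent coefficient integrable so its effect on the exponential is controlled by $M$ rather than by $\sqrt{M}\sqrt{t-t_0}$, which is what a naive bound using $|v(t)|$ (Cauchy--Schwarz) in place of $|v(t)|^2$ would produce and would destroy the uniform-in-$t$ estimate.
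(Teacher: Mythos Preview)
Your energy-estimate strategy is right in spirit, but the way you handle the memory term does not close. The crucial step you propose is to replace the delay integral by the crude pointwise bound $\int_{-\infty}^{0}|Y^{0,v}_{t}(\tau)|^{2}\mu_i(\d\tau)\le \mu_i^{(2r)}\|Y^{0,v}_{t}\|_r^{2}$ and then absorb $\|Y^{0,v}_{t}\|_r^{2}$ via the running maximum $F(t)=\sup_{s\le t}e^{2rs}|Y(s)|^{2}$. The problem is that the drift produces a term $2\lambda_2\mu_1^{(2r)}\|Y^{0,v}_{t}\|_r^{2}$ with a \emph{constant} (not $|v|^2$-weighted) coefficient; after passing to $F$ (or to $G(t)=\sup_{s\le t}e^{\lambda s}|Y(s)|^2$) this becomes $\int_{t_0}^{t}2\lambda_2\mu_1^{(2r)}G(s)\,\d s$, and any Gr\"onwall argument on $G$ then yields a factor $e^{2\lambda_2\mu_1^{(2r)}(t-t_0)}$, destroying the uniform-in-$t$ bound you need for \eqref{eq: estimate-skeleton-1}, \eqref{eq: estimate-skeleton-2}, and hence for the Cauchy property as $n\to\infty$. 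The negative term $(\lambda-\alpha)\int e^{\lambda s}|Y(s)|^2\,\d s$ cannot cancel this, because $e^{\lambda s}|Y(s)|^2\le G(s)$ goes the wrong way. Your closing paragraph correctly flags the danger of getting $\sqrt{M(t-t_0)}$ from a bad split of the $v$-term, but misses this second, fatal, source of $t$-growth.

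The paper avoids this by \emph{not} using the pointwise bound. Instead it keeps the double integral $\int_{t_0}^{t}e^{\lambda s}\int_{-\infty}^{0}|Y_s(\tau)|^2\mu_i(\d\tau)\,\d s$ intact and applies Lemma~\ref{lem: estimate for mu} (a Fubini-type estimate), which converts it into $\frac{\mu_i^{(2r)}}{2r-\lambda}\|\xi\|_r^2+\mu_i^{(2r)}\int_{t_0}^t e^{\lambda s}|Y(s)|^2\,\d s$. Now the memory term feeds back only into the \emph{pointwise} integral $\int e^{\lambda s}|Y(s)|^2\,\d s$, whose total coefficient $\lambda-2\lambda_1+2\lambda_2\mu_1^{(2r)}+\lambda_3\mu_2^{(2r)}+\epsilon_1$ is negative by the spectral-gap hypothesis and can simply be dropped. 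What remains is a Gr\"onwall inequality for $e^{\lambda(t-t_0)}|Y(t)|^2$ with the sole coefficient $2|v(s)|^2$, giving exactly the $e^{2M}$ (resp.\ $e^{M}$) factor. A related point: your Young split $2Y\cdot\sigma v\le|Y|^2+|\sigma|^2|v|^2$ places $|v|^2$ on the segment-valued quantity $|\sigma(Y_t)|^2$, whereas the paper uses $2Y\cdot\sigma v\le |v|^2|Y|^2+|\sigma|^2$ so that $|v|^2$ sits on the pointwise $|Y|^2$ and the segment term $|\sigma|^2$ carries only a constant coefficient---precisely what is needed for Lemma~\ref{lem: estimate for mu} to apply cleanly.
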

\begin{proof}
 The proof of the local well-posedness is classical, we omit it. To get the global well-posedness of Eq. \eqref{eq: skeleton equation}, we need to give a priori estimate of $Y^{0,v}_{t;t_0,\xi}$. For $t \geq t_0$, we have
 \begin{align*}
  e^{\lambda (t-t_0)} |Y^{0,v}(t; t_0, \xi)|^2 & = |\xi(0)|^2+ \int_{t_0}^{t} e^{\lambda (s-t_0)} \big( \lambda \, |Y^{0,v}(s; t_0, \xi)|^{2}+ 2 \, Y^{0,v}(s; t_0, \xi)^{\prime} \cdot b(Y^{0,v}_{s;t_0,\xi}) \big) \, \d s \\
  & \quad + 2 \int_{t_0}^{t} e^{\lambda (s-t_0)} \, Y^{0,v}(s; t_0, \xi)^{\prime} \cdot \sigma(Y^{0,v}_{s;t_0,\xi}) \cdot v(s) \, \d s. 
 \end{align*}
 Following \cite[Theorem 3.2]{Wu_Stochastic_2017}, we apply Cauchy--Schwarz inequality to obtain 
 \begin{align*}
  & e^{\lambda (t-t_0)} |Y^{0,v}(t; t_0, \xi)|^2 \\
  & \leq |\xi(0)|^2+ \int_{t_0}^{t} e^{\lambda (s-t_0)} \Big( \lambda \, |Y^{0,v}(s; t_0, \xi)|^{2}+ 2 Y^{0,v}(s; t_0, \xi)^{\prime} \cdot \big( b(Y^{0,v}_{s;t_0,\xi})-b(0)\big) \Big) \, \d s \\
  & \quad + \int_{t_0}^{t} e^{\lambda (s-t_0)} \big( \epsilon_1 \, |Y^{0,v}(s; t_0, \xi)|^{2} + \frac{1}{\epsilon_1} |b(0)|^2 \big) \, \d s + 2 \int_{t_0}^{t} e^{\lambda (s-t_0)} \, |v(s)|^2 \, |Y^{0,v}(s; t_0, \xi)|^2 \, \d s \\
  & \quad + \int_{t_0}^{t} e^{\lambda (s-t_0)} \, \Big( |\sigma(Y^{0,v}_{s;t_0,\xi}) -\sigma(0)|^2+ |\sigma(0)|^2 \Big)\, \d s,
 \end{align*}
 where $0<\epsilon_1< 2\lambda_{1}-2\lambda_{2}\mu_{1}^{(2r)}-\lambda_{3}\mu_{2}^{(2r)}-\lambda $. Assumptions (H2-H3) and Lemma \ref{lem: estimate for mu} yield
 \begin{align*}
   e^{\lambda (t-t_0)} |Y^{0,v}(t; t_0, \xi)|^2 & \leq |\xi(0)|^2+ \int_{t_0}^{t} e^{\lambda (s-t_0)} \big(-2\lambda_{1}+ \lambda+2\lambda_{2}\mu_{1}^{(2r)}+\lambda_{3}\mu_{2}^{(2r)}+\epsilon_1 \big) \, |Y^{0,v}(s; t_0, \xi)|^{2}\, \d s \\
  & \quad + 2 \int_{t_0}^{t} e^{\lambda (s-t_0)} \, |v(s)|^2 \, |Y^{0,v}(s; t_0, \xi)|^2 \, \d s + \int_{t_0}^{t} e^{\lambda (s-t_0)} \, \Big( \frac{1}{\epsilon_1} |b(0)|^2+ |\sigma(0)|^2 \Big)\, \d s.
 \end{align*}
 Observe that the first term of the right hand of the above inequality is negative. Applying the Gr\"onwall inequality for $e^{\lambda (t-t_0)} |Y^{0,v}(t; t_0, \xi)|^2$, we obtain that it is less than
 \begin{equation*} 
   \Big( |\xi(0)|^2 + \frac{1}{\lambda} \big( e^{\lambda (t-t_0)}-1 \big) \, \big( \frac{1}{\epsilon_1} |b(0)|^2+ |\sigma(0)|^2 \big) \Big) \times \exp \Big\{ 2 \int_{t_0}^{t} |v(s)|^2 \, \d s \Big\},
 \end{equation*}
 for any $t \geq t_0$. Note that $\lambda<2r$, applying Proposition \ref{prop: cr} iii), we have
 \begin{equation*}
  \big\|Y^{0,v}_{t;t_0,\xi} \big\|_{r}^{2} \leq e^{-\lambda (t-t_0)} \, \| \xi \|_{r}^2 + \Big( e^{-\lambda (t-t_0)} \, | \xi(0) | + C(\lambda) \Big) \times \exp \Big\{ 2 \int_{t_0}^{t} |v(s)|^2 \, \d s \Big\}, 
 \end{equation*}
 where $C(\lambda) = \frac{1}{\lambda} \big( \frac{1}{\epsilon_1} |b(0)|^2+ |\sigma(0)|^2 \big) $. Using the fact $v \in S_M$, we get a priori estimate \eqref{eq: estimate-skeleton-1}. So we get the existence of the solution map to skeleton equation \eqref{eq: skeleton equation}. 
 
 For any $\xi_1,\xi_2 \in C_r$, let $\eta(t)=Y^{0,v}(t; t_0, \xi_1)-Y^{0,v}(t; t_0, \xi_2) $, we have 
 \begin{align*}
  e^{\lambda (t-t_0)} |\eta(t)|^2 & = |\xi_1(0)-\xi_2(0)|^2+ \int_{t_0}^{t} e^{\lambda (s-t_0)} \big( \lambda \, |\eta(t)|^{2}+ 2 \eta(s)^{\prime} \cdot \big( b(Y^{0,v}_{s;t_0,\xi_1})-b(Y^{0,v}_{s;t_0,\xi_2}) \big) \big) \, \d s \\
  & \quad + 2 \int_{t_0}^{t} e^{\lambda (s-t_0)} \, \eta(s)^{\prime} \cdot \big( \sigma(Y^{0,v}_{s;t_0,\xi_1}) - \sigma(Y^{0,v}_{s;t_0,\xi_2})\big) \cdot v(s) \, \d s. 
 \end{align*}
 Similar to the proof estimate \eqref{eq: estimate-skeleton-1}, using assumptions (H2) and (H3), Cauchy--Schwarz inequality and Gr\"onwall inequality, we obtain estimate \eqref{eq: estimate-skeleton-2}. It implies the uniqueness of solution map $Y^{0,v}_{t;t_0,\xi}$. So we finished the proof of the well-posedness of skeleton Eq. \eqref{eq: skeleton equation}.
 
 Following the proof of Theorem \ref{thm: stationary solution}, the above estimates \eqref{eq: estimate-skeleton-1}, \eqref{eq: estimate-skeleton-2} and the dominated converge theorem yield that $\{Y^{0,v}_{\cdot; -n, \xi}\}_{n \in \N}$ is a Cauchy sequence in $\sC_r$ and the limit elements $\mathcal{Y}^{0,v,\ast}(t):= \lim_{n \rightarrow +\infty} Y^{0,v}_{t; -n, \xi} $ satisfies the skeleton equation \eqref{eq: skeleton equation 0}.
\end{proof}

Let ${\mathcal G}^{0}_{t_0,\xi} (\int_{0}^{\cdot} v(s) \d s)$ and ${\mathcal G}^{0}_{-\infty} (\int_{0}^{\cdot} v(s) \d s)$ be the solution to skeleton equations \eqref{eq: skeleton equation} and \eqref{eq: skeleton equation 0}, respectively.
Recall the definitions of $\cI_{t_0,\xi}$ and $\cI_{-\infty}$ in Section \ref{sec: Preliminary},
\begin{align*}
 \cI_{t_0,\xi} (\Phi) &:= \inf_{\big\{ v \in L^{2}(\R ; \R^m); \; \Phi ={\mathcal G}^{0}_{t_0,\xi} (\int_{0}^{\cdot} v(s) \d s) \big\}} \Big\{\frac{1}{2} \int_{-\infty}^{+\infty} |v(s)|^{2} \d s \Big\}; \\
 \cI_{-\infty}(\Phi) & := \inf_{\big\{ v \in L^{2}(\R ; \R^m); \; \Phi ={\mathcal G}^{0}_{-\infty} (\int_{0}^{\cdot} v(s) \d s) \big\}} \Big\{\frac{1}{2} \int_{-\infty}^{+\infty} |v(s)|^{2} \d s \Big\},
\end{align*}
where the infimum over the empty set is taken to be $+\infty$. We will show $\cI_{t_0,\xi}$ and $\cI_{-\infty}$ are good rate functions. Analogous to \cite[Lemma 3.4]{Liu_Large_2020} and \cite[Lemma 4.1]{Mo_Large_2013}, the key point is to prove that $G_{t_0,\xi}: v \mapsto Y^{0,v}_{\cdot;t_0,\xi}$ is continuous mapping from $S_M$ to $\sC_r$. This continuity also implies that both $\mathcal{G}^{0}_{t_0,\xi}$ and $\mathcal{G}^0$ are measurable map.

\begin{lemma} \label{lem: proposition-skeleton-equation-1}
 For each $M>0$, $\xi \in C_r$ and fixed $t_0 \in \R$, the map $G_{t_0,\xi}: v \mapsto Y^{0,v}_{\cdot;t_0,\xi}$ is continuous from $S_M$ to $\sC_r$. Consequently, the following sets 
 \begin{equation*}
  \Gamma_{M;t_0,\xi}:= \Big\{ \mathcal{G}^{0}_{t_0,\xi} (\int_{0}^{\cdot} v(s) \d s) \in \sC_r: \, v \in S_M \Big\}; \quad \Gamma_{M;-\infty} := \Big\{ \mathcal{G}^{0} (\int_{0}^{\cdot} v(s) \d s) \in \sC_r: \, v \in S_M \Big\},
 \end{equation*}
 are compact subsets of $\sC_r$. So $\cI_{t_0,\xi}(\Phi)$ and $\cI_{-\infty}(\Phi)$ are good rate functions. 
 Furthermore, for each $t_0 \in \R$ and compact set $K \subset C_r$, the set $\Gamma_{M;t_0,K}:=\cup_{\xi \in K} \Gamma_{M;t_0,\xi}$ is compact. 
\end{lemma}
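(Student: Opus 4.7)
The plan is to establish continuity of $G_{t_0,\xi}:S_M \to \sC_r$ first, and then to deduce all remaining assertions. For $v_n \to v$ weakly in $S_M$, I will show $Y^{0,v_n}_{\cdot;t_0,\xi} \to Y^{0,v}_{\cdot;t_0,\xi}$ in $\sC_r$ via a compactness-plus-uniqueness argument. The a priori bound \eqref{eq: estimate-skeleton-1} gives uniform boundedness of $\{Y^{0,v_n}\}_n$ on every compact interval $[t_0,T]$; together with (H1) and (H3) this yields uniform bounds on $|b(Y^{0,v_n}_s)|$ and $|\sigma(Y^{0,v_n}_s)|$, and Cauchy--Schwarz combined with $\|v_n\|_{L^2}^2\leq 2M$ then gives H\"older equicontinuity of $\{Y^{0,v_n}\}_n$ on $[t_0,T]$. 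Arzel\`a--Ascoli and a diagonal argument extract a subsequence $Y^{0,v_{n_k}} \to Y^*$ uniformly on every compact subset of $\R$ (on $(-\infty,t_0]$ the functions are identically $\xi(\cdot-t_0)$, so nothing is lost).

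The main obstacle will be identifying $Y^*$ as $Y^{0,v}_{\cdot;t_0,\xi}$, specifically passing to the limit in the control term when $v_{n_k}$ converges only weakly. The drift integral converges by continuity of $b$ and dominated convergence; for the control term I will split
\begin{equation*}
\int_{t_0}^t \sigma(Y^{0,v_{n_k}}_s) v_{n_k}(s) \, \d s = \int_{t_0}^t \bigl[\sigma(Y^{0,v_{n_k}}_s)-\sigma(Y^*_s)\bigr] v_{n_k}(s)\, \d s + \int_{t_0}^t \sigma(Y^*_s) v_{n_k}(s)\, \d s,
\end{equation*}
where the first piece tends to $0$ by Cauchy--Schwarz, the global Lipschitz continuity of $\sigma$ from (H3), and $\|v_{n_k}\|_{L^2}\leq \sqrt{2M}$, while the second tends to $\int_{t_0}^t \sigma(Y^*_s) v(s)\, \d s$ because $s \mapsto \sigma(Y^*_s)\mathbf{1}_{[t_0,t]}(s)$ lies in $L^2(\R;\R^{d\times m})$ and $v_{n_k} \to v$ weakly. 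Thus $Y^*$ satisfies skeleton equation \eqref{eq: skeleton equation} with control $v$ and initial data $\xi$, so by the uniqueness in Lemma \ref{lem: well-posedness-skeleton-equation} it equals $Y^{0,v}_{\cdot;t_0,\xi}$, and the usual subsequence argument upgrades this to convergence of the full sequence. Finally, the uniform convergence on compact time intervals is transferred to $\vertiii{\cdot}_r$-convergence by dominated convergence applied to the defining series of $\vertiii{\cdot}_r$, using the uniform a priori bound to control the tail.

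Compactness of $\Gamma_{M;t_0,\xi}=G_{t_0,\xi}(S_M)$ is then immediate from compactness of $S_M$. For the good rate function property, I will note that the infimum in the definition of $\cI_{t_0,\xi}$ is attained (using weak compactness of $\{v:\|v\|_{L^2}^2\leq 2c\}$ and the continuity of $G_{t_0,\xi}$), so each level set $\{\cI_{t_0,\xi}\leq c\}$ equals $G_{t_0,\xi}(\{v:\|v\|_{L^2}^2\leq 2c\})$, hence is compact. For $\Gamma_{M;-\infty}$ and $\cI_{-\infty}$ I will upgrade the continuity of $v \mapsto Y^{0,v}_{\cdot;-N,\xi}$ to continuity of $v \mapsto \mathcal{Y}^{0,v,\ast}$ via an $\varepsilon/3$ argument: estimate \eqref{eq: estimate-skeleton-2}, combined with the semigroup property of the skeleton flow, shows that $\vertiii{\mathcal{Y}^{0,v_n,\ast} - Y^{0,v_n}_{\cdot;-N,\xi}}_r$ is small uniformly in $n$ once $N$ is large, while Lemma \ref{lem: well-posedness-skeleton-equation} gives $Y^{0,v}_{\cdot;-N,\xi} \to \mathcal{Y}^{0,v,\ast}$ as $N \to \infty$. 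For $\Gamma_{M;t_0,K}$ it suffices to establish joint continuity of $(v,\xi) \mapsto Y^{0,v}_{\cdot;t_0,\xi}$ from $S_M \times C_r$ to $\sC_r$; estimate \eqref{eq: estimate-skeleton-2} renders the $\xi$-dependence Lipschitz uniformly in $v \in S_M$, which combined with the already established continuity in $v$ yields the claim, making $\Gamma_{M;t_0,K}$ the continuous image of the compact set $S_M \times K$.
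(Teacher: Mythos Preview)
Your proof is correct, but the route you take for the continuity of $G_{t_0,\xi}$ differs from the paper's. You argue by compactness plus uniqueness: Arzel\`a--Ascoli on the $\R^d$-valued trajectories extracts a uniformly convergent subsequence, and then you identify the limit by passing to the limit in the integral equation, splitting the control term so that weak convergence of $v_{n_k}$ can be used directly against the fixed test function $\sigma(Y^\ast_s)$. The paper instead runs a direct energy estimate on $\eta^{(n)} = Y^{0,v_n}-Y^{0,v}$: after Gr\"onwall it reduces matters to showing $\sup_{s\le t}|R_n(s)|\to 0$ where $R_n(t)=2\int_{t_0}^t e^{\lambda(s-t_0)}\eta^{(n)}(s)'\sigma(Y^{0,v}_s)(v_n(s)-v(s))\,\d s$, and then uses an integration-by-parts (Newton--Leibniz) trick to rewrite $R_n$ in terms of $\zeta^{(n,\lambda)}(t)=2\int_{t_0}^t e^{\lambda(s-t_0)}\sigma(Y^{0,v}_s)(v_n(s)-v(s))\,\d s$, which tends to zero uniformly by Arzel\`a--Ascoli applied to the primitive. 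Your approach is arguably more transparent because it avoids this integration-by-parts manoeuvre and the circularity of having $\eta^{(n)}$ inside $R_n$; the paper's approach, on the other hand, is quantitative and never passes through a subsequence. For $\Gamma_{M;-\infty}$ and $\Gamma_{M;t_0,K}$ your $\varepsilon/3$ and joint-continuity arguments are essentially what the paper does (the paper carries out the $\varepsilon/3$ decomposition at each fixed $t$ and then appeals to dominated convergence for the series defining $\vertiii{\cdot}_r$, just as you do).
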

\begin{proof}
 Firstly, we will prove the map $G_{t_0,\xi}: v \mapsto Y^{0,v}_{\cdot;t_0,\xi}$ is continuous from $S_M$ to $\sC_r$. Let $\{ v_{n}\}_{n} \subset S_{M}$ be the sequence, and $\{v_{n}\}_{n}$ converge to $v $ in $ S_M$ as $n \rightarrow +\infty$. Then 
 \begin{align*}
  e^{\lambda (t-t_0)} \big| \eta^{(n)}(t) \big|^2 & = \int_{t_0}^{t} e^{\lambda (s-t_0)} \big( \lambda \, |\eta^{(n)}(s)|^{2}+ 2 \, \eta^{(n)}(s)^{\prime} \cdot \big( b(Y^{0,v_{n}}_{s;t_0,\xi})-b(Y^{0,v}_{s;t_0,\xi}) \big) \big) \, \d s \\
  & \quad + 2 \int_{t_0}^{t} e^{\lambda (s-t_0)} \, \eta^{(n)}(s)^{\prime} \cdot \big( \sigma(Y^{0,v_{n}}_{s;t_0,\xi}) - \sigma(Y^{0,v}_{s;t_0,\xi})\big) \cdot v_{n}(s) \, \d s \\
  & \quad + 2 \int_{t_0}^{t} e^{\lambda (s-t_0)} \, \eta^{(n)}(s)^{\prime} \cdot \sigma(Y^{0,v}_{s;t_0,\xi}) \cdot (v_{n}(s)-v(s)) \, \d s,
 \end{align*}
 where $\eta^{(n)}(t) := Y^{0,v_{n}}(t;t_0,\xi)-Y^{0,v}(t;t_0,\xi) $ and $\lambda \in(0,(2\lambda_{1}-2\lambda_{2}\mu_{1}^{(2r)}-\lambda_{3}\mu_{2}^{(2r)}) \wedge 2r)$. Let $R_n(t)$ be the third term in the right-hand of the above identity. By Cauchy--Schwarz inequality, 
 \begin{align*}
  e^{\lambda (t-t_0)} \big| \eta^{(n)}(t) \big|^2 & \leq \int_{t_0}^{t} e^{\lambda (s-t_0)} \big( \lambda \, |\eta^{(n)}(s)|^{2}+ 2 \, \eta^{(n)}(s)^{\prime} \cdot \big( b(Y^{0,v_{n}}_{s;t_0,\xi})-b(Y^{0,v}_{s;t_0,\xi}) \big) \big) \, \d s \\
  & \quad + \int_{t_0}^{t} e^{\lambda (s-t_0)} \, \Big(\big| \sigma(Y^{0,v_{n}}_{s;t_0,\xi}) - \sigma(Y^{0,v}_{s;t_0,\xi})\big|^2 + |\eta^{(n)}(s) |^2 |v_{n}(s)|^2 \Big) \, \d s +R_n(t). 
 \end{align*}
 Using assumptions (H2) and (H3), and applying Gr\"onwall inequality, we obtain 
 \begin{equation*}
  e^{\lambda (t-t_0)} \big| \eta^{(n)}(t) \big|^2 \leq \exp \Big\{ \int_{t_0}^{t} |v_n(s)|^2 \, \d s \Big\} \times \sup_{t_0 <s<t} |R_n(s)| \leq e^{M} \sup_{t_0 <s<t} |R_n(s)|.
 \end{equation*}
 Note that $\lambda<2r$, applying Proposition \ref{prop: cr} iii), we have
 \begin{equation*}
  \big\|Y^{0,v_{n}}_{t;t_0,\xi}-Y^{0,v}_{t;t_0,\xi} \big\|_{r}^{2} \leq e^{-\lambda (t-t_0)+M} \,\sup_{t_0 <s<t} |R_n(s)|.
 \end{equation*}
 Recall the definition of metric $d_r$, to prove the convergence of $\{G_{t_0,\xi}(v_n) := Y^{0,v_n}_{\cdot;t_0,\xi}\}_{n \geq 1}$ in $\sC_r$, we only need to prove $\lim\limits_{n \rightarrow +\infty} \, \sup_{s \in [t_0,t]} | R_n(s) |=0$. Let
 \begin{equation*}
  \zeta^{(n,\lambda)}(t) := 2 \int_{t_0}^{t} e^{\lambda (s-t_0)} \, \sigma(Y^{0,v}_{s;t_0,\xi}) \cdot (v_{n}(s)-v(s)) \, \d s, \quad \forall \, t \geq t_0.
 \end{equation*}
 Due to $|\sigma(Y^{0,v}_{s;t_0,\xi})|$ is uniform bounded and the fact that $v_n$ converge to $v$ in $S_M$, using the Arz\'ela--Ascoli theorem, we obtain $\zeta^{(n,\lambda)}$ converge to zero in $C([t_0,T]; \R^{d})$. It implies that
 \begin{equation*}
  \lim_{n \rightarrow +\infty} \, \sup_{t \in [t_0,T]} | \zeta^{(n,\lambda)}(t) |=0, \quad \forall \, T >t_0.
 \end{equation*} 
 By Newton-Leibniz formula, we have
 \begin{align*}
  R_n(t) & = \eta^{(n)}(t)^{\prime} \cdot \zeta^{(n,\lambda)}(t)-\int_{t_0}^{t} \big( b(Y^{0,v_{n}}_{s;t_0,\xi})-b(Y^{0,v}_{s;t_0,\xi}) \big)^{\prime} \cdot \zeta^{(n,\lambda)}(s) \, \d s \\
  & \quad - \int_{t_0}^{t} \big( \sigma(Y^{0,v_{n}}_{s;t_0,\xi}) \cdot v^{n}(s)-\sigma(Y^{0,v}_{s;t_0,\xi}) \cdot v(s) \big)^{\prime} \cdot \zeta^{(n,\lambda)}(s) \, \d s.
 \end{align*}
Applying assumptions (H1), (H3) and estimate \eqref{eq: estimate-skeleton-1}, we obtain that $b(Y^{0,v_{n}}_{t;t_0,\xi})$ and $\sigma(Y^{0,v_{n}}_{t;t_0,\xi})$ is uniform bounded. Thus, the convergence of $\zeta^{(n,\lambda)}$ implies $\lim\limits_{n \rightarrow +\infty} \, \sup_{s \in [t_0,t]} | R_n(s) |=0$. As the above discussions, we get that the map $G_{t_0,\xi}$ is continuous from $S_M$ to $\sC_r$. By the compactness of $S_M$ and the continuity of $G_{t_0,\xi}$, we obtain that $\Gamma_{M;t_0,\xi}:= \big\{ \mathcal{G}^{0}_{t_0,\xi} (\int_{0}^{\cdot} v(s) \d s): \, v \in S_M \big\}$ is compact. For each compact set $K \subset C_r$, using estimate \eqref{eq: estimate-skeleton-2} and the compactness of $\Gamma_{M;t_0,\xi}$, we get that $\Gamma_{M;t_0,K}:=\cup_{\xi \in K} \Gamma_{M;t_0,\xi}$ is also compact. Due to the level set
 \begin{equation*}
  \{\Phi \in \sC_r: \cI_{t_0,\xi}(\Phi) \leq M \}=\bigcap_{n \geq 1} \Gamma_{2M+\frac{1}{n};t_0,\xi}
 \end{equation*}
 is compact, the rate functions $\cI_{t_0,\xi}$ is a good rate function.

 Next, we will prove that $\Gamma_{M;-\infty} := \big\{ \mathcal{G}^{0} (\int_{0}^{\cdot} v(s) \d s) : \, v \in S_M \big\}$ is compact. By the dominated convergence theorem, we only need to prove that for each $t \in \R$, if $v_n $ convergence to $ v$ in $S_M$, then $\lim\limits_{n \rightarrow +\infty} \| \mathcal{Y}^{0,v_n,\ast}(t)-\mathcal{Y}^{0,v,\ast}(t)\|_r = 0$. Due to $\mathcal{Y}^{0,v_n,\ast}$ and $\mathcal{Y}^{0,v_n,\ast}$ satisfy \eqref{eq: skeleton equation 0}, we obtain
 \begin{align*}
  & \| \mathcal{Y}^{0,v_n,\ast}(t)-\mathcal{Y}^{0,v,\ast}(t)\|_r =\| Y^{0,v_n}_{t;-N,\mathcal{Y}^{0,v_n,\ast}(-N)}-Y^{0,v}_{t;-N,\mathcal{Y}^{0,v,\ast}(-N)}\|_r \\
  & \quad \leq \| Y^{0,v_n}_{t;-N,\mathcal{Y}^{0,v_n,\ast}(-N)}-Y^{0,v_n}_{t;-N,\mathcal{Y}^{0,v,\ast}(-N)}\|_r +\| Y^{0,v_n}_{t;-N,\mathcal{Y}^{0,v,\ast}(-N)}-Y^{0,v}_{t;-N,\mathcal{Y}^{0,v,\ast}(-N)}\|_r,
 \end{align*}
 where $N>|t|$. Applying the result that $\Gamma_{M;-N,\mathcal{Y}^{0,v,\ast}(-N)}$ is compact, we get
 \begin{equation*}
  \lim_{n \rightarrow +\infty} \| \mathcal{Y}^{0,v_n,\ast}(t)-\mathcal{Y}^{0,v,\ast}(t)\|_r \leq \lim_{n \rightarrow +\infty} \sup_{n \geq 1} \| Y^{0,v_n}_{t;-N,\mathcal{Y}^{0,v_n,\ast}(-N)}-Y^{0,v_n}_{t;-N,\mathcal{Y}^{0,v,\ast}(-N)}\|_r.
 \end{equation*}
 By estimate \eqref{eq: estimate-skeleton-2}, we have 
 \begin{equation*}
  \sup_{n \geq 1} \| Y^{0,v_n}_{t;-N,\mathcal{Y}^{0,v_n,\ast}(-N)}-Y^{0,v_n}_{t;-N,\mathcal{Y}^{0,v,\ast}(-N)}\|_r \leq 2 \, e^{-\lambda (t+N)+ M}\, \sup_{n \geq 1} \| \mathcal{Y}^{0,v_n,\ast}(-N)-\mathcal{Y}^{0,v,\ast}(-N) \|_{r}^2.
 \end{equation*}
Following from the estimates \eqref{eq: estimate-skeleton-1} and \eqref{eq: estimate-skeleton-2}, we know that $\sup_{n,N}\|\mathcal{Y}^{0,v_n,\ast}(-N)\|_r<+\infty$. Combining the above estimates, taking $N \rightarrow + \infty$, we get $\lim\limits_{n \rightarrow +\infty} \| \mathcal{Y}^{0,v_n,\ast}(t)-\mathcal{Y}^{0,v,\ast}(t)\|_r = 0$. So the set $\Gamma_{M;-\infty}$ is compact, and $\cI_{-\infty}$ is also a good rate function.
\end{proof}

To obtain the uniform LDP, we need the lower semicontinuity of map $\xi \mapsto \cI_{t_0,\xi}(\Phi)$ and the compactness of $\Lambda_{M,t_0,K}= \cup_{\xi \in K} \{\Phi \in \sC_r; \, \cI_{t_0,\xi}(\Phi) \leq M\}$.

\begin{lemma} \label{lem: proposition-skeleton-equation-2}
For each $\Phi \in \sC_r$, $\xi \mapsto \cI_{t_0,\xi}(\Phi)$ is a lower semicontinuous (l.s.c.) map from $ C_r$ to $[0,+\infty]$. Furthermore, for each $M<+\infty$,
 \begin{equation*}
  \Lambda_{M,t_0,K} := \big\{ \Phi \in \sC_r: \,\, \cI_{t_0,\xi}(\Phi) \leq M, \, \xi \in K \big\}
 \end{equation*}
 is a compact subset of $\sC_r$. 
\end{lemma}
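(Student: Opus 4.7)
\textbf{Proof proposal for Lemma \ref{lem: proposition-skeleton-equation-2}.}
The plan is to reduce both assertions to a single joint continuity statement: along any sequence $\xi_n \to \xi$ in $C_r$ and $v_n \to v$ weakly in $S_M$, the controlled paths satisfy $\mathcal{G}^{0}_{t_0,\xi_n}(\int_{0}^{\cdot} v_n(s)\d s) \to \mathcal{G}^{0}_{t_0,\xi}(\int_{0}^{\cdot} v(s)\d s)$ in $\sC_r$. Once this is established, the lower semicontinuity follows from the standard weak-lower-semicontinuity of $\|\cdot\|_{L^2(\R;\R^m)}^2$, and the compactness of $\Lambda_{M,t_0,K}$ reduces to showing that it is a closed subset of the (already known) compact set $\Gamma_{M',t_0,K}$ with $M'=2M+1$.

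To obtain the joint convergence, I would split
\begin{equation*}
\vertiii{Y^{0,v_n}_{\cdot;t_0,\xi_n}-Y^{0,v}_{\cdot;t_0,\xi}}_r \leq \vertiii{Y^{0,v_n}_{\cdot;t_0,\xi_n}-Y^{0,v_n}_{\cdot;t_0,\xi}}_r + \vertiii{Y^{0,v_n}_{\cdot;t_0,\xi}-Y^{0,v}_{\cdot;t_0,\xi}}_r.
\end{equation*}
The first term is bounded, using estimate \eqref{eq: estimate-skeleton-2} from Lemma \ref{lem: well-posedness-skeleton-equation}, by a constant depending only on $M$ times $\|\xi_n-\xi\|_r$, which converges to zero uniformly in $v_n \in S_M$. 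The second term converges to zero by the continuity of $G_{t_0,\xi}:S_M \to \sC_r$ established in Lemma \ref{lem: proposition-skeleton-equation-1}. Combining these gives the required joint convergence.

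For the lower semicontinuity, fix $\Phi \in \sC_r$ and take $\xi_n \to \xi$ in $C_r$. If $L := \liminf_n \cI_{t_0,\xi_n}(\Phi) = +\infty$ there is nothing to prove; otherwise, pass to a subsequence realizing the liminf and choose $v_n \in L^2(\R;\R^m)$ with $\Phi = \mathcal{G}^0_{t_0,\xi_n}(\int_0^\cdot v_n \d s)$ and $\tfrac12 \int |v_n|^2 \d s \leq \cI_{t_0,\xi_n}(\Phi)+\tfrac1n$. For $n$ large the $v_n$ lie in $S_{2L+2}$, which is weakly compact, so along a further subsequence $v_n \rightharpoonup v$ in $S_{2L+2}$. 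The joint convergence above then identifies $\Phi = \mathcal{G}^0_{t_0,\xi}(\int_0^\cdot v \d s)$, and weak lower semicontinuity of the $L^2$ norm yields $\cI_{t_0,\xi}(\Phi) \leq \tfrac12 \int |v|^2 \d s \leq L$.

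For the compactness of $\Lambda_{M,t_0,K}$, note that any $\Phi \in \Lambda_{M,t_0,K}$ with $\cI_{t_0,\xi}(\Phi) \leq M$ admits some $v \in S_{2M+1}$ giving $\Phi = \mathcal{G}^0_{t_0,\xi}(\int_0^\cdot v \d s)$; hence $\Lambda_{M,t_0,K} \subset \Gamma_{2M+1;t_0,K}$, and the latter is compact by Lemma \ref{lem: proposition-skeleton-equation-1}. It remains to check closedness: given $\Phi_n \to \Phi$ with $\cI_{t_0,\xi_n}(\Phi_n) \leq M$ and $\xi_n \in K$, compactness of $K$ extracts $\xi_n \to \xi \in K$, and choosing near-optimal controls $v_n \in S_{2M+2/n}$ with $\Phi_n = \mathcal{G}^0_{t_0,\xi_n}(\int_0^\cdot v_n \d s)$ allows us to extract a weak limit $v_n \rightharpoonup v$ in $S_{2M+2}$. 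Applying the same joint-convergence argument to $\mathcal{G}^0_{t_0,\xi_n}(\int_0^\cdot v_n \d s)$, the limit equals $\mathcal{G}^0_{t_0,\xi}(\int_0^\cdot v \d s)$, so it must coincide with $\Phi$, and weak lower semicontinuity gives $\cI_{t_0,\xi}(\Phi) \leq M$. The main subtlety throughout is mixing strong convergence of $\xi_n$ with weak convergence of $v_n$, and this is precisely what the uniform-in-$v$ estimate \eqref{eq: estimate-skeleton-2} is designed to handle.
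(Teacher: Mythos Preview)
Your proof is correct and follows essentially the same approach as the paper: both rely on the triangle-inequality split using estimate \eqref{eq: estimate-skeleton-2} for the $\xi$-variation and the continuity of $G_{t_0,\xi}$ from Lemma \ref{lem: proposition-skeleton-equation-1} for the $v$-variation, then combine with weak compactness of $S_M$. The only cosmetic difference is that the paper packages the compactness of $\Lambda_{M,t_0,K}$ via the identity $\Lambda_{M,t_0,K}=\bigcap_{n\geq 1}\Gamma_{2M+1/n;t_0,K}$ (deferring the verification to \cite{Budhiraja_Large_2008}), whereas you argue sequential closedness directly---the underlying ingredients are identical.
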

\begin{proof}
 Let $\{ \xi_{n} \}_{n \in \N}$ be a convergence sequence in $C_r$ with the limit $\xi$. For each $\Phi \in \sC_r$, if 
 \begin{equation*}
  \liminf_{n \rightarrow +\infty} \cI_{t_0,\xi_n}(\Phi)=M <+\infty,
 \end{equation*}
 then by the definition of $\cI_{t_0,\xi_n}(\Phi)$, for any $\delta>0$, there are $\{ v_{n_k} \}_{k \in \N} \subset S_{2M+\delta}$ such that
 \begin{equation*}
  \Phi ={\mathcal G}^{0}_{t_0,\xi_{n_k}} \big(\int_{0}^{\cdot} v_{n_k}(s) \d s \big), \quad \forall \, k \in \N,
 \end{equation*}
 where $\{n_k\}_{k}$ is a subsequence of $\N$. By the compactness of $S_{2M+\delta}$, there is a subsequence (we also denote it as $\{n_k\}_{k}$) such that $v_{n_{k}}$ converge to $v \in S_{2M+\delta}$ as $k \rightarrow +\infty$. Then by the continuity of map $G_{t_0,\xi}: v \mapsto Y^{0,v}_{\cdot;t_0,\xi}$ and estimate \eqref{eq: estimate-skeleton-2}, for any $t_0 \in \R$, we have
 \begin{align*}
  \Big\| \Phi(t)-{\mathcal G}^{0}_{t_0,\xi} \big(\int_{0}^{\cdot} v(s) \d s \big)(t) \Big\|_r & =\lim_{k \rightarrow +\infty} \Big\| {\mathcal G}^{0}_{t_0,\xi_{n_k}} \big(\int_{0}^{\cdot} v_{n_k}(s) \d s \big)(t)-{\mathcal G}^{0}_{t_0,\xi} \big(\int_{0}^{\cdot} v(s) \d s \big)(t) \Big\|_r \\
  & \leq \lim_{k \rightarrow +\infty} \Big\| {\mathcal G}^{0}_{t_0,\xi_{n_k}} \big(\int_{0}^{\cdot} v_{n_k}(s) \d s \big)(t) - {\mathcal G}^{0}_{t_0,\xi} \big(\int_{0}^{\cdot} v_{n_k}(s) \d s \big)(t) \Big\|_r \\
  & \quad + \lim_{k \rightarrow +\infty} \Big\| {\mathcal G}^{0}_{t_0,\xi} \big(\int_{0}^{\cdot} v_{n_k}(s) \d s \big)(t)-{\mathcal G}^{0}_{t_0,\xi} \big(\int_{0}^{\cdot} v(s) \d s \big)(t) \Big\|_r =0.
 \end{align*}
 So we get ${\mathcal G}^{0}_{t_0,\xi} \big(\int_{0}^{\cdot} v(s) \d s \big)=\Phi$, then $\cI_{t_0,\xi}(\Phi) \leq M+\delta/2$. By the arbitrariness of $\delta$, we have 
 \begin{equation*}
  \cI_{t_0,\xi}(\Phi) \leq M= \liminf_{n \rightarrow +\infty} \cI_{t_0,\xi_n}(\Phi).
 \end{equation*}
 We get $\xi \mapsto \cI_{t_0,\xi}(\Phi)$ is an l.s.c. map. Then following the proof of \cite[Theorem 5]{Budhiraja_Large_2008}, we obtain
 \begin{equation*}
  \Lambda_{M,t_0,K}= \bigcap_{n \geq 1} \Gamma_{2M+\frac{1}{n};t_0,K}.
 \end{equation*}
 Thus, $\Lambda_{M,t_0,K}$ is compact follows from the compactness of $\Gamma_{2M+\frac{1}{n};t_0,K}$.
\end{proof}

\subsection{Uniform Large Deviation Principle for solution map} \label{subsec: uniform LDP}

In this subsection, we will prove Theorem \ref{thm: uniform-LDP}, the uniform LDP for $\{ Y^{\epsilon}_{\cdot;t_0,\xi}\}_{\epsilon>0}$. Due to Lemma \ref{lem: truncated-measurable-map}, we have $\P$-a.s. $Y^{\epsilon}_{\cdot;t_0,\xi}= \mathcal{G}_{t_0}^{\epsilon}(W,\xi)=:\mathcal{G}_{t_0,\xi}^{\epsilon}(W)$. Following \cite[Theorem 5]{Budhiraja_Large_2008}, using the variational representation for double-side Wiener process (Lemma \ref{lem: variational representation}) and the weak converge theory, we know that the following conditions are sufficient for the uniform LDP for $\{ Y^{\epsilon}_{\cdot;t_0,\xi}\}_{\epsilon>0}$.

\begin{itemize}
 \item [(A1)] Let $M >0$, families $\{v_{\epsilon}: \,\epsilon > 0\} \subset{\cA}_{M}$ and $\{\xi^{\epsilon} \} \subset C_r$. If $v_{\epsilon}$ converge to $v$ in distribution as $S_M$-valued random elements and $\xi^{\epsilon} \rightarrow \xi$ in $C_r$ as $\epsilon \rightarrow 0$, then
 \begin{equation*}
  \mathcal{G}_{t_0; \xi^{\epsilon}}^{\epsilon} \Big(W(\cdot)+\frac{1}{\sqrt{\epsilon}} \int_{0}^{\cdot} v_{\epsilon}(s) \, \d s\Big) \rightarrow \mathcal{G}_{t_0,\xi}^{0} \Big(\int_{0}^{\cdot} v(s) \, \d s\Big) 
 \end{equation*}
 in distribution as $\epsilon \rightarrow 0$. 
 \item [(A2)] $\xi \mapsto \cI_{t_0,\xi}$ is an l.s.c. map from $C_r$ to $[0,+\infty]$; and for each $M>0$ and compact set $K \subset C_r$, the set $\Gamma_{M;t_0,K}=\big\{ \mathcal{G}^{0}_{t_0,\xi} (\int_{0}^{\cdot} v(s) \d s) \in \sC_r: \, v \in S_M, \, \xi \in K \big\} $ is compact.
\end{itemize}

We have proved the condition (A2) in Lemma \ref{lem: proposition-skeleton-equation-1} and \ref{lem: proposition-skeleton-equation-2}. Next, we will verify the condition (A1). For each $\xi \in C_r$ and $v \in \cA_M$, let $Y_{t;t_0,\xi}^{\epsilon,v}$ be the solution of 
\begin{equation} \label{eq: truncated-controlled-equation}
 Y^{\epsilon,v}_{t; t_0, \xi}(0)= \xi(0)+\int_{t_0}^{t} b(Y^{\epsilon,v}_{s;t_0,\xi}) \, \d s + \int_{t_0}^{t} \sigma(Y^{\epsilon,v}_{s;t_0,\xi}) \cdot v(s) \, \d s + \sqrt{\epsilon} \int_{t_0}^{t} \sigma(Y^{\epsilon,v}_{s;t_0,\xi}) \, \d W(s) ,
\end{equation}
if $t \geq t_0$ and $\xi^{\epsilon}(t-t_0)$ otherwise, and $Y^{\epsilon,v_{\epsilon}}_{t; t_0, \xi^{\epsilon}}(\tau) =Y^{\epsilon,v_{\epsilon}}_{t+\tau; t_0, \xi^{\epsilon}}(0)$ for any $\tau \in (-\infty,0)$. By Lemma \ref{lem: truncated-measurable-map} and the Girsanov theorem, we know that for each $\epsilon>0$,
\begin{equation*}
 \P\mbox{-}a.s., \quad Y^{\epsilon,v_{\epsilon}}_{t;t_0,\xi^{\epsilon}}=\mathcal{G}^{\epsilon}_{t_0;\xi^{\epsilon}} \Big(W(\cdot)+\frac{1}{\sqrt{\epsilon}} \int_{0}^{\cdot} v_{\epsilon}(s) \, \d s\Big)(t), \quad \forall \, t \in \R,
\end{equation*}
which also implies the existence of solution $Y_{t;t_0,\xi}^{\epsilon,v}$ to the controlled Eq. \eqref{eq: truncated-controlled-equation}. In order to verify the condition (A1), we need the following useful estimates of $Y_{t;t_0,\xi}^{\epsilon,v}$. To prove these estimates, we will use the stochastic Gr\"onwall inequality, see in Lemma \ref{lem: Stochastic Gronwall}. 
\begin{lemma} \label{lem: truncated-controlled-equation}
 For each $M>0$, $v \in \cA_M $ and $\xi \in C_r$, Eq. \eqref{eq: truncated-controlled-equation} admits a unique solution $Y^{\epsilon,v}_{t; t_0, \xi}$ for any $0< \epsilon < \epsilon_0:= \big((2 \lambda_1-2\lambda_2 \mu_{1}^{(2r)})/ (\lambda_3 \mu_{2}^{(2r)})-1 \big) \wedge 1$. For any $\lambda \in (0, 2 \lambda_1- 2 \lambda_2 \mu_{1}^{(2r)} - (1+\epsilon) \lambda_3 \mu_{2}^{(2r)} \wedge 2r)$, we have the following estimates.
 \begin{itemize}
  \item [i)] If $\epsilon_1>0$ and $\epsilon_2>0$ are satisfying
  \begin{equation*}
   2 \lambda_1-\epsilon_1 -2 \lambda_2 \mu_{1}^{(2r)} - \frac{(1+\epsilon)}{1-\epsilon_2} \lambda_3 \mu_{2}^{(2r)}>\lambda >0,
  \end{equation*} 
  then for any $t \geq t_0$, we have 
  \begin{align} 
   \E \| Y^{\epsilon,v}_{t;t_0,\xi}\|_{r} & \leq \Tilde{C}_1(\epsilon,\lambda) \, e^{ \frac{-\lambda (t-t_0)}{2}+\frac{M}{2(1-\epsilon_2)}} \, \|\xi\|_{r} + \Tilde{C}_2(\epsilon,\lambda) \, e^{\frac{M}{2(1-\epsilon_2)}}; \label{eq: estimate-controlled-solution-1} \\
   \E \big[\| Y^{\epsilon,v}_{t;t_0,\xi}\|_{r}^{3/2} \big]^{2/3} & \leq 2 \Tilde{C}_1(\epsilon,\lambda) \, e^{ \frac{-\lambda (t-t_0)}{2}+\frac{M}{2(1-\epsilon_2)}} \, \|\xi\|_{r} + 2\Tilde{C}_2(\epsilon,\lambda) \, e^{\frac{M}{2(1-\epsilon_2)}}, \label{eq: estimate-controlled-solution-2}
  \end{align}
  where $ \Tilde{C}_1 $ and $\Tilde{C}_2$ defined as 
  \begin{align*}
   & \Tilde{C}_1(\epsilon,\lambda) := 1+ (3 \pi +1)^{2/3} \Big( 1 + \frac{2\lambda_{2}\mu_{1}^{(2r)} +\frac{(1+ \epsilon)}{1-\epsilon_2} \lambda_{3}\mu_{2}^{(2r)}}{2r-\lambda} \Big)^{1/2} ;\\
   & \Tilde{C}_2(\epsilon,\lambda) := \frac{(3 \pi +1)^{2/3}}{\sqrt{\lambda}} \Big( \frac{1}{\epsilon_1} |b(0)|^2 + \frac{1+\epsilon}{\epsilon_2} |\sigma(0)|^2 \Big)^{1/2}.
  \end{align*}
  Thus, $\{Y^{\epsilon,v}_{t;t_0,\xi}\}_{\epsilon,n,t}$ is uniform bounded in $L^{3/2} (\Omega, C_r)$ respect with to $\epsilon$ and $t_0,t \in \R$.

  \item [ii)] For any two $\mathcal{F}_{t_0}$-measurable random initial data $\xi_1, \xi_2 \in L^{3/2}(\Omega,C_r)$, we have
  \begin{equation} \label{eq: estimate-controlled-solution-diff}
   \E \| Y^{\epsilon,v}_{t;t_0,\xi_1} -Y^{\epsilon,v}_{t;t_0,\xi_2} \|_{r} \leq \Tilde{C}_3(\epsilon,\lambda) \, e^{ \frac{-\lambda (t-t_0)+M}{2}} \, \E \big[ \|\xi_1-\xi_2\|_{r}^{3/2} \big]^{2/3}.
  \end{equation}
  where constant $\Tilde{C}_2(\epsilon,\lambda)$ defined as 
  \begin{equation*}
   \Tilde{C}_3(\epsilon,\lambda) := 1+ (3 \pi +1)^{2/3} \Big( 1 + \frac{2\lambda_{2}\mu_{1}^{(2r)} +(1+ \epsilon) \lambda_{3}\mu_{2}^{(2r)}}{2r-\lambda} \Big)^{1/2}.
  \end{equation*}
  Thus, for each initial data $\xi \in C_r$, $\{Y^{\epsilon,v}_{t;-n,\xi}\}_{n \geq 1}$ is a Cauchy sequence in $L^1(\Omega,C_r)$.
 \end{itemize}
\end{lemma}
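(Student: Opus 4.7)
The plan is to establish all three estimates by applying It\^o's formula to $e^{\lambda(t-t_0)}|Y^{\epsilon,v}(t;t_0,\xi)|^2$ and then invoking the stochastic Gr\"onwall inequality of Scheutzow (Lemma \ref{lem: Stochastic Gronwall}), mirroring the deterministic reasoning of Lemma \ref{lem: well-posedness-skeleton-equation} but now with a genuine martingale term and a random multiplier $|v|^2$. The existence and pathwise uniqueness of the controlled solution follow from Lemma \ref{lem: truncated-measurable-map} together with the Girsanov theorem on each finite interval, as noted in the paragraph preceding the statement; the threshold $\epsilon < \epsilon_0$ is exactly what keeps the cumulative dissipativity $2\lambda_1 > 2\lambda_2\mu_1^{(2r)} + (1+\epsilon)\lambda_3\mu_2^{(2r)}$ strict, so that one can choose $\lambda > 0$ and auxiliary weights $\epsilon_1, \epsilon_2 > 0$ for which the coefficient in front of $|Y|^2$ in the Gr\"onwall expansion becomes non-positive. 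It\^o's formula produces the identity
\[
e^{\lambda(t-t_0)}|Y^{\epsilon,v}(t)|^2 = |\xi(0)|^2 + \int_{t_0}^t e^{\lambda(s-t_0)}\left(\lambda |Y|^2 + 2 Y^\prime b + 2 Y^\prime \sigma v + \epsilon |\sigma|^2\right) ds + M(t),
\]
where $M(t) := 2\sqrt{\epsilon}\int_{t_0}^t e^{\lambda(s-t_0)} Y^\prime \sigma(Y^{\epsilon,v}_s)\,dW(s)$ is a local martingale, and $b, \sigma$ are evaluated at $Y^{\epsilon,v}_s$.

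Splitting $b = (b - b(0)) + b(0)$ and $\sigma = (\sigma - \sigma(0)) + \sigma(0)$, I would bound the drift cross-term by (H2) with Cauchy--Schwarz weight $\epsilon_1$, handle the It\^o correction via (H3) and the elementary inequality $|a+b|^2 \leq (1+\delta)|a|^2 + (1+\delta^{-1})|b|^2$, and split the control term using $|2 Y^\prime \sigma v| \leq \epsilon_2 |\sigma|^2 + \epsilon_2^{-1}|Y|^2 |v|^2$ with $\epsilon_2$ tuned so that the accumulated coefficient in front of $|\sigma - \sigma(0)|^2$ equals $\frac{1+\epsilon}{1-\epsilon_2}$. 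Using $\int_{-\infty}^0 |\phi(\tau)|^2 \mu_i(d\tau) \leq \mu_i^{(2r)}\|\phi\|_r^2$ from Lemma \ref{lem: estimate for mu} to transfer the delayed quadratic terms into $\|Y^{\epsilon,v}_s\|_r^2$, one finds that the coefficient of $|Y(s)|^2$ in the $ds$-integrand collapses to $-(2\lambda_1 - \lambda - \epsilon_1 - 2\lambda_2\mu_1^{(2r)} - \frac{1+\epsilon}{1-\epsilon_2}\lambda_3\mu_2^{(2r)}) \leq 0$ and can be dropped. Setting $Z(t) := e^{\lambda(t-t_0)}|Y^{\epsilon,v}(t)|^2$, this yields
\[
Z(t) \leq |\xi(0)|^2 + C_0 \int_{t_0}^t e^{\lambda(s-t_0)}\,ds + \int_{t_0}^t |v(s)|^2 Z(s)\,ds + M(t),
\]
with $C_0 = \epsilon_1^{-1}|b(0)|^2 + \epsilon_2^{-1}(1+\epsilon)|\sigma(0)|^2$. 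Since $\int_{t_0}^\infty |v(s)|^2 ds \leq M$ almost surely, applying Scheutzow's stochastic Gr\"onwall (Lemma \ref{lem: Stochastic Gronwall}) at the fractional moment $p = 3/4$ produces an $L^{3/4}$-bound on $\sup_s Z(s)$ whose exponential factor is bounded by $e^{M/(1-\epsilon_2)}$ and whose martingale piece contributes the BDG-type constant $(3\pi+1)^{2/3}$; converting the resulting pointwise bound on $Z(s)$ to the segment norm via Proposition \ref{prop: cr} iii) then delivers \eqref{eq: estimate-controlled-solution-1}--\eqref{eq: estimate-controlled-solution-2}.

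The difference estimate \eqref{eq: estimate-controlled-solution-diff} is obtained by the same procedure applied to $\eta(t) := Y^{\epsilon,v}_{t;t_0,\xi_1} - Y^{\epsilon,v}_{t;t_0,\xi_2}$: using (H2) and (H3) on the differences $b(Y_1) - b(Y_2)$ and $\sigma(Y_1) - \sigma(Y_2)$ eliminates any $b(0), \sigma(0)$ contribution and leaves only the boundary term $|\xi_1(0) - \xi_2(0)|^2 \leq \|\xi_1 - \xi_2\|_r^2$, so that the same stochastic Gr\"onwall machinery yields \eqref{eq: estimate-controlled-solution-diff}. The Cauchy property of $\{Y^{\epsilon,v}_{t;-n,\xi}\}_n$ in $L^1(\Omega; C_r)$ follows by inserting $\xi_1 = Y^{\epsilon,v}_{-n;-m,\xi}$ and $\xi_2 = \xi$ into \eqref{eq: estimate-controlled-solution-diff} and using \eqref{eq: estimate-controlled-solution-2} to bound $\E\|Y^{\epsilon,v}_{-n;-m,\xi}\|_r^{3/2}$ uniformly in $m$, while the decay factor $e^{-\lambda(n+t)/2}$ drives the difference to zero as $n \to +\infty$. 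I expect the main obstacle to be the precise bookkeeping in the stochastic Gr\"onwall step: the fractional exponent $p$ and the splitting weights $\epsilon_1, \epsilon_2$ must be tuned simultaneously so that the BDG constant reproduces $(3\pi+1)^{2/3}$ and the exponential factor arising from the random multiplier $|v|^2$ collapses to exactly $\exp(M/(2(1-\epsilon_2)))$, recovering the explicit constants $\Tilde{C}_1, \Tilde{C}_2, \Tilde{C}_3$ of the statement.
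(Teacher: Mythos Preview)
Your approach is essentially the paper's: It\^o's formula, assumptions (H2)--(H3), the delay estimate of Lemma \ref{lem: estimate for mu}, Scheutzow's stochastic Gr\"onwall inequality, and finally Proposition \ref{prop: cr} iii) to pass from $|Y(s)|$ to $\|Y_s\|_r$. The paper actually proves part ii) first (the difference estimate, where no $b(0),\sigma(0)$ splitting is needed) and then indicates the modifications for part i), but the logic is the same as yours.

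Two technical points to fix. First, the inequality you attribute to Lemma \ref{lem: estimate for mu} is only the pointwise bound $\int_{-\infty}^0|\phi(\tau)|^2\mu_i(d\tau)\le \mu_i^{(2r)}\|\phi\|_r^2$; if you use that directly you end up with $\|Y^{\epsilon,v}_s\|_r^2$ in the $ds$-integrand, which cannot be absorbed into $Z(s)=e^{\lambda(s-t_0)}|Y(s)|^2$. What Lemma \ref{lem: estimate for mu} actually provides is the \emph{time-integrated} estimate that converts $\int_{t_0}^t e^{\lambda s}\int_{-\infty}^0|\phi(s+\tau)|^2\mu_i(d\tau)\,ds$ into a boundary term $\frac{\mu_i^{(2r)}}{2r-\lambda}\|\xi\|_r^2$ plus $\mu_i^{(2r)}\int_{t_0}^t e^{\lambda s}|\phi(s)|^2\,ds$; this is what lets the delayed quadratic term be absorbed by the dissipation at the level of $|Y(s)|^2$, which you correctly record in your final inequality. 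Second, the paper applies the stochastic Gr\"onwall lemma with $p=\tfrac12$, $p_1=\tfrac32$, $p_2=3$ to obtain the $L^1$ bounds \eqref{eq: estimate-controlled-solution-1} and \eqref{eq: estimate-controlled-solution-diff} (this is where $(3\pi+1)^{2/3}$ comes from), and separately with $p=\tfrac34$, $p_1=\tfrac54$, $p_2=5$ for the $L^{3/2}$ bound \eqref{eq: estimate-controlled-solution-2}; a single choice $p=\tfrac34$ will not directly yield \eqref{eq: estimate-controlled-solution-1} with the stated constants.
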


\begin{proof}
 Firstly, we proceed to prove Lemma \ref{lem: truncated-controlled-equation} ii). For two $\mathcal{F}_{t_0}$-measurable random initial data $\xi_1, \xi_2 \in L^{3/2}(\Omega,C_r)$ and $v \in \cA_M$, let $\eta(t):= Y^{\epsilon,v}(t;t_0,\xi_1) - Y^{\epsilon,v}(t;t_0,\xi_2)$. Then we obtain
 \begin{align}
  e^{\lambda (t-t_0)} |\eta(t)|^2 & = |\eta(t_0)|^2 + \int_{t_0}^{t} e^{\lambda (s-t_0)} \left( \lambda |\eta(s)|^{2}+ 2 \eta(s)^{\prime} \Big( b(Y^{\epsilon,v}_{s;t_0,\xi_1})-b(Y^{\epsilon,v}_{s;t_0,\xi_2}) \Big) \right) \, \d s \nonumber \\
  & \quad + 2 \int_{t_0}^{t} e^{\lambda (s-t_0)} \, \eta(s)^{\prime} \, v(s) \, \left( \sigma(Y^{\epsilon,v}_{s;t_0,\xi_1})- \sigma(Y^{\epsilon,v}_{s;t_0,\xi_2}) \right) \, \d s \nonumber \\
  & \quad + \epsilon \int_{t_0}^{t} e^{\lambda (s-t_0)} \, \big|\sigma(Y^{\epsilon,v}_{s;t_0,\xi_1})-\sigma(Y^{\epsilon,v}_{s;t_0,\xi_2}) \big|^{2} \, \d s \nonumber \\
   & \quad + 2 \sqrt{\epsilon} \int_{t_0}^{t} e^{\lambda (s-t_0)} \, \eta(s)^{\prime} \left(\sigma(Y^{\epsilon,v}_{s;t_0,\xi_1})-\sigma(Y^{\epsilon,v}_{s;t_0,\xi_2}) \right) \, \d W(s). \label{eq: energy-controlled}
 \end{align}
 Except for the third term on the right-hand side of \eqref{eq: energy-controlled}, the estimates for all other items can be found on the \cite[Theorem 3.2]{Wu_Stochastic_2017}. By Cauchy--Schwarz inequality, we obtain 
 \begin{align*}
  & \int_{t_0}^{t} e^{\lambda (s-t_0)} \, \eta(s)^{\prime} \, v(s) \, \left( \sigma(Y^{\epsilon,v}_{s;t_0,\xi_1})- \sigma(Y^{\epsilon,v}_{s;t_0,\xi_2}) \right) \, \d s \\
  & \quad \leq \frac{1}{2} \int_{t_0}^{t} e^{\lambda (s-t_0)} |v(s)|^2 |\eta(s)|^2 \, \d s + \frac{1}{2} \int_{t_0}^{t} e^{\lambda (s-t_0)} \, \big|\sigma(Y^{\epsilon,v}_{s;t_0,\xi_1})-\sigma(Y^{\epsilon,v}_{s;t_0,\xi_2}) \big|^{2} \, \d s.
 \end{align*}
 Analogous to the proof of \cite[Theorem 3.2]{Wu_Stochastic_2017}, assumption (H3) and Lemma \ref{lem: estimate for mu} yields
 \begin{align*}
  & \int_{t_0}^{t} e^{\lambda (s-t_0)} \, \big|\sigma(Y^{\epsilon,v}_{s;t_0,\xi_1})-\sigma(Y^{\epsilon,v}_{s;t_0,\xi_2}) \big|^{2} \, \d s \\
  & \quad \leq \lambda_3 \int_{t_0}^{t} e^{\lambda (s-t_0)} \int_{-\infty}^{0} \big|Y^{\epsilon,v}_{s;t_0,\xi_1}(\tau)-Y^{\epsilon,v}_{s;t_0,\xi_2} (\tau) \big| \, \mu_2^{(2r)}(\d \tau) \\
   & \quad \leq \frac{\lambda_3 \mu_{2}^{(2r)}}{2r-\lambda} \|\xi_1-\xi_2\|_{r}^2 + \lambda_3 \mu_{2}^{(2r)} \int_{t_0}^{t} e^{\lambda (s-t_0)} \vert \eta(s)|^{2} \, \d s.
 \end{align*}
 Similarly, by assumption (H2) and Lemma \ref{lem: estimate for mu}, we obtain
 \begin{align*}
  & 2 \int_{t_0}^{t} e^{\lambda (s-t_0)} \, \eta(s)^{\prime} \Big( b(Y^{\epsilon,v}_{s;t_0,\xi_1})-b(Y^{\epsilon,v}_{s;t_0,\xi_2}) \Big) \, \d s \\
  & \quad \leq \big(- 2 \lambda_1+2 \lambda_2 \mu_{1}^{(2r)} \big) \int_{t_0}^{t} e^{\lambda (s-t_0)} |\eta(s)|^{2} \, \d s + \frac{2 \lambda_2 \mu_{1}^{(2r)}}{2r-\lambda} \|\xi_1-\xi_2\|_{r}^2.
 \end{align*}
 Combining the above estimates, from \eqref{eq: energy-controlled} we get 
 \begin{align}
  e^{\lambda (t-t_0)} |\eta(t)|^2 & \leq |\eta(0)|^2 + \frac{2\lambda_{2}\mu_{1}^{(2r)} +(1+ \epsilon) \lambda_{3}\mu_{2}^{(2r)}}{2r-\lambda} \|\xi_1-\xi_2\|_{r}^2 +\int_{t_0}^{t} e^{\lambda (s-t_0)} |v(s)|^2 |\eta(s)|^2 \, \d s \nonumber \\
  & \quad + \big(- 2 \lambda_1+2 \lambda_2 \mu_{1}^{(2r)} + (1+\epsilon) \lambda_3 \mu_{2}^{(2r)} +\lambda \big) \int_{t_0}^{t} e^{\lambda (s-t_0)} |\eta(s)|^{2} \, \d s \nonumber \\
  & \quad + 2 \sqrt{\epsilon} \int_{t_0}^{t} e^{\lambda (s-t_0)} \, \eta(s)^{\prime} \left(\sigma(Y^{\epsilon,v}_{s;t_0,\xi_1})-\sigma(Y^{\epsilon,v}_{s;t_0,\xi_2}) \right) \, \d W(s). \label{eq: energy-controlled-1}
 \end{align}
 Due to $0<\lambda <2 \lambda_1- 2 \lambda_2 \mu_{1}^{(2r)} - (1+\epsilon) \lambda_3 \mu_{2}^{(2r)}$, the fourth term on the right side of \eqref{eq: energy-controlled-1} is less than zero. Applying Lemma \ref{lem: Stochastic Gronwall} with $p=1/2$, $p_1=3/2$ and $p_2=3$, we obtain
 \begin{align*}
  \E \sup_{t_0 \leq s \leq t} e^{\lambda (s-t_0)/2} |\eta(s)| & \leq (3 \pi +1)^{2/3} \Big( \E \exp \Big\{\frac{3}{2} \int_{t_0}^{t} |v(s)|^2 \d s \Big\} \Big)^{1/3} \\
  & \quad \times \E \bigg[ \Big( |\eta(0)|^2 + \frac{2\lambda_{2}\mu_{1}^{(2r)} +(1+ \epsilon) \lambda_{3}\mu_{2}^{(2r)}}{2r-\lambda} \|\xi_1-\xi_2\|_{r}^{2} \Big)^{3/4} \bigg]^{2/3}.
 \end{align*}
 By the fact that $v \in \cA_M$, we have 
 \begin{equation*}
  \E \sup_{t_0 \leq s \leq t} e^{\lambda (s-t_0)/2} |\eta(s)| \leq (3 \pi +1)^{2/3} \Big( 1 + \frac{2\lambda_{2}\mu_{1}^{(2r)} +(1+ \epsilon) \lambda_{3}\mu_{2}^{(2r)}}{2r-\lambda} \Big)^{1/2} e^{M/2} \big(\E \|\xi_1-\xi_2\|_{r}^{3/2} \big)^{2/3}.
 \end{equation*}
 Noting that $\lambda < 2r$, by Proposition \ref{prop: cr} iii) and triangle inequality, we obtain
 \begin{equation*}
  \E \| \eta_t \|_r = \E \sup_{-\infty < \tau \leq 0} e^{r \tau} \, |\eta(t+\tau)| \leq e^{-\lambda t/2} \, \|\xi_1-\xi_2\|_{r} + e^{-\lambda t/2} \, \E \sup_{t_0 < s \leq t } e^{\lambda s/2} \, |\eta(s)|.
 \end{equation*}
Therefore, combining the above estimates, we get \eqref{eq: estimate-controlled-solution-diff} and the uniqueness of solution $Y^{\epsilon,v}_{t; t_0, \xi}$.
 
 Now, we turn to prove Lemma \ref{lem: truncated-controlled-equation} i). Analogous to the proof of Lemma \ref{lem: truncated-controlled-equation} ii), using the estimates in \cite[Theorem 3.2]{Wu_Stochastic_2017} and choosing $\epsilon_1$ and $\epsilon_2$ suitable, we obtain
 \begin{align}
  & e^{\lambda (t-t_0)} |Y^{\epsilon,v}(t;t_0,\xi)|^2 -|Y^{\epsilon,v}(t;t_0,\xi)(0)|^2 \nonumber \\
  & \leq \frac{2\lambda_{2}\mu_{1}^{(2r)} +\frac{(1+ \epsilon)}{1-\epsilon_2} \lambda_{3}\mu_{2}^{(2r)}}{2r-\lambda} \|\xi\|_{r}^2 + \frac{1}{\lambda} \Big( \frac{1}{\epsilon_1} |b(0)|^2 + \frac{1+\epsilon}{\epsilon_2} |\sigma(0)|^2 \Big) \big(e^{\lambda(t-t_0)}-1 \big) \nonumber \\
  & \quad + \frac{1}{1-\epsilon_2}\int_{t_0}^{t} e^{\lambda (s-t_0)} |v(s)|^2 |Y^{\epsilon,v}(s;t_0,\xi)|^2 \, \d s \nonumber \\
  & \quad + \big(- 2 \lambda_1+ \epsilon_1 +2 \lambda_2 \mu_{1}^{(2r)} + \frac{(1+\epsilon)}{1-\epsilon_2} \lambda_3 \mu_{2}^{(2r)} +\lambda \big) \int_{t_0}^{t} e^{\lambda (s-t_0)} |Y^{\epsilon,v}(s;t_0,\xi)|^{2} \, \d s \nonumber \\
  & \quad + 2 \sqrt{\epsilon} \int_{t_0}^{t} e^{\lambda (s-t_0)} \, Y^{\epsilon,v}(s;t_0,\xi)^{\prime} \sigma(Y^{\epsilon,v}_{s;t_0,\xi}) \, \d W(s). \label{eq: energy-controlled-2}
 \end{align}
 Due to $0<\lambda<2 \lambda_1-\epsilon_1 -2 \lambda_2 \mu_{1}^{(2r)} - \frac{(1+\epsilon)}{1-\epsilon_2} \lambda_3 \mu_{2}^{(2r)} $, applying Lemma \ref{lem: Stochastic Gronwall} with $p=1/2$, $p_1=3/2$ and $p_2=3$ and using the fact that $v \in \cA_M$, we obtain
 \begin{align*}
  & \E \sup_{t_0 \leq s \leq t} e^{\lambda (s-t_0)/2} |Y^{\epsilon,v}(t;t_0,\xi)| \\
  & \leq (3 \pi +1)^{2/3} e^{\frac{M}{2(1-\epsilon_2)}} \Big( 1 + \frac{2\lambda_{2}\mu_{1}^{(2r)} +\frac{(1+ \epsilon)}{1-\epsilon_2} \lambda_{3}\mu_{2}^{(2r)}}{2r-\lambda} \Big)^{1/2} \|\xi\|_{r} \\
   & \quad + (3 \pi +1)^{2/3} e^{\frac{M}{2(1-\epsilon_2)}} \Big( \frac{1}{\lambda} \Big( \frac{1}{\epsilon_1} |b(0)|^2 + \frac{1+\epsilon}{\epsilon_2} |\sigma(0)|^2 \Big) e^{\lambda(t-t_0)} \Big)^{1/2}.
 \end{align*}
 Using Proposition \ref{prop: cr} iii) and triangle inequality, we can show \eqref{eq: estimate-controlled-solution-1}. 
 
 To obtain the higher moment estimate \eqref{eq: estimate-controlled-solution-2}, we only need to apply Lemma \ref{lem: Stochastic Gronwall} with $p=3/4$, $p_1=5/4$ and $p_2=5$ for inequality \eqref{eq: energy-controlled-2} and using the inequality $|a+b|^{q}\leq |a|^q+|b|^q$ for $q \in (0,1)$. Thus, $\{Y^{\epsilon,v}_{t;-n,\xi}\}_{\epsilon,n,t}$ is uniform bounded in $L^{3/2} (\Omega, C_r)$. Combining this uniform boundedness with \eqref{eq: estimate-controlled-solution-diff}, we obtain that $\{Y^{\epsilon,v}_{t;-n,\xi}\}_{n \geq 1}$ is a Cauchy sequence in $L^1(\Omega,C_r)$.
\end{proof}

We have established condition (A2) in Lemmas \ref{lem: proposition-skeleton-equation-1} and \ref{lem: proposition-skeleton-equation-2}, and we have prepared to address condition (A1). Now, it is time to prove the uniform LDP for the solution maps $\{ Y^{\epsilon}_{\cdot;t_0,\xi}\}_{\epsilon>0}$. 

\begin{proof}[\bf Proof of Theorem \ref{thm: uniform-LDP}]
 As the discussions at the beginning of Section \ref{subsec: uniform LDP}, to obtain the uniform LDP for $\{ Y^{\epsilon}_{t;t_0,\xi}\}_{\epsilon>0}$, we only need to verify the condition (A1) and (A2). In Lemma \ref{lem: proposition-skeleton-equation-1} and \ref{lem: proposition-skeleton-equation-2}, we have proved that the family of rate functions $\{ \cI_{t_0,\xi}: \, \xi \in C_r \}$ have compact level sets on compacts, and the condition (A2). Now, we focus on the condition (A1).

 Consider $M >0$, families $\{v_{\epsilon}: \,\epsilon > 0\} \subset{\cA}_{M}$ and $\{\xi^{\epsilon} \} \subset C_r$ such that $v_{\epsilon}$ converge to $v$ in distribution as $S_M$-valued random elements and $\xi^{\epsilon} \rightarrow \xi$ in $C_r$ as $\epsilon \rightarrow 0$. Let 
 \begin{equation*}
  Y_{\cdot;t_0,\xi^{\epsilon}}^{\epsilon,v_{\epsilon}}:= \mathcal{G}_{t_0; \xi^{\epsilon}}^{\epsilon} \Big(W(\cdot)+\frac{1}{\sqrt{\epsilon}} \int_{0}^{\cdot} v_{\epsilon}(s) \, \d s\Big).
 \end{equation*}
 Then by Lemma \ref{lem: truncated-measurable-map} and the Girsanov theorem, $Y_{t;t_0,\xi^{\epsilon}}^{\epsilon,v_{\epsilon}}$ is the solution of Eq. \eqref{eq: truncated-controlled-equation} with $v^\epsilon$ and initial data $\xi^{\epsilon}$.
 By the uniform boundedness given in Lemma \ref{lem: truncated-controlled-equation} i) and the definition of metric $d_r$, to prove the condition (A1) only need to prove that for each $t\geq t_0$, $Y_{t;t_0,\xi^{\epsilon}}^{\epsilon,v_{\epsilon}}$ converges in distribution to $Y^{0,v}_{t,t_0,\xi}$, where $Y^{0,v}_{t,t_0,\xi}$ is the solution of the skeleton equation \eqref{eq: skeleton equation}. 
 
 Considering the continuity of the map $v \in S_M \mapsto Y^{0,v}_{t,t_0,\xi} \in C_r$ as proven in Lemma \ref{lem: proposition-skeleton-equation-1} and the initial data dependence of the skeleton equation presented in Lemma \ref{lem: well-posedness-skeleton-equation}, we can infer that $Y^{0,v_{\epsilon}}_{t,t_0,\xi^{\epsilon}}$ converges in distribution to $Y^{0,v}_{t,t_0,\xi}$ as $\epsilon \rightarrow 0$. It remains to prove that $\eta^{\epsilon}(t) := Y_{t;t_0,\xi^{\epsilon}}^{\epsilon,v_{\epsilon}} - Y_{t;t_0,\xi^{\epsilon}}^{0,v_{\epsilon}}$ converges to zero in distribution. Employing the energy estimate, we know 
 \begin{align*}
  e^{\lambda (t-t_0)} |\eta^{\epsilon}(t)|^2 & = \int_{t_0}^{t} e^{\lambda (s-t_0)} \left( \lambda |\eta^{\epsilon}(s)|^{2}+ 2 \eta^{\epsilon}(s)^{\prime} \Big( b(Y^{\epsilon,v_{\epsilon}}_{s;t_0,\xi^{\epsilon}})-b(Y^{0,v_{\epsilon}}_{s;t_0,\xi^{\epsilon}}) \Big) \right) \, \d s \nonumber \\
  & \quad + 2 \int_{t_0}^{t} e^{\lambda (s-t_0)} \, \eta^{\epsilon}(s)^{\prime} \, v_{\epsilon}(s) \, \left( \sigma(Y^{\epsilon,v_{\epsilon}}_{s;t_0,\xi^{\epsilon}})- \sigma(Y^{0,v_{\epsilon}}_{s;t_0,\xi^{\epsilon}}) \right) \, \d s \nonumber \\
  & \quad + \epsilon \int_{t_0}^{t} e^{\lambda (s-t_0)} \, \big|\sigma(Y^{\epsilon,v_{\epsilon}}_{s;t_0,\xi^{\epsilon}}) \big|^{2} \, \d s + 2 \sqrt{\epsilon} \int_{t_0}^{t} e^{\lambda (s-t_0)} \, \eta^{\epsilon}(s)^{\prime} \sigma(Y^{\epsilon,v_{\epsilon}}_{s;t_0,\xi^{\epsilon}}) \, \d W(s),
 \end{align*}
 where $\lambda >0$ satisfies the same conditions in Lemma \ref{lem: truncated-controlled-equation}. Analogous to the proof of Lemma \ref{lem: truncated-controlled-equation} ii), using Lemma \ref{lem: Stochastic Gronwall}, we obtain
 \begin{align*}
  \E \sup_{t_0 \leq s \leq t} e^{\lambda (s-t_0)/2} |\eta^{\epsilon}(s)| & \leq (3 \pi +1)^{2/3} \Big( \E \exp \Big\{\frac{3}{2} \int_{t_0}^{t} |v_{\epsilon}(s)|^2 \d s \Big\} \Big)^{1/3} \\
  & \quad \times \sqrt{\epsilon} \, \E \bigg[ \Big| \int_{t_0}^{t} e^{\lambda (s-t_0)} \, \big|\sigma(Y^{\epsilon,v_{\epsilon}}_{s;t_0,\xi^{\epsilon}}) \big|^{2} \, \d s \Big|^{3/4} \bigg]^{2/3}.
 \end{align*}
 Using the condition (H3) and Cauchy--Schwarz inequality, we have
 \begin{equation*}
  \int_{t_0}^{t} e^{\lambda (s-t_0)} \big|\sigma(Y^{\epsilon,v_{\epsilon}}_{s;t_0,\xi^{\epsilon}}) \big|^{2} \d s \leq 2 \int_{t_0}^{t} e^{\lambda (s-t_0)} |\sigma(0) |^{2} \d s + 2 \lambda_3 \int_{t_0}^{t} e^{\lambda (s-t_0)} \int_{-\infty}^{0} \big|Y^{\epsilon,v_{\epsilon}}_{s;t_0,\xi^{\epsilon}}(\tau) \big|^{2} \mu_2(\d \tau) \d s.
 \end{equation*}
 Recall the definition of $\|\cdot \|_r$ and Proposition \ref{prop: cr} ii), we get 
 \begin{equation*}
  \int_{-\infty}^{0} \big|Y^{\epsilon,v_{\epsilon}}_{s;t_0,\xi^{\epsilon}}(\tau) \big|^{2} \, \mu_2(\d \tau) \leq \big\|Y^{\epsilon,v_{\epsilon}}_{t;t_0,\xi^{\epsilon}} \big\|_{r}^2 \int_{-\infty}^{0} e^{2r(t-s-\tau)} \, \mu_2(\d \tau) \leq e^{2r(t-s)} \, \mu_2^{(2r)} \big\|Y^{\epsilon,v_{\epsilon}}_{t;t_0,\xi^{\epsilon}} \big\|_{r}^2.
 \end{equation*}
 Applying the inequality $|a+b|^{q}\leq |a|^q+|b|^q$ for $q \in (0,1)$, we obtain
 \begin{align*}
  & \E \bigg[ \Big| \int_{t_0}^{t} e^{\lambda (s-t_0)} \, \big|\sigma(Y^{\epsilon,v_{\epsilon}}_{s;t_0,\xi^{\epsilon}}) \big|^{2} \, \d s \Big|^{3/4} \bigg]^{2/3} \\
  & \leq \frac{\sqrt{2} |\sigma(0)|}{\sqrt{\lambda}} \big( e^{\lambda (t-t_0)}-1 \big)^{1/2}+ \sqrt{\frac{2 \lambda_3 \mu_2^{(2r)}}{2r-\lambda}} \big( e^{2r(t-t_0)}-e^{\lambda(t-t_0)} \big)^{1/2} \E \Big[\big\|Y^{\epsilon,v_{\epsilon}}_{t;t_0,\xi^{\epsilon}} \big\|_{r}^{3/2}\Big]^{2/3}.
 \end{align*}
 Applying Proposition \ref{prop: cr} iii) and combining the above estimates, we obtain
 \begin{align*}
  & \E \| Y_{t;t_0,\xi^{\epsilon}}^{\epsilon,v_{\epsilon}} - Y_{t;t_0,\xi^{\epsilon}}^{0,v_{\epsilon}} \|_{r} \leq e^{-\lambda (t-t_0)/2} \E \sup_{t_0 \leq s \leq t} e^{\lambda (s-t_0)/2} |\eta^{\epsilon}(s)| \\
  & \quad \leq \sqrt{\epsilon} \, (3 \pi +1)^{2/3} e^{\frac{M}{2}} \Big( \frac{\sqrt{2} |\sigma(0)|}{\sqrt{\lambda}}+ \sqrt{\frac{2 \lambda_3 \mu_2^{(2r)}}{2r-\lambda}} e^{(2r-\lambda)(t-t_0)/2} \E \Big[\big\|Y^{\epsilon,v_{\epsilon}}_{t;t_0,\xi^{\epsilon}} \big\|_{r}^{3/2}\Big]^{2/3} \Big),
 \end{align*}
 where in the last step we used the fact $v_{\epsilon} \in \cA_{M}$. Using the uniform bounded estimate \eqref{eq: estimate-controlled-solution-2}, we know $Y_{t;t_0,\xi^{\epsilon}}^{\epsilon,v_{\epsilon}}-Y_{t;t_0,\xi^{\epsilon}}^{0,v_{\epsilon}}$ converge to zero in distribution. As the discussions at the beginning of the proof, the proof is complete.
\end{proof}

\subsection{Large Deviations Principle for stationary solution} \label{subsec: LDP for stationary solution}

In this subsection, we will prove the LDP for the stationary solution $\{\mathcal{Y}^{\epsilon}\}_{\epsilon>0}$ for crude cocycle $(U^{\epsilon},\theta)$ by proving its equivalence Laplace principle. In Theorem \ref{thm: stationary solution}, we have proved that for each $\epsilon>0$, there is a Borel measurable map $\mathcal{G}^{\epsilon}: \W \rightarrow \sC_r$ such that $\P$-a.s. $\mathcal{G}^{\epsilon}(W)(t)=\mathcal{Y}^{\epsilon}(t)$ for any $t \in \R$. Recall the measurable map $\mathcal{G}^{0}: \dot{H}^{1}(\R;\R^m) \rightarrow \sC_r$ defined in Section \ref{sec: Preliminary}. Lemma \ref{lem: variational representation} gives a variational representation for a certain functional of double-side Wiener process $W$ in $\R$, which is like \cite[Theorem 3.1]{Boue_variational_1998}. Thanks to these results, following \cite[Theorem 4.4]{Budhiraja_variational_2000}, we know that the following two conditions are sufficient for the Laplace principle of $\{\cY^{\epsilon}\}_{\epsilon>0}$. 
\begin{enumerate}
 \item [(B1)] Let $M >0$ and $\{v_{\epsilon}\}_{\epsilon>0} \subset{\cA}_{M}$. If $v_{\epsilon}$ converge to in distribution $v$ as $S_M$-valued random elements, then
 \begin{equation*}
  \mathcal{G}^{\epsilon} \Big(W(\cdot)+\frac{1}{\sqrt{\epsilon}} \int_{0}^{\cdot} v_{\epsilon}(s) \, \d s\Big) \rightarrow \mathcal{G}^{0} \Big(\int_{0}^{\cdot} v(s) \, \d s\Big) 
 \end{equation*}
 in distribution as $\epsilon \rightarrow 0$. 
 \item [(B2)] For each $M>0$, the set $\Gamma_{M,-\infty}:= \big\{ \mathcal{G}^{0} \big(\int_{0}^{\cdot} v(s) \d s \big);\, v \in S_M \big\}$ is compact in $\sC_r$.
\end{enumerate}


The condition (B2) has been proved in Lemma \ref{lem: proposition-skeleton-equation-1}. For each $v \in \cA_{M}$, we can define 
\begin{equation*}
 \cY^{\epsilon,v,\ast}:=\mathcal{G}^{\epsilon} \Big(W(\cdot)+\frac{1}{\sqrt{\epsilon}} \int_{0}^{\cdot} v (s) \, \d s\Big); \quad \cY^{0,v,\ast} := \mathcal{G}^{0} \Big(\int_{0}^{\cdot} v(s) \, \d s\Big).
\end{equation*}
Theorem \ref{thm: stationary solution} shows that $\mathcal{G}^{\epsilon}_{-n,\xi}(W(\cdot))$ converges to $\cY^{\epsilon}=\mathcal{G}^{\epsilon}(W(\cdot))$ in $L^2(\Omega, \sC_r)$ when $n \rightarrow +\infty$. By Girsanov's theorem and the fact $v \in \cA_M$, the probability measure $\P^{v,\epsilon}$ given by
\begin{equation*}
 \frac{\d \P^{v,\epsilon}}{\d \P} \Big|_{\mathcal{F}_t} = \exp \Big\{ \frac{1}{\sqrt{\epsilon}} \, \int_{-\infty}^{t} v(s) \, \d W(s) -\frac{1}{2 \epsilon} \int_{-\infty}^{t} |v(s)|^2 \, \d s\Big\} 
\end{equation*}
is equivalent with probability measure $\P$. Then, we obtain $\P$-a.s.,
\begin{equation*}
 \cY^{\epsilon,v,\ast} =\mathcal{G}^{\epsilon} \Big(W(\cdot)+\frac{1}{\sqrt{\epsilon}} \int_{0}^{\cdot} v(s) \, \d s\Big) = \lim_{n \rightarrow +\infty} \mathcal{G}^{\epsilon}_{-n,\xi} \Big(W(\cdot)+\frac{1}{\sqrt{\epsilon}} \int_{0}^{\cdot} v(s) \, \d s\Big). 
\end{equation*}
Lemma \ref{lem: truncated-controlled-equation} ii) shows that for each $t \in \R$, $Y^{\epsilon,v}_{t;-n,\xi}=\mathcal{G}^{\epsilon}_{-n,\xi} \big(W(\cdot)+\frac{1}{\sqrt{\epsilon}} \int_{0}^{\cdot} v(s) \, \d s\big)(t)$ is a Cauchy sequence in $L^1(\Omega, C_r)$. Let $\mathcal{Y}^{\epsilon,v_{\epsilon}}_{-n,\xi}(t):= Y^{\epsilon,v_{\epsilon}}_{t;-n,\xi}$ and recall the definition of $d_r$, we know
\begin{equation*}
 \P\mbox{-}a.s., \quad \cY^{\epsilon,v,\ast} =\mathcal{G}^{\epsilon} \Big(W(\cdot)+\frac{1}{\sqrt{\epsilon}} \int_{0}^{\cdot} v(s) \, \d s\Big) = \lim_{n \rightarrow +\infty} \mathcal{Y}^{\epsilon,v}_{-n,\xi}. 
\end{equation*}
There is a similar result for $\epsilon=0$ given in Lemma \ref{lem: well-posedness-skeleton-equation}. Thus, to prove the condition (B1), we only need the condition (A1) proved in Section \ref{subsec: uniform LDP} and the following uniform estimates.

\begin{enumerate}
 \item [(B-1.1)] For each $\{v_{\epsilon}\}_{\epsilon>0} \subset{\cA}_{M}$ and $\xi \in C_r$, let $\mathcal{Y}^{\epsilon,v_{\epsilon}}_{-n,\xi}(t):= Y^{\epsilon,v_{\epsilon}}_{t;-n,\xi}$, where $Y^{\epsilon,v_{\epsilon}}_{\cdot;-n,\xi}$ is the solution of Eq. \eqref{eq: truncated-controlled-equation}. Then $\{\mathcal{Y}^{\epsilon,v_{\epsilon}}_{-n,\xi}\}_{n \geq 1}\subset L^1(\Omega,\sC_r)$ is a Cauchy sequence and
 \begin{equation*}
  \lim_{n \rightarrow +\infty} \sup_{\epsilon} \E \, d_{r} \Big( \mathcal{G}^{\epsilon} \Big(W(\cdot)+\frac{1}{\sqrt{\epsilon}} \int_{0}^{\cdot} v_{\epsilon}(s) \, \d s\Big),\mathcal{Y}^{\epsilon,v_{\epsilon}}_{-n,\xi} \Big) =0. 
 \end{equation*} 
 \item [(B-1.2)] For each $v \in S_M$ and $\xi \in C_r$, it holds
 \begin{equation*}
  \lim_{n \rightarrow +\infty} \, d_{r} \Big( \mathcal{G}^{0} \Big(\int_{0}^{\cdot} v(s) \, \d s\Big) , \mathcal{G}_{-n,\xi}^{0} \Big(\int_{0}^{\cdot} v(s) \, \d s\Big) \Big)=0. 
 \end{equation*}
\end{enumerate}
The conditions (B-1.1) and (B-1.2) have been partially verified in Lemma \ref{lem: truncated-controlled-equation} and Lemma \ref{lem: well-posedness-skeleton-equation}, respectively. Now, we are ready to finalize its full proof in the following lemma.

\begin{lemma} \label{lem: uniform-controlled-equation}
 The conditions (B-1.1) and (B-1.2) hold for $\epsilon>0$ small enough.
\end{lemma}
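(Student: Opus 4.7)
My approach treats the two conditions separately: (B-1.2) follows essentially immediately from Lemma \ref{lem: well-posedness-skeleton-equation}, while (B-1.1) combines the pointwise-in-$t$ Cauchy estimate of Lemma \ref{lem: truncated-controlled-equation}~ii) with a Girsanov identification of the limit.

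For (B-1.2), the definitions $\mathcal{G}^{0}_{-n,\xi}(\int_{0}^{\cdot} v(s)\,\d s)(\cdot) = Y^{0,v}_{\cdot;-n,\xi}$ and $\mathcal{G}^{0}(\int_{0}^{\cdot} v(s)\,\d s)(\cdot) = \mathcal{Y}^{0,v,\ast}(\cdot)$ reduce the claim to showing $Y^{0,v}_{\cdot;-n,\xi} \to \mathcal{Y}^{0,v,\ast}$ in $\sC_r$, which is precisely the final assertion of Lemma \ref{lem: well-posedness-skeleton-equation}. So (B-1.2) is immediate.

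For (B-1.1), I would first derive a pathwise cocycle identity: for $m > n \geq 1$, the pathwise uniqueness implicit in Lemma \ref{lem: truncated-controlled-equation}~ii) (obtained by applying \eqref{eq: estimate-controlled-solution-diff} with $\xi_1 = \xi_2$) gives
\begin{equation*}
    Y^{\epsilon,v_{\epsilon}}_{t;-m,\xi} = Y^{\epsilon,v_{\epsilon}}_{t;-n,\,Y^{\epsilon,v_{\epsilon}}_{-n;-m,\xi}}, \qquad \forall\, t \geq -n, \qquad \P\text{-a.s.},
\end{equation*}
in analogy with the identity used in \eqref{eq: cauchy-sequence}. Applying \eqref{eq: estimate-controlled-solution-diff} at $t_0 = -n$ with $\xi_1 = Y^{\epsilon,v_{\epsilon}}_{-n;-m,\xi}$ and $\xi_2 = \xi$, together with the uniform $L^{3/2}$ bound \eqref{eq: estimate-controlled-solution-2}, yields a constant $C$ independent of $m$ and of $\epsilon \in (0,\epsilon_0)$ such that
\begin{equation*}
    \E\|Y^{\epsilon,v_{\epsilon}}_{k;-m,\xi} - Y^{\epsilon,v_{\epsilon}}_{k;-n,\xi}\|_{r} \leq C\,e^{-\lambda(k+n)/2}, \qquad \forall\, k \geq 1.
\end{equation*}
Using the subadditivity $\sqrt{\sum_k a_k} \leq \sum_k \sqrt{a_k}$ applied to the defining series of $\vertiii{\cdot}_r$, together with $(a^2 \wedge 1)^{1/2} = a \wedge 1 \leq a$, this pointwise estimate upgrades to
\begin{equation*}
    \E \vertiii{\mathcal{Y}^{\epsilon,v_{\epsilon}}_{-m,\xi} - \mathcal{Y}^{\epsilon,v_{\epsilon}}_{-n,\xi}}_{r} \leq \sum_{k=1}^{+\infty} \frac{C\,e^{-\lambda(k+n)/2}}{2^{k/2}} \leq C'\,e^{-\lambda n/2},
\end{equation*}
uniformly in $m$ and $\epsilon$. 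Hence $\{\mathcal{Y}^{\epsilon,v_{\epsilon}}_{-n,\xi}\}_{n \geq 1}$ is Cauchy in $L^{1}(\Omega;\sC_r)$ with a rate independent of $\epsilon$.

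The main obstacle is identifying the resulting $L^{1}$-limit with $\mathcal{G}^{\epsilon}(W(\cdot) + \tfrac{1}{\sqrt{\epsilon}}\int_{0}^{\cdot} v_{\epsilon}(s)\,\d s)$. Here I would invoke Girsanov's theorem: since $v_{\epsilon} \in \cA_M$, the shifted process $\tilde W := W + \tfrac{1}{\sqrt{\epsilon}}\int_{0}^{\cdot} v_{\epsilon}\,\d s$ is a two-sided Wiener process under an equivalent measure $\P^{v_{\epsilon},\epsilon}$, and the controlled equation \eqref{eq: truncated-controlled-equation} for $Y^{\epsilon,v_{\epsilon}}_{\cdot;-n,\xi}$ coincides (by substituting $\sqrt{\epsilon}\,\d\tilde W = \sqrt{\epsilon}\,\d W + v_{\epsilon}\,\d s$) with the uncontrolled SFDE \eqref{eq: truncated SFDEs epsilon} driven by $\tilde W$. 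By Lemma \ref{lem: truncated-measurable-map}, this forces $Y^{\epsilon,v_{\epsilon}}_{\cdot;-n,\xi} = \mathcal{G}^{\epsilon}_{-n}(\tilde W,\xi)$ $\P$-a.s. Applying Theorem \ref{thm: stationary solution} to $\tilde W$ on the equivalent probability space gives $\mathcal{G}^{\epsilon}_{-n}(\tilde W,\xi) \to \mathcal{G}^{\epsilon}(\tilde W)$ in $L^{2}(\P^{v_{\epsilon},\epsilon};\sC_r)$, hence in $\P$-probability by measure equivalence. A subsequence therefore converges $\P$-a.s., which identifies the $L^{1}(\Omega;\sC_r)$-limit of $\mathcal{Y}^{\epsilon,v_{\epsilon}}_{-n,\xi}$ with $\mathcal{G}^{\epsilon}(\tilde W) = \mathcal{G}^{\epsilon}(W + \tfrac{1}{\sqrt{\epsilon}}\int_{0}^{\cdot} v_{\epsilon}\,\d s)$ and completes (B-1.1); the uniformity in $\epsilon$ is already encoded in the Cauchy estimate above.
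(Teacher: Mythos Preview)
Your proposal is correct and follows essentially the same route as the paper: (B-1.2) is taken directly from Lemma \ref{lem: well-posedness-skeleton-equation}, and (B-1.1) proceeds via the cocycle identity $Y^{\epsilon,v_{\epsilon}}_{t;-m,\xi} = Y^{\epsilon,v_{\epsilon}}_{t;-n,Y^{\epsilon,v_{\epsilon}}_{-n;-m,\xi}}$, the contraction estimate \eqref{eq: estimate-controlled-solution-diff} combined with the uniform $L^{3/2}$ bound \eqref{eq: estimate-controlled-solution-2}, and finally the Girsanov identification of the limit with $\mathcal{G}^{\epsilon}(\tilde W)$ (which the paper carries out in the paragraph preceding the statement of the lemma rather than inside the proof). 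The only cosmetic difference is that the paper first reduces to a pointwise-in-$t$ estimate and then passes to $\vertiii{\cdot}_r$ via dominated convergence, whereas you sum the pointwise bounds directly using the subadditivity of the square root; both are valid.
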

\begin{proof}
 Lemma \ref{lem: well-posedness-skeleton-equation} and Lemma \ref{lem: truncated-controlled-equation} show that for each $v \in S_M$ and $\{v_{\epsilon}\}_{\epsilon > 0} \subset \cA_{M}$,
 \begin{align*}
  \sup_{n\geq 1} \big\|Y^{0,v}_{t;-n,\xi} \big\|_{r}^{2} & \leq 2 \, e^{2 M} \, \| \xi \|_{r}^2 + e^{2 M} \, C(\lambda); \\
  \sup_{n \geq 1} \sup_{0<\epsilon<\epsilon_0} \E \big[\| Y^{\epsilon,v_{\epsilon}}_{t;-n,\xi}\|_{r}^{3/2} \big]^{2/3} & \leq 2 \Tilde{C}_1(\epsilon_0,\lambda) \, e^{\frac{M}{2(1-\epsilon_2)}} \, \|\xi\|_{r} + 2\Tilde{C}_2(\epsilon_0,\lambda) \, e^{\frac{M}{2(1-\epsilon_2)}},
 \end{align*}
 for all $t \geq 0$, where $\epsilon_0,\lambda$ and constants $C(\lambda), \Tilde{C}_1(\epsilon_0,\lambda),\Tilde{C}_2(\epsilon_0,\lambda) $ are same as those in Lemma \ref{lem: well-posedness-skeleton-equation} and Lemma \ref{lem: truncated-controlled-equation}. Recall the definition of $d_{r}$, using the above uniform boundedness and the triangle inequality, to obtain the conditions (B-1.1) and (B-1.2), we only need to prove
 \begin{align}
  \big\|Y^{0,v}_{t;-n,\xi}- \mathcal{Y}^{0,v,\ast}(t) \big\|_{r}^{2} & \leq C \, e^{-\lambda (t+n)+ 3 M}; \nonumber \\
  \sup_{0<\epsilon<\epsilon_0} \E \, \big\| Y^{\epsilon,v_{\epsilon}}_{t;-n,\xi}- \mathcal{Y}^{\epsilon,v_{\epsilon},\ast}(t) \big\|_r & \leq C \, e^{\frac{-\lambda (t+n)}{2}+ (1+\frac{1}{1-\epsilon_2}) \frac{M}{2}}, \label{eq: estimate-controlled-solution-cauchy-0}
 \end{align}
 for any $t \geq -n$ and $n \in \N$, where constant $C>0$ only depends on the above uniform bound of $\{Y^{0,v}_{t;-n,\xi}\}_{t,n,\xi}$ and $\{ Y^{\epsilon,v_\epsilon}_{t;-n,\xi} \}_{t,n,\xi,\epsilon}$. The above first inequality comes from estimate \eqref{eq: estimate-skeleton-2} and Eq. \eqref{eq: skeleton equation 0}. The proof of the above second inequality is similar to Theorem \ref{thm: stationary solution}.
 
 For any $m>n$, $t \geq 0$ and $\tau \in (-n-t,0)$, $Y^{\epsilon,v_\epsilon}_{t;-m,\xi}(\tau)$ and $Y^{\epsilon,v_{\epsilon}}_{t;-n,Y^{\epsilon,v_{\epsilon}}_{-n;-m,\xi}}(\tau)$ satisfy the same equation, then the uniqueness of \eqref{eq: truncated-controlled-equation} shows that $Y^{\epsilon,v_\epsilon}_{t;-m,\xi}=Y^{\epsilon,v_{\epsilon}}_{t;-n,Y^{\epsilon,v_{\epsilon}}_{-n;-m,\xi}}$ almost surely. Due to $Y^{\epsilon,v_{\epsilon}}_{-n;-m,\xi}$ is $\cF_{-n}$-measurable, using Lemma \ref{lem: truncated-controlled-equation} ii), we have
 \begin{align*}
  \sup_{0<\epsilon<\epsilon_0} \E \big\| Y^{\epsilon,v_\epsilon}_{t;-m,\xi}- Y^{\epsilon,v_{\epsilon}}_{t;-n,\xi} \big\|_{r} & = \sup_{0<\epsilon<\epsilon_0} \E \big\| Y^{\epsilon,v_{\epsilon}}_{t;-n,Y^{\epsilon,v_{\epsilon}}_{-n;-m,\xi}}-Y^{\epsilon,v_{\epsilon}}_{t;-n,\xi} \big\|_{r} \\ 
  & \leq \Tilde{C}_3(\epsilon_0,\lambda) \, \sup_{0<\epsilon<\epsilon_0} \E \Big[ \big\| Y^{\epsilon,v_{\epsilon}}_{-n;-m,\xi}-\xi \big\|_{r}^{3/2} \Big]^{2/3} \, e^{\frac{-\lambda (t+n)+M}{2}}. 
 \end{align*}
 By the uniform boundedness of $\{ Y^{\epsilon,v_{\epsilon}}_{t;-n,\xi} \}_{t,n,\xi,\epsilon}$ in $L^{3/2} (\Omega, C_r)$, for any $m>n$, we obtain
 \begin{equation} \label{eq: estimate-controlled-solution-cauchy}
  \sup_{0<\epsilon<\epsilon_0} \E \big\| Y^{\epsilon,v_\epsilon}_{t;-m,\xi}- Y^{\epsilon,v_{\epsilon}}_{t;-n,\xi} \big\|_{r} \leq C \, e^{\frac{-\lambda (t+n)}{2}+\frac{2 - \epsilon_2}{1-\epsilon_2} \frac{M}{2}},
 \end{equation}
 where constant $C>0$ is independent of $m$. Thus, $\{ \mathcal{Y}^{\epsilon,v_{\epsilon}}_{-n,\xi}\}_{n \geq 1}$ is a Cauchy sequence in $L^{1} (\Omega, \sC_r)$. As mentioned in the beginning of this subsection, the limit of $\{ \mathcal{Y}^{\epsilon,v_{\epsilon}}_{-n,\xi}\}_{n \geq 1}$ in $L^1(\Omega, C_r)$ is exactly equal to $\mathcal{Y}^{\epsilon,v_{\epsilon},\ast}$. Hence $\{\mathcal{Y}^{\epsilon,v_{\epsilon},\ast}(t)\}_{t,\epsilon}$ is also uniform bounded in $L^1(\Omega, C_r)$ for all $t \in \R$. Taking $m \rightarrow +\infty$ in \eqref{eq: estimate-controlled-solution-cauchy}, we get estimate \eqref{eq: estimate-controlled-solution-cauchy-0}. The proof is completed.
\end{proof}

Now that all preparations have been made, it is time to prove Theorem \ref{thm: LDP-stationary solution}.

\begin{proof}[\bf Proof of Theorem \ref{thm: LDP-stationary solution}]
 Following \cite[Theorem 4.4]{Budhiraja_variational_2000}, we only need to prove the conditions (B1) and (B2). The condition (B2) has been proved in Lemma \ref{lem: proposition-skeleton-equation-1}. As mentioned above, we will use the condition (A1) proved in Section \ref{subsec: uniform LDP} and Lemma \ref{lem: uniform-controlled-equation} to obtain the condition (B2). 
 
 Let $M >0$ and $\{v_{\epsilon}\}_{\epsilon>0} \subset{\cA}_{M}$. If $v_{\epsilon}$ converge to in distribution $v$ as $S_M$-valued random elements, then for any bounded Lipschitz continuous functions $h: \sC_r \rightarrow \R$, we have
 \begin{align*}
  & \E |h(\cY^{\epsilon,v_{\epsilon},\ast})- h(\cY^{0,v,\ast})| \\
  & \leq \E | h(\cY^{\epsilon,v_{\epsilon},\ast})- h(\cY^{\epsilon,v_{\epsilon}}_{-n,\xi})| + \E | h(\cY^{\epsilon,v_{\epsilon}}_{-n,\xi})- h(\cY^{0,v}_{-n,\xi})| +\E | h(\cY^{0,v}_{-n,\xi})- h(\cY^{0,v,\ast})| \\
  & \leq L_{h} \Big( \sup_{0<\epsilon<\epsilon_0} \E \, d_{r} \big( \cY^{\epsilon,v_{\epsilon},\ast}, \cY^{\epsilon,v_{\epsilon}}_{-n,\xi} \big) + \E \, d_{r} \big(\cY^{0,v}_{-n,\xi}, \cY^{0,v,\ast} \big) \Big) + \E | h(\cY^{\epsilon,v_{\epsilon}}_{-n,\xi})- h(\cY^{0,v}_{-n,\xi})|,
 \end{align*}
 where $ \cY^{0,v}_{-n,\xi}(\cdot):=Y^{0,v}_{\cdot; -n \xi} $ and $\cY^{\epsilon,v_{\epsilon}}_{-n,\xi}(\cdot):=Y^{\epsilon,v_{\epsilon}}_{\cdot;-n,\xi} $ are the solution map of Eqs. \eqref{eq: skeleton equation} and \eqref{eq: truncated-controlled-equation}, respectively, and $L_h$ is the Lipschitz coefficient of $h$. The condition (A1) yields
 \begin{equation*}
  \limsup_{\epsilon \rightarrow 0}\E | h(\cY^{\epsilon,v_{\epsilon},\ast})- h(\cY^{0,v,\ast})| \leq L_{h} \Big( \sup_{0<\epsilon<\epsilon_0} \E \, d_{r} \big( \cY^{\epsilon,v_{\epsilon},\ast}, \cY^{\epsilon,v_{\epsilon}}_{-n,\xi} \big) + \E \, d_{r} \big(\cY^{0,v}_{-n,\xi}, \cY^{0,v,\ast} \big) \Big).
 \end{equation*}
 Using (B1-1) and (B1-2) and taking $n \rightarrow +\infty$, we obtain 
 \begin{equation*}
  \limsup_{\epsilon \rightarrow 0}\E | h(\cY^{\epsilon,v_{\epsilon},\ast})- h(\cY^{0,v,\ast})|=0.
 \end{equation*}
 Thus, $\cY^{\epsilon,v_{\epsilon},\ast}$ converge in distribution to $\cY^{0,v,\ast}$ as $\epsilon \rightarrow 0$, and the condition (B1) is verified. Following \cite[Theorem 4.4]{Budhiraja_variational_2000}, $\{ \cY^{\epsilon}\}_{\epsilon>0}$ satisfy the Laplace principle with rate function $\cI_{-\infty}$
\end{proof}

\appendix

\section{Analysis tool} \label{sec: analysis tools}

In this section, we will show some useful analysis tools. We first introduce a number of properties of the Polish space $C_r$, see \cite{Hino_Functionaldifferential_1991} for details.
\begin{proposition}\label{prop: cr}
 The space $C_{r}$ has the following properties.
 \begin{itemize}
  \item [i)] The space $C_{r}$ is complete and separable. If $\{\phi^{n}\}$ is a Cauchy sequence in $C_{r}$, then exists a uniqueness $\phi\in C_{r}$ such that $||\phi^{n}-\phi||_{r} \rightarrow 0$ as $n\to\infty$. As a consequence, $\{\phi^{n}(\tau)\}_{n}$ converges to $\phi(\tau)$ uniformly in all compact subset $K \in (-\infty,0]$.
  \item [ii)] For fixed $T \in \R$ and function $\varphi \in C((-\infty,T])$, for each $t \in (-\infty,T)$, we can define
   \begin{equation*}
    \varphi_{t}(\tau) := \varphi(t+\tau) , \quad \forall \, \tau \in (-\infty,0].
   \end{equation*}
   If $\varphi_{T} \in C_{r}$, then $\varphi_t $ is a $C_r$-valued continuous function and 
   \begin{equation*}
    \| \varphi_{t} \|_{r}^2 \leq e^{2r (T-t)} \, \| \varphi_{T} \|_{r}^2, \quad \forall \, t \leq T.
   \end{equation*}
  \item [iii)] Let $\varphi \in C((-\infty,T])$ and $-\infty<t_0<t\leq T$. If $\varphi_{t_0} \in C_r$ and $0<\lambda \leq 2r$, then 
   \begin{equation*}
    \| \varphi_t \|_{r}^2 \leq e^{-2r (t-t_0)} \, \| \varphi_{t_0}\|_r^2 + e^{-\lambda t} \, \sup_{t_0 \leq s \leq t} e^{\lambda s} |\varphi(s)|^2.
   \end{equation*}
\end{itemize}
\end{proposition}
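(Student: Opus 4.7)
The common tool for all three parts is the change of variable $s = t+\tau$, which rewrites the weighted supremum as $\|\varphi_t\|_r = \sup_{s \leq t} e^{r(s-t)}|\varphi(s)|$. This absolute-time form makes it straightforward to compare norms at different shifts $t$ and to control tails, so I would open the proof by recording it.

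For (i), the plan is to prove completeness first and then produce a countable dense family. Given a Cauchy sequence $\{\phi^n\} \subset C_r$, the pointwise inequality $|\phi^n(\tau)-\phi^m(\tau)| \leq e^{-r\tau}\|\phi^n-\phi^m\|_r$ yields a pointwise limit $\phi(\tau)$; sending $m \to \infty$ in the Cauchy supremum then gives $\|\phi^n - \phi\|_r \to 0$. Continuity of $\phi$ on compact subsets of $(-\infty,0]$ follows because $e^{r\tau}$ is bounded below on any $[-M,0]$, and to verify $\phi \in C_r$ I would invoke the uniform-limit-interchange lemma: since $e^{r\tau}\phi^n(\tau) \to e^{r\tau}\phi(\tau)$ uniformly on $(-\infty,0]$ and each $e^{r\tau}\phi^n(\tau)$ has a limit at $-\infty$, the limit $e^{r\tau}\phi(\tau)$ does too. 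For separability I would take the countable family of piecewise-linear rational-valued functions on $[-N,0]$ with $N \in \mathbb{Q}_{+}$, extended for $\tau < -N$ by the rule $\phi(\tau) := e^{-r(\tau+N)}\phi(-N)$ so that $e^{r\tau}\phi(\tau)$ is constant on the tail; given an arbitrary $\phi \in C_r$, one chooses $N$ large enough that $|e^{r\tau}\phi(\tau) - L|$ is small for $\tau \leq -N$ (with $L$ the fading-memory limit), then approximates $\phi|_{[-N,0]}$ in the uniform norm by a rational piecewise-linear function and combines.

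For (ii), the norm bound $\|\varphi_t\|_r \leq e^{r(T-t)}\|\varphi_T\|_r$ is immediate from the change of variable by factoring out $e^{r(T-t)}$, and $\varphi_t \in C_r$ follows by applying the same substitution to the fading-memory condition of $\varphi_T$. For continuity of $t \mapsto \varphi_t$, I would split $\sup_{\tau \leq 0} e^{r\tau}|\varphi(t_n+\tau) - \varphi(t+\tau)|$ at $\tau = -M$: the compact portion vanishes as $n \to \infty$ by uniform continuity of $\varphi$ on a bounded interval, and the tail is bounded by $2 e^{-rM}$ times the $C_r$-norms of $\varphi_{t_n}$ and $\varphi_t$, which are uniformly controlled by the bound just proved. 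For (iii), I would split $\|\varphi_t\|_r^2 = \sup_{s \leq t} e^{2r(s-t)}|\varphi(s)|^2$ at $s = t_0$: the part $s \leq t_0$ equals exactly $e^{-2r(t-t_0)}\|\varphi_{t_0}\|_r^2$, and for $t_0 < s \leq t$ I would factor $e^{2r(s-t)}|\varphi(s)|^2 = e^{-2rt + (2r-\lambda)s}\cdot e^{\lambda s}|\varphi(s)|^2$ and use $(2r-\lambda)s \leq (2r-\lambda)t$, valid since $0 < \lambda \leq 2r$ and $s \leq t$, to bound the prefactor by $e^{-\lambda t}$. Summing the two pieces gives the claim.

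The only step with any real subtlety is the separability construction in (i), where one must prescribe an explicit countable family whose members actually sit in $C_r$ and approximate arbitrary elements including the fading tail; the estimates in (ii) and (iii) are essentially one-line calculations once the change of variable is in place, and the continuity part of (ii) is a routine uniform-continuity-plus-tail split.
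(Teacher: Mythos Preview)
Your proposal is correct and matches the paper's approach: the paper defers part (i) entirely to \cite{Hino_Functionaldifferential_1991}, and for (ii) and (iii) its (commented-out) calculations use exactly your change of variable $s=t+\tau$ followed by the split at $s=t_0$ with the same factorization $e^{2r(s-t)} = e^{-2rt+(2r-\lambda)s}e^{\lambda s}$. Your write-up is simply more detailed, especially in supplying the completeness and separability arguments that the paper outsources to the reference.
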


The following Lemma can be found in the proof of \cite[Theorem 3.2]{Wu_Stochastic_2017}.
\begin{lemma} \label{lem: estimate for mu}
 For any $0<\lambda<2r$, $t \geq 0$ and $\phi_{t} := \left\{ \phi(t+\tau); \, \tau \in (-\infty,0] \right\} \in C_r$, it holds
 \begin{equation*}
  \int_{0}^{t}\int_{-\infty}^{0}e^{\lambda s} \vert \phi(s+\tau) \vert^{2} \, \mu_{j}(\d \tau) \, \d s \leq \frac{\mu_{j}^{(2r)}}{2r-\lambda} \|\phi_{0}\|_{r}^{2} + \mu_{j}^{(2r)} \int_{0}^{t} e^{\lambda s} \vert \phi(s)|^{2} \, \d s, 
 \end{equation*}
 for each $ {j}=1,2$, where $\mu_j^{(2r)}$ defined as \eqref{def: mu}.
\end{lemma}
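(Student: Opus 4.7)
The plan is to prove the estimate by a standard Fubini argument combined with a case split on the sign of $s+\tau$. First I would fix $\tau\in(-\infty,0]$ and bound the inner integral $\int_0^t e^{\lambda s}|\phi(s+\tau)|^2\,\d s$; then integrate in $\tau$ against $\mu_j$ and swap the order of integration by Fubini (legitimate since the integrand is non-negative) to recover the double integral on the left-hand side.

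For the bound at fixed $\tau$, I would split $[0,t]$ at $s=-\tau$ (with the obvious adjustment if $-\tau\geq t$). On the ``history'' piece $s\in[0,(-\tau)\wedge t]$ one has $s+\tau\leq 0$, so the fading-memory norm gives $|\phi(s+\tau)|\leq e^{-r(s+\tau)}\|\phi_0\|_r$, whence
\[
\int_0^{(-\tau)\wedge t} e^{\lambda s}|\phi(s+\tau)|^2\,\d s \leq e^{-2r\tau}\|\phi_0\|_r^2 \int_0^{(-\tau)\wedge t} e^{(\lambda-2r)s}\,\d s \leq \frac{e^{-2r\tau}}{2r-\lambda}\|\phi_0\|_r^2,
\]
using $\lambda<2r$ to keep the geometric integral finite. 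On the ``current'' piece $s\in[(-\tau)\vee 0,\,t]$, which is nonempty only when $-\tau<t$, I would change variables $u=s+\tau$ to get
\[
\int_{-\tau}^{t} e^{\lambda s}|\phi(s+\tau)|^2\,\d s = e^{-\lambda\tau}\int_0^{t+\tau} e^{\lambda u}|\phi(u)|^2\,\d u \leq e^{-2r\tau}\int_0^{t} e^{\lambda u}|\phi(u)|^2\,\d u,
\]
where the last inequality dominates $t+\tau$ by $t$ and uses that $\tau\leq 0$ together with $\lambda<2r$ forces $e^{-\lambda\tau}\leq e^{-2r\tau}$.

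Summing the two pieces produces the bound
\[
\int_0^t e^{\lambda s}|\phi(s+\tau)|^2\,\d s \leq e^{-2r\tau}\Big( \tfrac{1}{2r-\lambda}\|\phi_0\|_r^2 + \int_0^t e^{\lambda u}|\phi(u)|^2\,\d u \Big),
\]
uniformly in $\tau\leq 0$. Integrating against $\mu_j(\d\tau)$ on $(-\infty,0]$ and recognising $\int_{-\infty}^0 e^{-2r\tau}\mu_j(\d\tau)=\mu_j^{(2r)}$, which is finite by the assumption $\mu_j\in\mathcal{M}_{2r}$, yields exactly the right-hand side of the claim, while a final Fubini swap on the left produces the stated double integral.

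The only delicate point I foresee is the inequality $e^{-\lambda\tau}\leq e^{-2r\tau}$ on the current piece, where one must keep track of the fact that $\tau\leq 0$ reverses the comparison of the exponents $\lambda$ and $2r$; beyond this, the argument reduces to a single geometric integral and an application of Fubini, so I do not anticipate any genuine obstacle.
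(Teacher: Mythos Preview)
Your argument is correct. The paper does not supply its own proof of this lemma but simply cites \cite[Theorem 3.2]{Wu_Stochastic_2017}, and the Fubini-plus-splitting approach you outline (history piece $s+\tau\leq 0$ controlled by $\|\phi_0\|_r$, current piece handled by the change of variables $u=s+\tau$ and the bound $e^{-\lambda\tau}\leq e^{-2r\tau}$) is exactly the standard computation one finds there.
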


The following stochastic Gr\"onwall inequality has been proved in \cite{Scheutzow_stochastic_2013}.

\begin{lemma}\label{lem: Stochastic Gronwall}
  Let $Z$ and $H$ be nonnegative, adapted processes with continuous paths and assume that $\psi$ is nonnegative and progressively measurable. Let $M$ be a continuous local martingale starting at $0$. If the following condition holds for all $t\geq 0$,
  \begin{equation*}
  Z(t)\leq\int_{0}^{t}\psi(s) \, Z(s)\,\mathrm{d}s+M(t)+H(t), 
  \end{equation*}
  then for $p\in(0,1)$, $p_1 \in (1,1/p)$ and $p_2= \frac{p_1}{p_1-1}$, we have
  \begin{equation*}
  \E \sup_{0 \leq s \leq t} Z^{p}(s)\leq \Big( (c_{p} p_1 +1)\, \E\big[\sup_{0 \leq s \leq t} H(s) \big]^{p p_1}\Big)^{1/p_1} \left(\E \exp \Big\{p p_2 \int_{0}^{t}\psi(s)\,\d s \Big\}\right)^{1/p2} ,
  \end{equation*}
  where $c_p := (4 \wedge \frac{1}{p}) \frac{\pi p}{\sin(\pi p)}>0$.
\end{lemma}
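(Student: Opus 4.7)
My plan follows Scheutzow's strategy in the cited paper and proceeds in three main steps. \textbf{Step 1 (Localization and pathwise Gr\"onwall).} By a standard localization argument --- introducing stopping times $\tau_n \uparrow \infty$ so that $M^{\tau_n}$ is a bounded (hence true) martingale and $Z, H, \psi$ are bounded on $[0, t \wedge \tau_n]$ --- combined with Fatou's lemma, it suffices to prove the inequality under boundedness. Writing $A(t) := \int_0^t \psi(s) Z(s) \, \d s$ and $\Psi(t) := \int_0^t \psi(s) \, \d s$, the hypothesis reads $Z(t) \leq A(t) + M(t) + H(t)$. Combining this with the ODE-type differential bound $A'(t) = \psi(t) Z(t) \leq \psi(t)(A(t) + M^*(t) + H^*(t))$ and integrating via the factor $e^{-\Psi}$ gives the pathwise estimate
\begin{equation*}
  Z^*(t) := \sup_{0 \leq s \leq t} Z(s) \leq \bigl(M^*(t) + H^*(t)\bigr) e^{\Psi(t)},
\end{equation*}
where $M^*, H^*$ denote the respective running suprema.

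\textbf{Step 2 (H\"older decoupling).} Raising to the power $p$, taking expectations, and applying H\"older's inequality with the conjugate pair $(p_1, p_2)$ decouples the martingale/forcing factor from the exponential factor:
\begin{equation*}
  \E\bigl[Z^{*p}(t)\bigr] \leq \bigl(\E[(M^*(t) + H^*(t))^{pp_1}]\bigr)^{1/p_1} \bigl(\E[e^{pp_2 \Psi(t)}]\bigr)^{1/p_2}.
\end{equation*}
Since $p_1 \in (1, 1/p)$ forces $pp_1 \in (p, 1)$, subadditivity of $x \mapsto x^{pp_1}$ on $[0,\infty)$ yields $(M^* + H^*)^{pp_1} \leq M^{*pp_1} + H^{*pp_1}$, reducing the task to a separate estimate of $\E[M^{*pp_1}]$ in terms of $\E[H^{*pp_1}]$.

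\textbf{Step 3 (Sharp Lenglart bound).} The crux --- and the main obstacle --- is obtaining a sharp inequality of the form $\E[M^{*pp_1}] \leq c_p \, p_1 \, \E[H^{*pp_1}]$. The key structural observation is that $Z \geq 0$ together with the hypothesis forces $M \geq -(A + H)$, so that $M + A + H^*$ equals a local martingale plus a non-decreasing compensator and is nonnegative, i.e.\ a nonnegative local submartingale. A sharp Lenglart-type inequality --- obtained through the Dambis--Dubins--Schwarz time-change (representing the stopped $M$ as a Brownian motion run up to a random time) followed by Doob's $L^q$ maximal inequality at the sharp endpoint $q = 1/(pp_1) > 1$ --- produces exactly the constant $c_p = (4 \wedge 1/p) \pi p / \sin(\pi p)$, with the factor $p_1$ arising from the exponent conversion in Doob's bound. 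Adding this to the direct $\E[H^{*pp_1}]$ contribution from the $H^*$ term yields the overall $(c_p p_1 + 1)$ factor in the statement.
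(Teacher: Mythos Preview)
The paper does not prove this lemma; it simply cites Scheutzow. Your three-step outline (pathwise Gr\"onwall, H\"older decoupling, martingale maximal bound) matches the structure of Scheutzow's argument, and Steps~1--2 are essentially correct.

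Step~3, however, has a real gap. The observation that $M + A + H^*$ is a nonnegative local submartingale is true but does not deliver a bound of the form $\E[(M^*)^{pp_1}] \leq c\,\E[(H^*)^{pp_1}]$: a nonnegative submartingale can have arbitrarily large running supremum, and here the increasing part $A = \int_0^\cdot \psi Z\,\d s$ depends on $Z$ and hence on $M$ itself, so any attempt to control $M^*$ through the inequality $-M \leq A + H$ is circular --- you never eliminate $A$. The actual fix is to sharpen Step~1 rather than Step~3: an integration by parts inside the pathwise Gr\"onwall yields
\[
Z(t) \;\leq\; e^{\Psi(t)}\bigl(\tilde M(t) + H^*(t)\bigr), \qquad \tilde M(t) := \int_0^t e^{-\Psi(s)}\,\d M(s),
\]
with $\tilde M$ a continuous local martingale starting at $0$. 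Now $Z \geq 0$ gives directly $-\tilde M \leq H^*$ with no $A$-term at all, and the one-sided martingale maximal inequality applies cleanly to $\tilde M$ (at exponent $pp_1$, which is why the constant should really read $c_{pp_1}$ rather than $c_p\,p_1$). A secondary inaccuracy: the constant $\pi p/\sin(\pi p)$ does not come from Doob's $L^q$ maximal inequality as you claim, but from the explicit gambler's-ruin distribution $\P\bigl(\sup_{[0,\tau_a]} B > x\bigr) = a/(a+x)$ for Brownian motion stopped upon hitting $-a$, integrated against $q x^{q-1}\,\d x$.
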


Kurtz in \cite[Theorem 3.14]{Kurtz_YamadaWatanabeEngelbert_2007} shows the general Yamada–Watanabe–Engelbert Theorem for the compatible solution. Following this result and \cite{Rockner_YamadaWatanabe_2008}, we can get the similar result for $\cF_{t}$-adapted solution to \eqref{eq: truncated SFDEs epsilon} and \eqref{eq: stationary solution}, which are driven by double-side Wiener process $W$.
\begin{lemma}\label{lem: Yamada-Watanabe}
 Let $S=\W $ (or $ \W \times C_r$) and $\mathcal{P}_{W}(\sC_{r} \times S)$ be the Borel probability space on $\sC_r \times S$ satisfying the marginal distribution on $S$ is $\P \circ W^{-1}$ (or $ \P \circ (W,\xi)^{-1}$). Let $S_{\Gamma} \subset \mathcal{P}_{W}(\sC_r \times S)$ denote the collection of $\cF_{t}$-adapted solution measures with the constraint $\Gamma$. If $S_{\Gamma} $ is a convex non-empty set, then the following are equivalent: 
 \begin{itemize}
  \item [i)] Pathwise uniqueness holds for $\cF_{t}$-adapted weak solutions.
  \item [ii)] Joint uniqueness in law holds, and there exists a $\cF_{t}$-adapted strong solution.
 \end{itemize}
\end{lemma}
\begin{proof}
 ii) implies i) is immediate, so we focus on obtaining ii) from i). 
  
 Let $Y=W$ (or $(W, \xi)$). Let $\zeta_1 $ and $\zeta_2$ uniformly distributed on $[0,1]$ and $\zeta_1$, $\zeta_2$ and $Y$ are independent. Then by \cite[Lemma A.1]{Kurtz_YamadaWatanabeEngelbert_2007}, for any $\mu_1,\mu_2 \in S_{\Gamma}$, there exists map $F_1: S \times [0,1]$ and $F_2: S \times [0,1]$ such that $(F_{1}(Y, \zeta_1),Y)$ and $(F_{2}(Y, \zeta_2),Y)$ has the distribution $\mu_1$ and $\mu_2$, respectively. Let $\mathcal{B}_{t}(\sC_r)$ be the $\sigma$-algebra generated by all maps $\pi_{s}: \sC_r \rightarrow C_r$, $s \leq t$, where $\pi_{s} z := z(s)$, $z \in \sC_r$. Next, we will show for any $\mathcal{B}_{t}(\sC_r)$ measurable map $f : \sC_r \rightarrow \R$,
 \begin{equation*}
  (S,[0,1]) \ni (y,s) \mapsto f(F_{1}(y, s))= \int_{\sC_r} f(z) K_{F_1}(y,s, \d z)
 \end{equation*}
 is $\mathcal{B}_{t}(\W) \otimes \mathcal{B}([0,1])$ (defined similar as $\mathcal{B}_{t}(\sC_r)$) measurable, where $K_{F_1}(y,s,z)$ is a transition kernel function. It is sufficient to prove that for any $B \in \mathcal{B}_{t}(\sC_r)$, $A_1 \in \mathcal{B}_{t}(\W)$ and 
 \begin{align*}
  A_2 \in \sigma \big( & \{ w(t_1)-w(t) \in D_1, w(t_2)-w(t_1) \in D_2, \cdots, w(t_k)-w(t_{k-1}) \in D_k \big. \\
  & \quad \big. : \, t \leq t_1 < t_2 < \cdots <t_k, \, D_1,D_2, \cdots D_k \in \mathcal{B}(\R^m), \, w \in \W \} \big)
 \end{align*}
 the following identity holds
 \begin{align} \label{eq: adapted}
  & \int_{S \times [0,1]} \mathrm{1}_{ A_1 \cap A_2}(\Pi y) \, K_{F_1}(y,s,B) \, \P \circ Y^{-1} (\d y) \, \d s \nonumber \\
  &= \int_{S \times [0,1]} \mathrm{1}_{ A_1 \cap A_2}(\Pi y) \, \E [K_{F_1}(\cdot,B) | \mathcal{B}_t(\W) \otimes \mathcal{B}([0,1])] \, \P \circ Y^{-1} (\d y) \, \d s,
 \end{align}
 where $\Pi: Y \mapsto W$, since all $A_1 \cap A_2$ generate $\cF$. The left-hand side of \eqref{eq: adapted} is equal to
 \begin{equation*}
  \int_{\Omega} \mathrm{1}_{ A_1 \cap A_2}(W) \mathrm{1}_{B}(Z) \d \P= \P \circ W^{-1} (A_2) \int_{\Omega} \mathrm{1}_{ A_1 }(W) \mathrm{1}_{B}(Z) \d \P,
 \end{equation*}
 where $(Z,W)$ is a $\cF_t$-adapted weak solution and the last step used $\mathrm{1}_{A_2}(W)$ is independent of $\mathcal{F}_{t}$. Due to $A_2$ is independent $\mathcal{B}_{t}(\W)$, the right-hand side of \eqref{eq: adapted} is equal to
 \begin{align*}
  & \int_{S } \mathrm{1}_{ A_2}(\Pi y) \, \P \circ Y^{-1} (\d y) \times \int_{S \times [0,1]} \mathrm{1}_{ A_1 }(\Pi y) \, \E [K_{F_1}(\cdot,B) | \mathcal{B}_t(\W) \otimes \mathcal{B}([0,1])] \, \P \circ Y^{-1} (\d y) \, \d s \\
  & \quad = \P \circ W^{-1} (A_2) \times \int_{S \times [0,1]} \mathrm{1}_{ A_1 }(\Pi y) \, K_{F_1}(\cdot,B) \, \P \circ Y^{-1} (\d y) \, \d s \\
  & \quad = \P \circ W^{-1} (A_2) \int_{\Omega} \mathrm{1}_{ A_1 }(W) \mathrm{1}_{B}(Z) \d \P.
 \end{align*} 
 Thus, \eqref{eq: adapted} holds, and furthermore $f(F_1(Y,\zeta_1))$ is $\mathcal{F}_{t} \otimes \sigma(\zeta_1)$ measurable. The similar result holds for $F_2(Y,\zeta_2)$.
 By pathwise uniqueness and \cite[Lemma A.2]{Kurtz_YamadaWatanabeEngelbert_2007}, there exists Borel measurable $F : S \rightarrow \sC_r$ such that $\P$-a.s. $F(Y)=F_{1}(Y, \zeta_1)=F_{2}(Y, \zeta_2)$, then $F(Y)$ is the strong solution. By the measurability properties of $F_1(Y,\zeta_1)$, we obtain that $F(Y)$ is $\mathcal{F}_{t}$-adapted.
\end{proof}

Lehec in \cite[Theorem 9]{Lehec_Representation_2013} gives a variational representation for a certain functional of Wiener process in $[0,+\infty)$, which is a generalized version of Bou\'e-Dupuis formula \cite[Theorem 3.1]{Boue_variational_1998}. By the same way as \cite[Theorem 9]{Lehec_Representation_2013}, we have the following variational representation for a certain functional of double-side Wiener process $W$ in $(-\infty,+\infty)$. 

\begin{lemma}[The variational representation.] \label{lem: variational representation}
 Let $f$ be a bounded Borel measurable function mapping $\W$ into $\R$. Then
 \begin{equation*}
  - \log \E \, e^{-f (W)} = \inf_{v \in \mathcal{A}} \E \Big\{ \frac{1}{2} \int_{-\infty}^{+\infty} |v(s)|^2 \, \d s + f \big( W+ \int_{0}^{\cdot} v(s) \, \d s \big) \Big\},
 \end{equation*}
 where $\mathcal{A}$ is defined in \eqref{def: variational space}.
\end{lemma}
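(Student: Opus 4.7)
The plan is to follow the approach of Lehec \cite[Theorem 9]{Lehec_Representation_2013} for the one-sided Wiener process, adapting it to the two-sided index set. The two essential ingredients are Girsanov's theorem for the two-sided Wiener process $W$ on $\R$ and a Clark--Ocone-type predictable representation of $L^{1}$-martingales in the filtration $(\cF_{t})_{t\in\R}$.

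For the upper bound $-\log \E e^{-f(W)} \leq \E\big[\tfrac{1}{2}\int |v|^{2}\,\d s + f(W + \int_{0}^{\cdot} v\,\d s)\big]$, I first approximate $v \in \cA$ by bounded, compactly supported $v_{n} \in \cA$ using $\int_{\R}|v|^{2}\d s<\infty$ almost surely, and reduce to this case by monotone convergence via the boundedness of $f$. For such $v_{n}$, Novikov's condition holds and
\begin{equation*}
    Z(v_{n}) := \exp\Big\{-\int_{-\infty}^{+\infty} v_{n}(s)\,\d W(s) - \tfrac{1}{2}\int_{-\infty}^{+\infty} |v_{n}(s)|^{2}\,\d s\Big\}
\end{equation*}
is the density of a probability measure $\mathbb{Q}_{v_{n}} := Z(v_{n}) \cdot \P$. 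Applying the classical Girsanov theorem on $[-T, T]$ and letting $T \to +\infty$, the predictability of $v_{n}$ with respect to $\cF_{t}$ ensures that under $\mathbb{Q}_{v_{n}}$ the shifted process $W(\cdot) + \int_{0}^{\cdot} v_{n}(s)\,\d s$ is a two-sided standard Wiener process. Combining this with the Donsker--Varadhan variational duality
\begin{equation*}
    -\log \E^{\P} e^{-f(W)} \leq \E^{\mathbb{Q}_{v_{n}}} f(W) + H(\mathbb{Q}_{v_{n}} \,|\, \P) = \E^{\P}\Big[f\big(W + \textstyle\int_{0}^{\cdot} v_{n}\,\d s\big) + \tfrac{1}{2}\int_{\R} |v_{n}(s)|^{2}\,\d s\Big],
\end{equation*}
and sending $n \to \infty$ yields the upper bound.

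For the matching lower bound, I construct a drift that attains the infimum. Consider the positive $\P$-martingale $P_{t} := \E^{\P}[e^{-f(W)} \,|\, \cF_{t}]$, which satisfies $P_{-\infty} = \E^{\P} e^{-f(W)}$ (a deterministic constant, by triviality of $\cap_{t}\cF_{t}$ for the two-sided Wiener filtration) and $P_{+\infty} = e^{-f(W)}$. The Clark--Ocone representation extends to the two-sided Wiener filtration by the same argument as on the half-line, yielding a predictable $\eta$ with $\d P_{t} = \eta(t)\,\d W(t)$; setting $\xi := \eta/P$ and applying It\^o's formula to $\log P$ gives
\begin{equation*}
    -\log \E^{\P} e^{-f(W)} = f(W) + \int_{-\infty}^{+\infty} \xi(s)\,\d W(s) - \tfrac{1}{2}\int_{-\infty}^{+\infty} |\xi(s)|^{2}\,\d s.
\end{equation*}
Following Lehec, the optimal drift is then identified by a change of measure: under $\mathbb{Q}^{*} := (P_{+\infty}/P_{-\infty})\cdot\P$, the process $\tilde{W}(\cdot) := W(\cdot) - \int_{0}^{\cdot} \xi(s)\,\d s$ is a two-sided Brownian motion, and taking $v^{*}$ to be $\xi$ expressed as a functional of $\tilde{W}$ and substituting into the right-hand side of the variational formula yields equality, so the infimum is attained at $v^{*}$.

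The main technical obstacle is carrying out the required stochastic calculus on all of $\R$: defining $\int_{\R} v\,\d W$, verifying Girsanov's theorem, and establishing the Clark--Ocone representation in a filtration with no initial time. These issues are resolved by localization to $[-T, T]$: the classical one-sided results apply with $W(-T)$ treated as a random initial condition, and the limits $T \to +\infty$ are controlled by the boundedness of $f$ (for the martingale arguments), the $L^{2}(\R)$ assumption built into $\cA$ (for the stochastic integrals), and dominated convergence throughout.
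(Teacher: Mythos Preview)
Your proposal is correct and follows exactly the approach the paper takes: the paper simply defers to Lehec \cite[Theorem 9]{Lehec_Representation_2013}, and you have spelled out precisely that argument (Girsanov plus Donsker--Varadhan for the upper bound, martingale representation of $\E[e^{-f(W)}\mid\cF_t]$ for the lower bound) with the necessary localization to $[-T,T]$ to handle the two-sided index set. There is nothing to add.
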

\begin{proof}
 The proof is similar to \cite[Theorem 9]{Lehec_Representation_2013}. Let $\gamma $ be the Wiener measure and probability measure $\upsilon \ll \gamma$. By Donsker-Varadhan variational formula of relative entropy \cite[Lemma 1.4.3]{Dupuis_Weak_1997}, we have the Legendre duality:
 \begin{equation*}
  H(\upsilon| \gamma) := \int_{\Omega} \frac{\d \upsilon}{ \d \gamma} \log \big( \frac{\d \upsilon}{ \d \gamma} \big) \, \d \gamma = \sup_{f} \Big( \int_{\Omega} f \d \upsilon -\log \big( \int_{\Omega} e^{f} \d \gamma \big) \Big),
 \end{equation*}
 where the maximum is taken over all bounded Borel measurable function $f: \W \rightarrow \R$.
 
 Using the Legendre duality and Girsanov theorem, we obtain the corresponding version of \cite[Proposition 1 and Lemma 8]{Lehec_Representation_2013}. This allows us to transform the problem into proving the corresponding version of \cite[Theorem 7]{Lehec_Representation_2013}. Specifically, for all $\upsilon \in S$, we need to prove
\begin{equation} \label{relative entropy}
 H(\upsilon| \gamma) = \sup_{f} \Big( \int_{\Omega} f \d \upsilon -\log \big( \int_{\Omega} e^{f} \d \gamma \big) \Big)= \min_{v} \Big( \frac{1}{2} \int_{-\infty}^{+\infty} |v(s)|^2 \, \d s \Big),
\end{equation}
 where the minimum is taken over all $v \in \mathcal{A}$ such that $W+ \int_{0}^{\cdot} v(s) \, \d s$ has law $\upsilon$. The set $S$ consists of probability measures on $(\W, \mathcal{B}(\W), \gamma )$ that possess a density of the form $F (\omega) = \Phi(\omega_{t_1}, \cdots, \omega_{t_n} )$, for some $n \in \N$, $-\infty< t_1<t_2<\cdots<t_n<+\infty$ and some function $\Phi: (\R^{d})^{n} \rightarrow \R $ satisfying Lipschitz continuity for both $\Phi$ and $\nabla \Phi$, and $\Phi \geq \epsilon$ for some $\epsilon>0$. It is noteworthy that $\upsilon \in S$ only depends on finite moments in time, then the identity \eqref{relative entropy} holds as established in \cite[Theorem 7]{Lehec_Representation_2013}. We obtain the desired result.
\end{proof}

\section*{Acknowledgements}

Y. Liu is supported by the National Natural Science Foundation of China (No. 12231002) and Center for Statistical Science, PKU.

\printbibliography

\end{document}